\tikzset{bullet/.style={
shape = circle,fill = black, inner sep = 0pt, outer sep = 0pt, minimum size = 0.35em, line width = 0pt, draw=black!100}}
\tikzset{circle/.style={
shape = circle,fill = none, inner sep = 0pt, outer sep = 0pt, minimum size = 0.35em, line width = 1pt, draw=black!100}}
\tikzset{rectangle/.style={
shape = rectangle,fill = white, inner sep = 0pt, outer sep = 0pt, minimum size = 0.35em, line width = 0pt, draw=black!100}}
\tikzset{empty/.style={
shape = circle,fill = white, inner sep = 0pt, outer sep = 0pt, minimum size = 0.35em, line width = 0pt, draw=white!100}}
\tikzset{xmark/.style={
shape = x,fill = white, inner sep = 0pt, outer sep = 0pt, minimum size = 0em, line width = 0pt, draw=white!100}}
\tikzset{longrectangle/.style={
inner sep = 1em,
% The shape:
rectangle,
% The size:
minimum size=1em,
% The border:
very thick,
draw=black!100, % 50% red and 50% black,
% and that mixed with 50% white
% The filling:
%top color=white, % a shading that is white at the top...
%bottom color=red!50!black!20, % and something else at the bottom
% Font
%font=\itshape
}}
\tikzset{label distance=-0.15em}
\tikzset{font=\scriptsize}
\newtheorem{theorem}{Theorem}[section]
\newtheorem{lemma}[theorem]{Lemma}
\newtheorem{proposition}[theorem]{Proposition}
\newtheorem{corollary}[theorem]{Corollary}
\newtheorem*{corollarywon}{Corollary}
\newtheorem*{theoremknown}{Theorem}
\theoremstyle{definition}
\newtheorem{definition}[theorem]{Definition}
\newtheorem{remark}[theorem]{Remark}
\newtheorem{example}[theorem]{Example}
\numberwithin{equation}{section}
\def\sheaf#1{\ensuremath \mathcal#1}
\begin{document}

\title[Rational homology balls and antiflips]{Simple embeddings of rational homology balls and antiflips}

\author[H. Park]{Heesang Park}

\address{Department of Mathematics, Konkuk University, Seoul 05029, Republic of Korea \& Korea Institute for Advanced Study, Seoul 02455, Republic of Korea}

\email{HeesangPark@konkuk.ac.kr}

\author[D. Shin]{Dongsoo Shin}

\address{Department of Mathematics, Chungnam National University, Daejeon 34134 \& Korea Institute for Advanced Study, Seoul 02455, Republic of Korea}

\email{dsshin@cnu.ac.kr}

\author[Urz{\'u}a, Giancarlo]{Giancarlo Urz{\'u}a}

\address{Facultad de Matem{\'a}ticas, Pontificia Universidad Cat{\'o}lica de Chile, Santiago, Chile}

\email{urzua@mat.uc.cl}

\subjclass[2010]{57R40, 57R55, 14E30, 14B07}

\keywords{antiflip, Mori sequence, rational homology ball}

%\date{9 April 2019}

\begin{abstract}
Let $V$ be a regular neighborhood of a negative chain of $2$-spheres (i.e. exceptional divisor of a cyclic quotient singularity), and let $B_{p,q}$ be a rational homology ball which is smoothly embedded in $V$. Assume that the embedding is simple, i.e. the corresponding rational blow-up can be obtained by just a sequence of ordinary blow-ups from $V$. Then we show that this simple embedding comes from the semi-stable minimal model program (MMP) for $3$-dimensional complex algebraic varieties under certain mild conditions. That is, one can find all simply embedded $B_{p,q}$'s in $V$ via a finite sequence of antiflips applied to a trivial family over a disk. As applications, simple embeddings are impossible for chains of $2$-spheres with self-intersections equal to $-2$. We also show that there are (infinitely many) pairs of disjoint $B_{p,q}$'s smoothly embedded in regular neighborhoods of (almost all) negative chains of $2$-spheres. Along the way, we describe how MMP gives (infinitely many) pairs of disjoint rational homology balls $B_{p,q}$ embedded in blown-up rational homology balls $B_{n,a} \# \overline{\mathbb{CP}^2}$ (via certain divisorial contractions), and in the Milnor fibers of certain cyclic quotient surface singularities. This generalizes results in Khodorovskiy-2014~\cite{Khodorovskiy-2012,Khodorovskiy-2014}, H. Park-J. Park-D. Shin-2016~\cite{PPS-2016}, Owens-2017~\cite{Owens-2017} by means of a uniform point of view.
\end{abstract}

\maketitle

\tableofcontents

%------------------------------------------------------------------------------
\section{Introduction}

Let $0<q<p$ be coprime integers. A rational homology ball $B_{p,q}$ is the $4$-manifold obtained by attaching a $1$-handle and a single $2$-handle with framing $pq-1$ to $B^4$ by wrapping the attaching circle of the $2$-handle $p$-times around the $1$-handle with a $q/p$-twist. A rational homology ball $B_{p,q}$ may be also regarded as the Milnor fiber of the $\mathbb{Q}$-Gorenstein smoothing of a cyclic quotient surface singularity of type $\frac{1}{p^2}(1,pq-1)$ (i.e. a Wahl singularity). The lens space $L(p^2, pq-1)$ is the boundary of $B_{p,q}$. At the same time, the space $L(p^2,pq-1)$ is a boundary of a regular neighborhood $C_{p,q}$ of a linear chain of $2$-spheres $A_1, \dotsc, A_s$, whose dual graph is given by
\begin{equation}\label{equation:Cpq}
\begin{tikzpicture}
\node[bullet] (10) at (1,0) [label=above:{$-a_1$},label=below:{$A_1$}] {};
\node[bullet] (20) at (2,0) [label=above:{$-a_2$},label=below:{$A_2$}] {};

\node[empty] (250) at (2.5,0) [] {};
\node[empty] (30) at (3,0) [] {};

\node[bullet] (350) at (3.5,0) [label=above:{$-a_{s-1}$},label=below:{$A_{s-1}$}] {};
\node[bullet] (450) at (4.5,0) [label=above:{$-a_s$},label=below:{$A_s$}] {};

\draw [-] (10)--(20);
\draw [-] (20)--(250);
\draw [dotted] (20)--(350);
\draw [-] (30)--(350);
\draw [-] (350)--(450);
\end{tikzpicture}
\end{equation}
where $A_i \cdot A_i = - a_i \le -2$ with
\begin{equation*}
\frac{p^2}{pq-1} = a_1 - \cfrac{1}{a_2 - \cfrac{1}{\ddots - \cfrac{1}{a_s}}}
\end{equation*}
Hence one can \emph{rationally blow-up} $B_{p,q}$ which has been smoothly embedded in a smooth $4$-manifold $W$; that is, $\widetilde{W} = (W \setminus B_{p,q}) \cup_{L(p^2, pq-1)} C_{p,q}$.

A typical way to produce smooth embeddings of $B_{p,q}$'s is through the rational blow-down of a $C_{p,q}$ in a smooth $4$-manifold $X$. This  sometimes gives a $4$-manifold which is not trivially related to $X$. For example, from a rational elliptic fibration we can get Enriques  or Dolgachev surfaces, or from Lee-Park type of examples we can obtain surfaces of general type by applying rational blow-down to certain blow-ups of $\mathbb C \mathbb P^2$ (see e.g. Urz{\'u}a~\cite{Urzua-2016}). However, it is not easy to detect rational homology balls $B_{p,q}$ embedded in a given $4$-manifold $Z$ unless one knows a priori that $Z$ is obtained by performing a rational blow-down surgery.

Khodorovskiy~\cite{Khodorovskiy-2012,Khodorovskiy-2014} initiates the study of detecting $B_{p,q}$. The author defines \emph{simple} embeddings of $B_{p,q}$ in $V$ as the corresponding rational blow-up $W = (V \setminus B_{p,q}) \cup C_{p,q}$ is obtained by a sequence of ordinary blow-ups $\pi \colon W \to V$, that is, $W = V \sharp k \overline{\mathbb{CP}^2}$ for some $k \ge 1$. Using Kirby calculus, the author shows various instances of simple embeddings in regular neighborhoods of negative $2$-spheres.

\begin{theoremknown}[{Khodorovskiy~\cite[Theorem~1.2, 1.3]{Khodorovskiy-2014}}]\label{theoremknown:Khodorovskiy}
There is a rational homology ball $B_{p,1}$ simple embedded in a regular neighborhood $V_{-(p+1)}$ of a $2$-sphere with self-intersection number $-p-1$. In particular, for all odd $p > 1$, there exists a simple embedding of the rational homology balls $B_{p,1} \hookrightarrow V_{-4}$.
\end{theoremknown}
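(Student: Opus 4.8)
The plan is to establish the first assertion by an explicit blow-up construction followed by a Kirby-calculus identification, and then to deduce the statement about $V_{-4}$ for odd $p$ from it. First I would record the combinatorics of $C_{p,1}$: a Hirzebruch--Jung computation gives $\frac{p^{2}}{p-1}=[p+2,2,\dots,2]$ with exactly $p-2$ entries equal to $2$, so $C_{p,1}$ is a regular neighborhood of a linear chain $A_{1},\dots,A_{p-1}$ of $2$-spheres with $A_{1}^{2}=-(p+2)$ and $A_{i}^{2}=-2$ for $2\le i\le p-1$. By the definition of a simple embedding it then suffices to realize a copy of $C_{p,1}$ inside some blow-up $V_{-(p+1)}\,\#\,k\,\overline{\mathbb{CP}^{2}}$ in such a way that the rational blow-down along that copy returns $V_{-(p+1)}$ exactly.

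To build the configuration, let $E\subset V_{-(p+1)}$ be the zero section, $E^{2}=-(p+1)$. Blow up a point of $E$: its proper transform has square $-(p+2)$ and meets the exceptional $(-1)$-sphere $G_{1}$ once. Inductively, for $j=1,\dots,p-2$ blow up a point of the most recently created $(-1)$-sphere $G_{j}$ lying off the rest of the configuration. After these $p-1$ blow-ups one has a linear chain of $2$-spheres with squares $-(p+2),-2,\dots,-2,-1$ of length $p$; its initial segment of length $p-1$ is a copy of $A_{1},\dots,A_{p-1}$, hence has regular neighborhood $C_{p,1}$, and the complement of that neighborhood inside $V_{-(p+1)}\,\#\,(p-1)\,\overline{\mathbb{CP}^{2}}$ deformation retracts onto the disk obtained from the terminal $(-1)$-sphere by deleting a disk around its one intersection point with the chain, so it is a single $2$-handle $H$. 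Thus
\[
V_{-(p+1)}\,\#\,(p-1)\,\overline{\mathbb{CP}^{2}}\;=\;C_{p,1}\cup_{L(p^{2},p-1)}H ,
\]
and the theorem reduces to the identity $B_{p,1}\cup_{L(p^{2},p-1)}H\cong V_{-(p+1)}$, which is what the rational blow-down of $C_{p,1}$ produces.

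I would verify this identity by Kirby calculus: present the manifold on the left above as the linear plumbing on $[-(p+2),-2,\dots,-2,-1]$; carry out the rational blow-down of $C_{p,1}$ by replacing the chain of unknots with framings $-(p+2),-2,\dots,-2$ by the standard one-$1$-handle/one-$2$-handle diagram of $B_{p,1}$ (the $2$-handle wrapping $p$ times around the $1$-handle with a $1/p$ twist and having framing $p-1$), while keeping the terminal $(-1)$-framed $2$-handle attached where it was; then simplify by handle slides, a $1$-handle/$2$-handle cancellation, and a final blow-down, and check that a single $(-(p+1))$-framed unknot remains. Carrying this out uniformly in $p$, with all framings and linking numbers tracked as explicit functions of $p$, is the technical heart and the step I expect to be the main obstacle. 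An algebro-geometric alternative would be to realize the picture in a smooth rational surface, contract $A_{1},\dots,A_{p-1}$ to a Wahl singularity of type $\frac{1}{p^{2}}(1,p-1)$, note that the terminal $(-1)$-curve survives as a $(-1)$-curve through the singular point, and check that the $\mathbb{Q}$-Gorenstein smoothing of the resulting singular surface becomes $V_{-(p+1)}$ after contracting that $(-1)$-curve.

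For the final assertion, the case $p=3$ is exactly the above, since then $p+1=4$. For odd $p>3$ one cannot reuse it directly, because $V_{-4}$ embeds in no blow-up of $V_{-(p+1)}$; instead I would exhibit, by an analogous but longer explicit blow-up of $V_{-4}$ adapted to odd $p$, a copy of $C_{p,1}$ whose complement is again a controllable union of $2$-handles, and then run the same Kirby-calculus simplification to see that the rational blow-down equals $V_{-4}$. I expect this family of configurations to be governed by a uniform (or inductive) pattern that collapses, after blow-downs, to the base case $p=3$, the main difficulty being once more to control the handle moves uniformly in $p$.
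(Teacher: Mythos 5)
Your setup for the first claim is correct and is essentially Khodorovskiy's original route (the paper quotes this statement from \cite{Khodorovskiy-2014} without proof, and instead recovers and generalizes it later, in Corollary~\ref{corollary:V_-n} via Theorem~\ref{theorem:main-linear} and Proposition~\ref{proposition:simple-embedding}, by MMP). The continued fraction $p^{2}/(p-1)=[p+2,2,\dots,2]$ with $p-2$ twos, the chain $[-(p+2),-2,\dots,-2,-1]$ produced by $p-1$ blow-ups of $V_{-(p+1)}$, and the observation that the complement of that copy of $C_{p,1}$ is a single $2$-handle $H$ are all right, and they reduce the first claim to $B_{p,1}\cup_{L(p^{2},p-1)}H\cong V_{-(p+1)}$. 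But the argument stops exactly where the work begins: that identity is only announced as a Kirby-calculus computation to be carried out, and, more seriously, for the second claim ($B_{p,1}\hookrightarrow V_{-4}$ for odd $p>3$) you exhibit no configuration at all --- you assert that ``an analogous but longer explicit blow-up of $V_{-4}$ adapted to odd $p$'' should exist and should collapse to the base case $p=3$. That is a statement of intent, not a proof: the configurations for different odd $p$ are genuinely different (their size grows with $p$), they do not visibly reduce to one another by blow-downs, and producing the correct family is the actual content of Khodorovskiy's Theorem~1.3. So the first half is a correct skeleton with its computational core missing, and the second half has a genuine gap.

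For comparison, the paper's derivation of both claims in Corollary~\ref{corollary:V_-n} avoids all handle moves. The $(-(p+1))$-curve gives the usual initial flipping mk1A $[(p,1)]-1-[(1,1)]$, i.e.\ $[p+2,2,\dots,\overline{2}]$, whose flip has smooth central fiber $V_{-(p+1)}$ and general fiber containing $B_{p,1}$; that is the first claim. For $V_{-4}$ the initial mk1A is $[5,\overline{2}]$ with $\delta=2>1$, so the Mori sequence $\dotsb-[2m+3,\overline{2},2,\dots,2]-\dotsb-[5,\overline{2}]-\varnothing$ is infinite and yields $B_{2m+1,1}\hookrightarrow V_{-4}$ for every $m\ge 1$ uniformly --- precisely the family of configurations you were hoping to guess, generated by the Mori recursion rather than by an induction on Kirby diagrams. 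Note that the algebro-geometric alternative you mention in passing (contract the chain to the Wahl singularity $\frac{1}{p^{2}}(1,p-1)$, keep the $(-1)$-curve, and take the $\mathbb{Q}$-Gorenstein smoothing) is exactly the paper's mechanism; developing that one sentence, rather than the Kirby-calculus plan, is the shortest path to a complete proof.
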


\begin{theoremknown}[{Khodorovskiy~\cite[Theorem~1.3]{Khodorovskiy-2014}}]\label{theoremknown:Khodorovskiy-sharp}
For all even $p \ge 2$, there exists a simple embedding of the rational homology balls $B_{p,1} \hookrightarrow B_{2,1} \sharp \overline{\mathbb{CP}^2}$.
\end{theoremknown}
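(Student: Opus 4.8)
The plan is to derive this from the semistable minimal model program: realize $B_{p,1}$ as the Milnor fiber of the unique Wahl singularity on the central fiber of a one‑parameter $\mathbb Q$‑Gorenstein family built by antiflips from a trivial family, with the summand $\overline{\mathbb{CP}^2}$ produced by an auxiliary divisorial contraction.

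\emph{Algebraic reformulation.} The Wahl singularity of $B_{p,1}$ is $\frac{1}{p^2}(1,p-1)$, whose minimal resolution is the chain $[p+2,2,\dots,2]$ with $p-2$ twos; for $p=2$ this is the single $(-4)$‑curve, the resolution of the Wahl singularity $\frac14(1,1)$ of $B_{2,1}$. I would look for a flat projective family $\mathcal Z\to\mathbb D$ over a disk whose central fiber $Z_0$ is a W‑surface with exactly one Wahl singularity of type $\frac{1}{p^2}(1,p-1)$ and smooth general fiber $Z_t$, together with a divisorial contraction $c\colon\mathcal Z\to\mathcal W$ over $\mathbb D$ that contracts a single prime divisor meeting the singular locus and whose central fiber $W_0$ is a W‑surface with exactly one Wahl singularity of type $\frac14(1,1)$. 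From such a diagram the $\mathbb Q$‑Gorenstein smoothing glues $B_{p,1}$ into $Z_t$; the map $c$ exhibits $Z_t$ as $W_t$ modified by one blow‑up along the Milnor fiber $B_{2,1}$ of $(W_0,\mathrm{sing})$, so that a suitable codimension‑zero piece of $Z_t$ containing $B_{2,1}$ and the exceptional locus of $c$ is diffeomorphic to $B_{2,1}\#\overline{\mathbb{CP}^2}$, with $B_{p,1}$ sitting inside it. Since antiflips and the contraction $c$ become compositions of ordinary blow‑ups after the total spaces are resolved, the minimal resolution of $Z_0$ restricted to this piece is $(B_{2,1}\#\overline{\mathbb{CP}^2})$ blown up $p-1$ times; that is, the rational blow‑up of $B_{p,1}$ is $(B_{2,1}\#\overline{\mathbb{CP}^2})\#(p-1)\overline{\mathbb{CP}^2}$, so the embedding is simple.

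\emph{Building the family.} Starting from the trivial family of a rational surface, make a first antiflip creating a $\frac14(1,1)$ point equipped with a contraction $c$ of the above kind (this initial step also handles $p=2$), and then, for $j=2,\dots,p-1$, perform the antiflip which on the minimal resolution of the central fiber realizes the Koll{\'a}r--Shepherd-Barron mutation $[b_1,\dots,b_r]\mapsto[b_1+1,b_2,\dots,b_r,2]$, taking the Wahl chain from $[j+2,2,\dots,2]$ ($j-2$ twos) to $[j+3,2,\dots,2]$ ($j-1$ twos), the general fiber staying fixed. At each step one verifies that the flipping curve is contractible and the antiflip produces exactly the asserted Wahl singularity; that the $\mathbb Q$‑Gorenstein deformations are unobstructed, which for these rational central fibers with big anticanonical class follows from the vanishing of the relevant $T^2$; and, crucially, that the contraction $c$ onto the $\frac14(1,1)$‑family survives the mutation, i.e. the prime divisor it contracts can be carried through the whole tower in the position dictated by the reformulation above.

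\emph{Where the difficulty lies.} The real obstacle is this last compatibility, together with the uniform bookkeeping of the tower across all $p$. It is straightforward to build a family whose naive Milnor‑fiber reading already gives a simple embedding of $B_{p,1}$ into a smooth rational surface (or, with a carelessly placed contraction, a \emph{non‑simple} embedding into $B_{2,1}\#\overline{\mathbb{CP}^2}$, whose rational blow‑up comes out simply connected rather than with the required $\pi_1=\mathbb Z/2$). To get an honestly simple embedding with ambient $B_{2,1}\#\overline{\mathbb{CP}^2}$ one must arrange that the divisor contracted by $c$ truly meets the Wahl chain $C_{p,1}$ after resolution — so that contracting it replaces $\frac{1}{p^2}(1,p-1)$ by $\frac14(1,1)$ and the $\overline{\mathbb{CP}^2}$ is attached to $B_{2,1}$ in the way that keeps a $B_{2,1}$‑summand in the complement of the embedded ball. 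Showing that such a compatible contraction is available at every stage of the antiflip tower, and extracting from the continued‑fraction arithmetic of the mutations the congruence that governs when it is, is the heart of the proof; the same arithmetic should also explain the dichotomy with the companion theorem, namely why even $p$ is routed through $B_{2,1}\#\overline{\mathbb{CP}^2}$ while odd $p$ can instead be contracted directly onto the neighborhood $V_{-4}$ of a $(-4)$‑sphere.
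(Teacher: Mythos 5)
Your high-level framework is the right one and agrees with the paper's: the $\overline{\mathbb{CP}^2}$ summand does come from a \emph{divisorial} extremal neighborhood lying over the $\mathbb{Q}$-Gorenstein smoothing of the Wahl singularity $\frac{1}{4}(1,1)$, and $B_{p,1}$ is the Milnor fiber of the Wahl point $\frac{1}{p^2}(1,p-1)$ on the central fiber upstairs. But the construction you propose for reaching that central fiber does not work, and the issue you flag at the end as ``the heart of the proof'' is in fact the entire missing content. You build a tower of antiflips realizing the mutation $[j+2,2,\dots,2]\mapsto[j+3,2,\dots,2]$ for $j=2,\dots,p-1$ ``with the general fiber staying fixed.'' Antiflips do preserve the general fiber, so if such a tower existed with general fiber $B_{2,1}\#\overline{\mathbb{CP}^2}$ throughout, it would embed $B_{j,1}$ for \emph{every} $j$, odd as well as even; your construction is blind to the parity restriction in the statement, which already signals that the increment-by-one step cannot be realized in this family. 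Indeed it cannot: the extremal neighborhoods whose contraction is the $\mathbb{Q}$-Gorenstein smoothing of $\frac{1}{4}(1,1)$ form a single divisorial Mori sequence with $\delta=2$ and recursion $d(i+1)=2d(i)-d(i-1)$, $d(1)=2$, $d(2)=4$, hence $m_i=2i$, $a_i=1$ --- numerically $\dots-[8,\overline{2},2,2,2]-[6,\overline{2},2]-[4]$ as in Example~\ref{example:mori-sequence-divisorial}. Only even indices occur, and there is no antiflip joining $[j+2,2,\dots,2]$ to $[j+3,2,\dots,2]$ over this base. (Odd $p$ is governed by a different, \emph{flipping} family over the $(-4)$-curve, $\dots-[2m+3,\overline{2},2,\dots,2]-\dots-[5,\overline{2}]-\varnothing$, which is why odd $p$ lands in $V_{-4}$ rather than in $B_{2,1}\#\overline{\mathbb{CP}^2}$.)

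The paper's proof (Theorem~\ref{theorem:main-sharp} together with Proposition~\ref{proposition:simple-embedding-sharp}) therefore needs no tower at all: for $p=2m$ one takes the single divisorial mk2A $\mathbb{E}_m$ of that Mori sequence, with Wahl singularities $(2m,1)$ and $(2m+2,2m+1)$; by Proposition~\ref{proposition:divisorial-contraction-for-mk12A} its contraction induces the blow-down of a $(-1)$-curve between smooth fibers, so the general fiber is $B_{2,1}\#\overline{\mathbb{CP}^2}$ and it visibly contains $B_{2m,1}$ (in fact the disjoint pair $B_{2m,1}\sqcup B_{2m+2,1}$). Simplicity is checked by deforming $\mathbb{E}_m$ to a divisorial mk1A that keeps only the $(2m,1)$ point, so that the rational blow-up of $B_{2m,1}$ is the minimal resolution of the central fiber, i.e.\ the minimal resolution of $\frac{1}{4}(1,1)$ followed by ordinary blow-ups --- exactly Owens' notion of simplicity for such ambient spaces. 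To repair your write-up you would need to discard the mutation tower and the unspecified ``congruence'' and replace them with the Mori recursion computation above; as written, the decisive arithmetic input is absent and the step you do specify would not produce it.
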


Then in  PPS~\cite{PPS-2016}, the authors HP and DS with Jongil Park generalizes Khodorovskiy~\cite[Theorem~1.2]{Khodorovskiy-2014} as follows.

\begin{theoremknown}[{PPS~\cite[Theorem~4.1]{PPS-2016}}]\label{theoremknown:PPS}
Let $V$ be a plumbing $4$-manifold of the $\delta$-half linear chain corresponding to $p^2/(pq-1)$ with $1 \le q < p$. Then there is an embedded rational homology ball $B_{p,q}$ in $V$.
\end{theoremknown}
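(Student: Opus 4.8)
The plan is to recast a simple embedding geometrically and then construct it recursively along the structure of Wahl chains. First observe that giving a simple embedding $B_{p,q}\hookrightarrow V$ amounts to exhibiting a copy of the negative linear chain $C_{p,q}$ inside some blow-up $V\#k\overline{\mathbb{CP}^2}$ whose rational blow-down is $V$ again: from such a configuration, $V=(V\#k\overline{\mathbb{CP}^2}\setminus\nu(C_{p,q}))\cup B_{p,q}$ exhibits $B_{p,q}\subset V$ with rational blow-up $V\#k\overline{\mathbb{CP}^2}$, so the embedding is simple, and conversely a simple embedding produces such a configuration (necessarily with $k=s$, by an Euler characteristic count). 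In the algebro-geometric language of the rest of the paper it thus suffices to produce a normal projective surface with a single Wahl singularity of type $\frac{1}{p^2}(1,pq-1)$ whose minimal resolution is an iterated blow-up of a smooth surface containing $V$ as a regular neighborhood of a chain of rational curves, arranged so that the $\mathbb{Q}$-Gorenstein smoothing of the Wahl point, which realizes the rational blow-down of $C_{p,q}$, recovers $V$ with $B_{p,q}$ embedded inside.

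I would produce such a surface by induction along the two moves
\[
[b_1,\dots,b_r]\longmapsto[2,b_1,\dots,b_{r-1},b_r+1],\qquad [b_1,\dots,b_r]\longmapsto[b_1+1,b_2,\dots,b_r,2],
\]
which, starting from $[4]$, generate exactly the Wahl chains; the word $\delta$ in these moves is the datum encoding $p^2/(pq-1)$, and it is this same $\delta$ that determines the $\delta$-half linear chain whose plumbing is $V$. The base case is $[4]$, i.e.\ $p=2$, $q=1$, where the $\delta$-half chain is the single $(-3)$-vertex, so that $V=V_{-3}$ and $B_{2,1}\hookrightarrow V_{-3}$ is the $p=2$ instance of Theorem~\ref{theoremknown:Khodorovskiy}. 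For the inductive step one starts from the configuration realizing the embedding attached to $[a_1,\dots,a_s]$ and performs an explicit local modification near the Wahl point---a blow-up followed by a blow-down in the simplest cases, an antiflip in the sense of the rest of this paper in general---positioned according to the chosen move; this turns the exceptional chain $C_{p',q'}$ into $C_{p,q}$ (realizing the move on Wahl chains), while on the complement it turns the half-chain plumbing $V'$ into $V$ by the corresponding half-move (incrementing one framing and, for one of the two moves, attaching an extra $(-2)$-sphere). Iterating along $\delta$ gives the statement. The induction can instead be run entirely in Kirby calculus: start from the standard handle diagram of the plumbing of the $\delta$-half chain, perform the continued-fraction blow-ups dictated by $p^2/(pq-1)$, cancel the resulting $(-1)$-framed handles against the emerging chain, and locate a sub-handlebody diffeomorphic to $B_{p,q}$, checking along the way that reattaching $C_{p,q}$ reproduces the blown-up diagram.

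The main obstacle is the bookkeeping, not any single step: one must pin down the precise recursive description of the $\delta$-half chain under each of the two moves---which framing gets incremented, where the new $(-2)$-vertices are inserted---and, crucially, check that at every stage the $(-1)$-curves one introduces can be contracted after the rational blow-down, so that $V$ is recovered \emph{exactly}. This last point is what upgrades ``embedding'' to ``simple embedding'', and it is also where the general value of $q$ enters, since for Khodorovskiy's case $q=1$ the $\delta$-half chain is a single vertex and this combinatorics degenerates.
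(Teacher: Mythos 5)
This statement is quoted in the paper as a known result of PPS~\cite{PPS-2016}, but the paper effectively reproves it (and more) in Section~\ref{section:Rational-homology-balls}: by the continued-fraction identity $[b_q,\dotsc,b_1,1,a_1,\dotsc,a_p]=0$ (Corollary~\ref{corollray:[a]->[T]-[a]}) one blows up $V$ in a single explicit step to obtain the configuration $[b_1,\dotsc,b_s]-1-e_1-\dotsb-e_{t-1}$ with $p^2/(pq-1)=[b_1,\dotsc,b_s]$, contracts the Wahl chain, and takes the $\mathbb{Q}$-Gorenstein smoothing; the general fiber contains $B_{p,q}$, and it is identified with $V$ because the resulting mk1A admits a terminating sequence of flips ending in the trivial deformation of $V$ (Proposition~\ref{proposition:usual-flip}, Lemma~\ref{lemma:flip->delta-half}), and flips do not change the general fiber. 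Your opening reduction — exhibit $C_{p,q}$ inside $V\sharp k\overline{\mathbb{CP}^2}$ so that its rational blow-down returns $V$ — is the same starting point, but your replacement of the rest by an induction on the two Wahl-chain generating moves has two genuine gaps.

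First, the inductive step is not a local modification of the previous configuration. Passing from $(p',q')$ to $(p,q)$ changes not only the chain $C_{p',q'}$ but the ambient plumbing: the $\delta$-half chain of the new pair is a \emph{different} $4$-manifold $V$, and $V'\sharp k'\overline{\mathbb{CP}^2}$ is not a blow-up of $V$. Blowing up a sphere of the half-chain does increment its framing, but it leaves you inside a blow-up of $V'$ with an extra exceptional sphere attached to the half-chain in the wrong position, and appending a new $(-2)$-vertex to the half-chain is not achievable by blow-ups at all; so an embedding into $V'$ gives no purchase on the different manifold $V$. (What does admit an induction on the Wahl moves is the purely combinatorial construction of the blown-up configuration — but that is exactly the content of Corollary~\ref{corollray:[a]->[T]-[a]}, proved in one line from the duality identity, and it is not the hard part.) Second, and more seriously, you defer to ``bookkeeping'' the claim that the rational blow-down of $C_{p,q}\subset V\sharp k\overline{\mathbb{CP}^2}$ is $V$ itself; this is the actual content of the theorem. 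After the cut-and-paste there are no complex $(-1)$-curves left to ``contract'', and identifying the result with $V$ is precisely what the flip/MMP argument supplies (or, in Khodorovskiy's and Owens' treatments, a genuine Kirby-calculus computation that you only gesture at). Without one of these inputs the inductive step establishes the existence of an embedding into \emph{some} $4$-manifold with the homology of $V$, not into $V$.
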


Here the \emph{$\delta$-half linear chain} associated to $(p,q)$ is, roughly speaking, a half of the negative-definite plumbing graph associated to $C_{p,q}$ (see PPS~\cite{PPS-2016}). The main tools of PPS~\cite{PPS-2016} are techniques from explicit semi-stable minimal model program for $3$-dimensional complex algebraic varieties. Using the MMP for 3-folds, they also prove again the result above of Khodorovskiy~\cite[Theorem~1.3]{Khodorovskiy-2014}.

Recently Owens~\cite{Owens-2017} gives topological proofs for the results in Khodorovskiy~\cite{Khodorovskiy-2014} and PPS~\cite{PPS-2016} by using manipulations of knot with bands diagrams representing properly embedded surfaces in $B^4$, which also leads the following new embeddings.

\begin{theoremknown}[{Owens~\cite[Theorem~4]{Owens-2017}}]\label{theoremknown:Owens-sharp}
There is a simple embedding $B_{p^2,p-1} \hookrightarrow B_{p,1} \sharp \overline{\mathbb{CP}^2}$.
\end{theoremknown}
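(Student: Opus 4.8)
The plan is to obtain this from the semi-stable minimal model program for complex $3$-folds, by producing a degenerating family of surfaces whose central fiber carries the relevant Wahl singularity and whose general fiber is $B_{p,1}\sharp\overline{\mathbb{CP}^2}$. Precisely, the claim is equivalent to constructing a flat family $\mathcal{X}\to\Delta$ of normal surfaces such that: (i) $\mathcal{X}_t$ is smooth and diffeomorphic to $B_{p,1}\sharp\overline{\mathbb{CP}^2}$ for $t\ne 0$; (ii) $\mathcal{X}_0$ has a unique singular point, a Wahl singularity of type $\tfrac{1}{(p^2)^2}\bigl(1,\,p^2(p-1)-1\bigr)$; and (iii) the minimal resolution $\widehat{\mathcal{X}_0}$ is obtained from $B_{p,1}\sharp\overline{\mathbb{CP}^2}$ by a sequence of ordinary blow-ups. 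Indeed, if such an $\mathcal{X}$ exists, then the Milnor fiber $B_{p^2,p-1}$ of the $\mathbb{Q}$-Gorenstein smoothing of the Wahl singularity is smoothly embedded in $\mathcal{X}_t\cong B_{p,1}\sharp\overline{\mathbb{CP}^2}$, and its rational blow-up is $\bigl(\mathcal{X}_0\setminus\{\text{the Wahl point}\}\bigr)\cup C_{p^2,p-1}=\widehat{\mathcal{X}_0}$, which by (iii) is a blow-up of $B_{p,1}\sharp\overline{\mathbb{CP}^2}$; hence the embedding is simple.

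To build $\mathcal{X}$ I would run a Mori sequence starting from the trivial end. Begin with the $\mathbb{Q}$-Gorenstein smoothing $\mathcal{Z}\to\Delta$ of the Wahl singularity $\tfrac{1}{p^2}(1,p-1)$: it has general fiber $B_{p,1}$, exceptional chain $C_{p,1}=[\,p+2,2,\dots,2\,]$ (with $p-2$ twos), and is a relative minimal model. First perform one divisorial extraction over $\Delta$ that introduces a horizontal family of $(-1)$-curves in the fibers, so the general fiber becomes $B_{p,1}\sharp\overline{\mathbb{CP}^2}$; this divisorial step is the source of the $\sharp\overline{\mathbb{CP}^2}$, exactly as in the blown-up rational homology ball constructions of this paper. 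Then perform the sequence of antiflips over $\Delta$ recorded by the $T$-chain structure of the target, which is governed by the Wahl-chain identity
\[
C_{p^2,p-1}=L_2^{\,p}\,L_1^{\,p-2}\bigl(C_{2,1}\bigr),\qquad C_{2,1}=[\,4\,],
\]
where $L_1\colon[b_1,\dots,b_r]\mapsto[2,b_1,\dots,b_{r-1},b_r+1]$ and $L_2\colon[b_1,\dots,b_r]\mapsto[b_1+1,b_2,\dots,b_r,2]$ are the two elementary extensions of Wahl chains. For $p\ge3$ this unwinds to $C_{p^2,p-1}=[\,p+2,\underbrace{2,\dots,2}_{p-3},p+2,\underbrace{2,\dots,2}_{p}\,]$; for $p=2$ it reads $C_{4,1}=[\,6,2,2\,]$, and the statement is already contained in Khodorovskiy's embedding $B_{4,1}\hookrightarrow B_{2,1}\sharp\overline{\mathbb{CP}^2}$ for even parameters~\cite{Khodorovskiy-2014}. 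Each elementary move $L_i$ is realized by a single antiflip over $\Delta$, which leaves the general fiber unchanged up to diffeomorphism and refines the central fiber; after the full sequence the central fiber carries exactly the Wahl singularity of type $(p^2,p-1)$, and, unwinding the antiflips and the divisorial extraction into ordinary blow-ups, $\widehat{\mathcal{X}_0}$ is $B_{p,1}\sharp\overline{\mathbb{CP}^2}$ blown up, which is condition (iii).

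The step I expect to be the main obstacle is keeping the Mori sequence \emph{minimal} throughout: one has to position the horizontal $(-1)$-curve and choose each antiflip so that the configuration accumulating over the distinguished point of the central fiber is \emph{exactly} the negative chain $C_{p^2,p-1}$ (and not some non-contractible configuration), so that no spurious singularities or curves appear in the intermediate central fibers, and so that the remaining exceptional curves of $\widehat{\mathcal{X}_0}$ contract back precisely onto $B_{p,1}\sharp\overline{\mathbb{CP}^2}$. This is the bookkeeping controlled by the continued-fraction (zero-continued-fraction) calculus for antiflips and divisorial contractions set up earlier in the paper; once it is in place, the two topological identifications — that the general fiber is $B_{p,1}\sharp\overline{\mathbb{CP}^2}$, and that the rational blow-up of $B_{p^2,p-1}$ equals the blown-up surface $\widehat{\mathcal{X}_0}$ — follow formally.
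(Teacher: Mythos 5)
Your overall frame---produce a one-parameter family whose general fibre is $B_{p,1}\sharp\overline{\mathbb{CP}^2}$ (the $\overline{\mathbb{CP}^2}$ coming from a divisorial contraction to the $\mathbb{Q}$-Gorenstein smoothing of the $(p,1)$ Wahl singularity) and whose central fibre carries the Wahl singularity of type $\frac{1}{(p^2)^2}(1,p^2(p-1)-1)$---is exactly the frame of Theorem~\ref{theorem:main-sharp} with $(n,a)=(p,1)$, where $(m_2,a_2)=(p^2,p^2-p+1)$ and $B_{p^2,p^2-p+1}=B_{p^2,p-1}$. But the construction you propose for the family is the part that does not work, and it is precisely the step you defer as ``bookkeeping.'' Concretely: your claim that each elementary Wahl-chain extension $L_1,L_2$ ``is realized by a single antiflip over $\Delta$'' is not how the MMP calculus here operates. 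The members of a Mori sequence all lie over one fixed germ $(Q\in\mathcal{Y})$, share the same $\delta$, and their Wahl parameters evolve by the recursion $d(i+1)=\delta d(i)-d(i-1)$ of Proposition~\ref{proposition:flip-for-Mori-sequence}, not by single elementary extensions; and a chain of antiflips beginning from the family carrying $C_{2,1}=[4]$ lives over $\frac14(1,1)$ and keeps its general fibre equal to $B_{2,1}\sharp\overline{\mathbb{CP}^2}$ for every step, so it cannot yield $B_{p,1}\sharp\overline{\mathbb{CP}^2}$. A secondary issue: your condition (iii) is not Owens's notion of simplicity for targets $B_{p',q'}\sharp\overline{\mathbb{CP}^2}$; what must be checked is that rationally blowing up $B_{p^2,p-1}$ has the same effect as rationally blowing up the $B_{p,1}$ summand followed by ordinary blow-ups, i.e.\ $\widehat{\mathcal{X}_0}$ must be an ordinary blow-up of a neighbourhood of $C_{p,1}$, not of $B_{p,1}\sharp\overline{\mathbb{CP}^2}$ itself.

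The paper's construction needs no antiflip at all for this particular embedding. Starting from the minimal resolution chain $[a_1,\dots,a_s]$ of the $(p,1)$ Wahl singularity, Corollary~\ref{corollray:[a]->[T]-[a]} produces by ordinary blow-ups the configuration $[b_1,\dots,b_t]-1-[a_1,\dots,a_s]$ with $(p^2)^2/(p^2(p-1)-1)=[b_1,\dots,b_t]$; contracting the two Wahl chains gives a single mk2A with Wahl points of types $(p^2,p-1)$ and $(p,1)$, and one computes $\delta=p$ and $\delta m_1-m_2=0$, so this mk2A is \emph{initial divisorial}. Proposition~\ref{proposition:divisorial-contraction-for-mk12A} identifies the general fibre as $B_{p,1}\sharp\overline{\mathbb{CP}^2}$, and the two Wahl points give the disjoint Milnor fibres $B_{p,1}$ and $B_{p^2,p-1}$ inside it. Simplicity is Proposition~\ref{proposition:simple-embedding-sharp}: smoothing the $(p,1)$ point while keeping the $(p^2,p-1)$ point yields an mk1A whose central fibre's minimal resolution is an ordinary blow-up of $C_{p,1}$. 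The antiflip/Mori-sequence machinery is only needed for the infinitely many further pairs $(B_{m_i,a_i},B_{m_{i+1},a_{i+1}})$, and there it is governed by the Mori recursion, not by the $L_1,L_2$ moves. So the genuine gap is the explicit construction of the divisorial mk2A; your $L_1,L_2$-antiflip scheme does not supply it.
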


\begin{theoremknown}[{Owens~\cite[Theorem~5]{Owens-2017}}]\label{theoremknown:Owens-CP2bar}
Let $F(n)$ is the $n$-th Fibonacci number. Then there is a smooth embedding $B_{F(2n+2),F(2n)} \hookrightarrow \overline{\mathbb{CP}^2}$.
\end{theoremknown}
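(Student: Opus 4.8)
The plan is to obtain Owens's Theorem~\ref{theoremknown:Owens-CP2bar} from the divisorial-contraction mechanism already used in the paper to embed $B_{p,q}$ in blown-up rational homology balls $B_{n,a}\#\overline{\mathbb{CP}^2}$, pushed to the extreme case in which the ambient Wahl singularity is improved all the way to a smooth point. That is the degenerate ``$(n,a)=(1,0)$'' case: here $B_{1,0}=B^4$, so $B_{1,0}\#\overline{\mathbb{CP}^2}=\overline{\mathbb{CP}^2}$, which is also the regular neighborhood $V_{-1}$ of a $(-1)$-sphere with a $4$-handle attached. This is precisely the endpoint lying just outside the hypothesis $-a_i\le-2$ of the main theorem, which is why it must be handled by a separate but parallel argument.

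The first step is combinatorial. By Cassini's identity $F(2n)F(2n+2)=F(2n+1)^2-1$, the ball $B_{F(2n+2),F(2n)}$ is the Milnor fiber of the Wahl singularity $\tfrac{1}{F(2n+2)^2}\bigl(1,F(2n+2)F(2n)-1\bigr)$; a direct Hirzebruch--Jung continued-fraction computation gives its resolution chain, namely $C_{3,1}=[5,2]$ for $n=1$, $C_{8,3}=[3,5,3,2]$ for $n=2$, $C_{21,8}=[3,3,5,3,3,2]$ for $n=3$, and in general $C_{F(2n+2),F(2n)}$ is obtained from $C_{F(2n),F(2n-2)}$ by first applying the standard Wahl move $[b_1,\dots,b_r]\mapsto[2,b_1,\dots,b_{r-1},b_r+1]$ and then $[b_1,\dots,b_r]\mapsto[b_1+1,b_2,\dots,b_r,2]$. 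That composite is exactly one step of the Mori sequence attached to a single standard antiflip, so the whole family $\{B_{F(2n+2),F(2n)}\}_{n\ge1}$ is produced by iterating one antiflip starting from $[5,2]$.

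The second step runs the $3$-fold MMP. Take a trivial family over the disk $\Delta$ whose fibers are the Hirzebruch surface $\mathbb{F}_1$, whose negative section $\Sigma$ satisfies $\nu(\Sigma)\cong V_{-1}$, and apply the finite antiflip sequence of Step~1, all supported in an analytic neighborhood of $\Sigma$ in one fiber. The output is a $\mathbb{Q}$-Gorenstein one-parameter degeneration $\mathcal{X}\to\Delta$ whose central fiber has a single Wahl singularity of type $(F(2n+2),F(2n))$ and whose general fiber is unchanged, and the associated divisorial contraction improves that singularity to a smooth point. By the local-to-global principle for $\mathbb{Q}$-Gorenstein smoothings, $B_{F(2n+2),F(2n)}$ is embedded in the general fiber, and because the whole construction is localized at $\Sigma$ one wants to conclude that it in fact sits inside a neighborhood diffeomorphic to $V_{-1}$, hence inside $\overline{\mathbb{CP}^2}$.

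The main obstacle is exactly this last identification of the ambient $4$-manifold at the degenerate end of the MMP. One must verify that the antiflip sequence stays inside the admissible class (all intermediate $3$-folds have only terminal singularities and the maps are genuine antiflips) and, more delicately, that the localized copy of $B_{F(2n+2),F(2n)}$ really lands in a $V_{-1}$ rather than merely in $\mathbb{F}_1$ --- equivalently, that the rational blow-up of $\overline{\mathbb{CP}^2}$ along $B_{F(2n+2),F(2n)}$ is the negative-definite manifold $\#_{2n+1}\overline{\mathbb{CP}^2}$. Note that the $(-1)$-sphere carrying $B_{F(2n+2),F(2n)}$ cannot be holomorphic, since a blow-up of a complex surface does not contain the chain $C_{F(2n+2),F(2n)}$ in a neighborhood of a holomorphic $(-1)$-curve; so this identification must be made at the smooth $4$-manifold level. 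I would organize it by induction on $n$: the base case $n=1$ (i.e.\ $B_{3,1}\hookrightarrow\overline{\mathbb{CP}^2}$) is an explicit short antiflip computation, and the inductive step is a single standard antiflip, which preserves the diffeomorphism type of the carrying neighborhood and hence keeps $V_{-1}$ intact at every stage.
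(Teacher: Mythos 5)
First, a point of orientation: the paper does not prove this statement. It is quoted verbatim as a known theorem of Owens~\cite{Owens-2017}, who establishes it by manipulating knot-with-bands diagrams of properly embedded surfaces in $B^4$; the only MMP-related remark the paper makes about such embeddings is that they arise ``via a blown-up family'' in the sense of Urz\'ua~\cite{Urzua-2016a}, which is a genuinely different mechanism from antiflipping a trivial family near a fixed curve configuration. So there is no proof in the paper to compare yours against, and your proposal has to stand on its own. Your continued-fraction computations ($C_{3,1}=[5,2]$, $C_{8,3}=[3,5,3,2]$, $C_{21,8}=[3,3,5,3,3,2]$, and the two Wahl propagation moves) are correct.

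The proposal nevertheless has a fatal gap, and it is exactly where you wave your hands. The claim that the family $\{(F(2n+2),F(2n))\}_{n\ge 1}$ is realized by ``iterating one antiflip,'' i.e.\ as consecutive pairs of a single Mori sequence, is numerically impossible in the framework of Section~\ref{seciton:Mori-sequences}. The recursion $F(2n+2)=3F(2n)-F(2n-2)$ forces $\delta=3$ in the Mori recursion $d(i+1)=\delta d(i)-d(i-1)$, but an mk2A with Wahl indices $(m_i,m_{i+1})$ must also satisfy $\delta=m_{i+1}a_i+m_i a_{i+1}-m_i m_{i+1}$: for $(m_i,m_{i+1})=(1,3)$ this forces $a_{i+1}=3=m_{i+1}$, and for $(m_i,m_{i+1})=(3,8)$ it reads $8a_i+3a_{i+1}=27$, which has no admissible solution with $a_i\in\{1,2\}$. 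So no extremal neighborhood exists on which your inductive step could run, and the ``composite of two Wahl moves equals one Mori step'' identification is false. The geometric half fails for a parallel reason: every construction in the paper begins by contracting a chain with all self-intersections $\le -2$ to a cyclic quotient singularity, whereas a neighborhood of the $(-1)$-section of $\mathbb{F}_1$ contracts to a smooth point; the ``usual initial flipping mk1A associated to $\Gamma$'' does not exist for $\Gamma=[1]$, and the divisorial mechanism of Theorem~\ref{theorem:main-sharp} degenerates to $m_1=\delta=1$, $m_2=\delta^2=1$ --- i.e.\ to nothing --- when the contracted singularity is smooth. The ``main obstacle'' you flag (identifying the ambient manifold as $V_{-1}$) is actually the easy part, since any embedding into $\overline{\mathbb{CP}^2}$ misses a ball; the real problem is that the machinery you invoke produces no degeneration at all in this limiting case, which is precisely why the paper restricts to $e_i\ge 2$ and cites Owens for this statement rather than deriving it.
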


Owens~\cite{Owens-2017} found also new embeddings into $\mathbb{CP}^2$. These embeddings cannot be symplectic by the result of Evans-Smith~\cite{Evans-Smith-2018} because they proved that all symplectic embeddings into $\mathbb{CP}^2$ are the ones coming from algebraic geometry via the classification of Hacking-Prokhorov~\cite{Hacking-Prokhorov-2010}. We remark that these complex embeddings are also related to MMP via a blown-up family (see Urz{\'u}a~\cite[Theorem 3.1]{Urzua-2016a}).

In this paper we show that all simple embeddings in neighborhoods of linear chains of negative $2$-spheres come from the semi-stable MMP for $3$-dimensional complex algebraic varieties under certain mild conditions. Explicitly:

\begin{theorem}[Theorem~\ref{theorem:main-derived-from-MMP} for details]\label{maintheorem-simple-via-MMP}
If $V$ is a regular neighborhood of a linear chain of 2-spheres which is diffeomorphic to the minimal resolution of a germ of a cyclic quotient surface singularity,
then every plainly simple embedding (Definition~\ref{definition:plainly-simple}) of $B_{p,q}$ in $V$ comes from a finite sequence of antiflips applied to a trivial family over a disk.
\end{theorem}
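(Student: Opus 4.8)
The plan is to convert the purely topological data of a plainly simple embedding $B_{p,q} \hookrightarrow V$ into the birational geometry of a one-parameter family of surfaces, and then run the explicit semi-stable MMP backwards. First I would fix the smooth $4$-manifold $W = V \sharp k\overline{\mathbb{CP}^2}$ together with the blow-down $\pi\colon W \to V$ coming from the plainly simple hypothesis, and identify $W$ with the minimal resolution of a cyclic quotient singularity $(Y,0)$ obtained by blowing up the singularity resolved by $V$ (here the ``plainly simple'' condition — which I am assuming is defined so that the blow-ups of $\pi$ are centered on the chain in a controlled, chain-preserving way — is exactly what guarantees that $W$ is again such a resolution, not merely an abstract connected sum). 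The rational homology ball $B_{p,q} \subset V \subset W$ corresponds, on the algebraic side, to the Milnor fiber of the $\mathbb{Q}$-Gorenstein smoothing of a Wahl singularity $\frac{1}{p^2}(1,pq-1)$; its exceptional chain $C_{p,q}$ inside $W$ (after the blow-ups) is a configuration of curves that can be contracted to produce a surface $\overline{Y}$ with a single Wahl singularity.

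**Building the family and extracting the Mori sequence.** The second step is to realize both $Y$ and $\overline{Y}$ as the two special fibers of a single $3$-fold construction. Concretely, I would build a flat family $\mathcal{Y} \to \Delta$ over a disk whose general fiber is the Milnor fiber, whose central fiber on one side is (a partial resolution of) $Y$, and such that the Wahl singularity of $\overline{Y}$ together with the $\mathbb{Q}$-Gorenstein smoothing direction gives a terminal $3$-fold singularity on the total space. The key point is that the negative chain in $V$ — being the resolution graph of a cyclic quotient singularity — together with the extra $(-1)$-curves from $\pi$ forms precisely the kind of configuration to which the classification of extremal neighborhoods (Mori, Kollár–Mori, and the explicit antiflip/Mori-sequence machinery used in PPS~\cite{PPS-2016}) applies. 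I would then argue that running the relative MMP on $\mathcal{Y} \to \Delta$ — a sequence of divisorial contractions and flips over the base — must, step by step, peel off exactly the exceptional curves of $\pi$, because at each stage the only $K$-negative extremal ray available is the one contracting a $(-1)$-curve or executing a flip dictated by the continued-fraction combinatorics of $p^2/(pq-1)$. Reversing this sequence exhibits the original embedding as obtained from the trivial family $V \times \Delta$ (or rather the trivial family on the smoothing side) by a finite sequence of antiflips.

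**The main obstacle.** The hard part will be showing that the MMP on $\mathcal{Y} \to \Delta$ actually \emph{terminates at the trivial family} and does not stall at some intermediate Mori dream configuration that is not a product — equivalently, that the plainly simple hypothesis is strong enough to force the chain of antiflips to be the ``standard'' one coming from the continued fraction expansion. I expect this to require a careful bookkeeping argument: one assigns to each partial blow-up configuration a discrete invariant (e.g. the length of the chain, or the Mori–Zariski data of the central fiber, or the $\delta$ in the $\delta$-half chain of PPS~\cite{PPS-2016}) that strictly decreases under the inverse operations, and one checks that plainly simple embeddings are exactly those for which this invariant is well-defined and bounded — ruling out the ``wild'' blow-ups that would produce a non-algebraic or non-product limit. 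A secondary technical point is verifying that the $3$-fold total space one constructs has only the terminal (cyclic quotient and $cA_n$-type) singularities for which the antiflip classification is available; this should follow from the explicit local models of Wahl singularities and their $\mathbb{Q}$-Gorenstein smoothings, but it needs to be matched precisely against the blow-up centers prescribed by $\pi$. Once these two points are in place, the reverse MMP assembles into the claimed finite sequence of antiflips applied to the trivial family, which is the assertion of Theorem~\ref{theorem:main-derived-from-MMP}.
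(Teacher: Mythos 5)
Your overall strategy matches the paper's: contract $C_{p,q}$ in $W$ to a Wahl singularity, take the $\mathbb{Q}$-Gorenstein smoothing $\mathcal{X}\to\mathbb{D}$ (whose general fiber contains the embedded $B_{p,q}$ and is diffeomorphic to $V$), run the relative MMP over $\mathbb{D}$, and reverse the resulting flips into antiflips applied to the trivial family. However, you have correctly located but not resolved the central difficulty, and that is a genuine gap. You say the hard part is showing the MMP ``terminates at the trivial family and does not stall,'' and you propose some unspecified strictly decreasing discrete invariant. The paper's actual mechanism is different and quite specific: termination of the flip sequence itself is cheap (the index of the Wahl singularities strictly decreases at each flip, by HTU), but the real issue is showing that once $K$ of the central fiber becomes nef, that central fiber is already \emph{smooth and equal to} $V_{\mathbb{C}}$. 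The paper does this with a Milnor number argument: the nef model $X^{+}$ maps to the cyclic quotient singularity $V'$ as an M-resolution, its $\mathbb{Q}$-Gorenstein smoothing blows down to a smoothing of $V'$ with the same Milnor number, and since the general fiber is diffeomorphic to $V$ that Milnor number equals the number of $2$-spheres in the chain $B$ --- which is precisely the Milnor number of the Artin component, the maximal one among all components by Wahl's dimension formula. Hence the smoothing lies in the Artin component and $X^{+}$ is smooth. Without this (or an equivalent) argument, your proof does not rule out the MMP ending at a nontrivial M-resolution, i.e.\ at a nonproduct family whose reversal would not be a sequence of antiflips from the trivial family.

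Two further points you gloss over. First, each $K$-negative extremal ray encountered must be of \emph{flipping} (not divisorial) type; the paper gets this from the fact that a divisorial contraction would blow down a $(-1)$-curve in the general fiber, contradicting that the general fiber is diffeomorphic to the minimal $V$ --- this same minimality is also what produces the initial $(-1)$-curve meeting $C_{p,q}$ in one point, which seeds the whole process. Your claim that ``the only $K$-negative extremal ray available is the one dictated by the continued-fraction combinatorics'' is neither needed nor justified; one only needs flipping type, not uniqueness. Second, the relative MMP is run for proper families, so the paper first compactifies $V_{\mathbb{C}}$ inside a blown-up Hirzebruch surface and checks the vanishing of the local-to-global obstruction ($H^2(\widehat{X},\sheaf{T}_{\widehat{X}})=0$) to extend the smoothing while preserving the boundary curves; your proposal omits this step entirely.
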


We remark that for a one parameter $\mathbb Q$-Gorenstein smoothing of a complex projective surface with only Wahl singularities, we have that if the general fiber is birational to the special fiber, then the $\mathbb Q$-Gorenstein smoothing comes from a smooth deformation after applying MMP (see Urz{\'u}a~\cite[Section 3]{Urzua-2016a}). In this ``birational $\mathbb Q$-Gorenstein smoothing" situation all the corresponding embeddings of $B_{p,q}$'s are simple.

\begin{corollarywon}[Corollary~\ref{corollary:No-in-An}] \label{coro}
There are no plainly simple embedded rational homology balls in a regular neighborhood of a linear chain $\Gamma$ of consisting of only $2$-spheres with self-intersections equal to $-2$.
\end{corollarywon}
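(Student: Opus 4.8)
The plan is to deduce this directly from the Main Theorem (Theorem~\ref{maintheorem-simple-via-MMP}; see also Theorem~\ref{theorem:main-derived-from-MMP}) together with the fact that a chain of $(-2)$-curves resolves a \emph{canonical} (Du Val) singularity. Suppose, for contradiction, that $B_{p,q}$ is plainly simply embedded in $V$, where $V$ is a regular neighborhood of a linear chain $\Gamma$ of $2$-spheres all of self-intersection $-2$. Since a simple embedding requires at least one blow-up ($W = V \sharp k\,\overline{\mathbb{CP}^2}$ with $k\ge 1$), we may assume $p>1$ (otherwise $B_{1,0}=B^4$). Now $V$ is diffeomorphic to the minimal resolution of the cyclic quotient singularity $\tfrac{1}{n+1}(1,n)$ of type $A_n$, where $n$ is the length of $\Gamma$, so the hypothesis of Theorem~\ref{theorem:main-derived-from-MMP} is met and the embedding must arise from a finite, and necessarily nonempty, sequence of antiflips applied to a trivial family $V\times\Delta\to\Delta$, carried out relative over the singularity germ (so that every modification is supported over the chain $\Gamma$).

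The obstruction is that $K_V$ is numerically trivial on the compact curves of $V$. Indeed, each component $E_i$ of $\Gamma$ is a smooth rational curve with $E_i^2=-2$, so $K_V\cdot E_i=-2-E_i^2=0$ by adjunction; and since every compact curve in the minimal resolution of the $A_n$ germ is supported on $\Gamma$, we get $K_V\cdot C=0$ for all compact $C\subset V$. Equivalently, $K_{V\times\Delta/\Delta}$ has zero intersection number with every vertical curve contained in the central fiber. This is exactly the reflection of the fact that $A_n$ is a canonical singularity.

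Such triviality of $K$ is incompatible with performing even a single antiflip. In the semi-stable MMP over $\Delta$ for these $\mathbb{Q}$-Gorenstein families the flipping and flipped curves are vertical, because the general fiber stays fixed throughout (here it is $V$ itself, already a minimal model). An antiflip — the inverse of a flip — can be applied to such a family only when its central fiber contains a vertical curve $C$ with $K\cdot C>0$, namely the $K$-positive ``flipped'' curve that the inverse flip is to contract by a small morphism. Applied to the trivial family $V\times\Delta$ this would require a compact curve $C\subset V$ with $K_V\cdot C>0$, which the previous paragraph forbids. Hence the antiflip sequence furnished by Theorem~\ref{theorem:main-derived-from-MMP} is empty; but then $W=V$ and $k=0$, contradicting $k\ge 1$. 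Therefore no plainly simple embedded rational homology ball exists in $V$, which is Corollary~\ref{corollary:No-in-An}.

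The step that requires the most care is the first one: extracting from Theorem~\ref{theorem:main-derived-from-MMP} a trivial family whose central fiber is (a neighborhood of) $V$ and whose entire antiflip sequence is relative over the $A_n$ germ, so that the only curves ever available to antiflip are those lying over $\Gamma$, where $K$ is trivial. This localization is precisely what the hypothesis \emph{plainly simple} (Definition~\ref{definition:plainly-simple}) is meant to guarantee, and the rest of the argument is then immediate. As a consistency check the phenomenon is sharp: for a single $(-a)$-curve with $a\ge 3$ the neighborhood $V_{-a}$ is the minimal resolution of the \emph{non-canonical} cyclic quotient singularity $\tfrac1a(1,1)$, whose exceptional curve $E$ satisfies $K_{V_{-a}}\cdot E=a-2>0$, so an antiflip can indeed be started — in agreement with the positive simple embeddings of Theorem~\ref{theoremknown:Khodorovskiy}. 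One may also note the purely combinatorial shadow of the same obstruction: no Wahl chain consists only of $(-2)$-curves, since every such chain is obtained from $[4]$ by the standard T-singularity moves and therefore always contains an entry $\ge 3$.
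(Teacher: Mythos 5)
Your proof is correct and is essentially the paper's argument: the paper looks at the last flip and observes that its target would have to be an extremal $P$-resolution whose exceptional curve is a $(-2)$-curve, which is impossible because $K$ is trivial on it, while you phrase the identical obstruction as the impossibility of starting the first antiflip on the trivial family since every compact curve $C$ in $V$ satisfies $K_V\cdot C=0$. The two formulations are the same step read in opposite directions, so no further comparison is needed.
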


So in principle one may find all plainly simple embedded $B_{p,q}$ in $V$ if one can handle the combinatorics of all possible mk1A's or mk2A's anti-flips (see Section~\ref{seciton:Mori-sequences}) for a given trivial family with $V$ as its central fiber, which ends up in a $C_{p,q}$ rational blow-down of a blow-up of $V$. But describing this explicitly is a complicated combinatorial problem in general.

However, we generalize theorems of Khodorovskiy~\cite{Khodorovskiy-2014}, PPS~\cite{PPS-2016}, Owens~\cite{Owens-2017} above by means of MMP in the following way. Let $V$ be a regular neighborhood of a linear chain of $2$-spheres $E_1 \cup \dotsb \cup E_t$ whose dual graph $\Gamma$ is given by
\begin{equation*}
\begin{tikzpicture}[scale=0.75]
\node[bullet] (20) at (2,0) [label=above:{$-e_1$},label=below:{$E_1$}] {};

\node[empty] (250) at (2.5,0) [] {};
\node[empty] (30) at (3,0) [] {};

\node[bullet] (350) at (3.5,0) [label=above:{$-e_t$},label=below:{$E_t$}] {};
%\node[bullet] (450) at (4.5,0) [label=above:{$-b$},label=below:{$D$}] {};

\draw [-] (20)--(250);
\draw [dotted] (20)--(350);
\draw [-] (30)--(350);
%\draw [-] (350)--(450);

\end{tikzpicture}
\end{equation*}
where $E_i \cdot E_i = - e_i \le -2$ for all $i=1,\dotsc,t$. We prove the following.

\begin{theorem}[Theorem~\ref{theorem:main-linear}, Proposition~\ref{proposition:simple-embedding}]\label{maintheorem-existence}
If $e_t \ge 3$, then there are pairs of disjoint $B_{p,q}$'s for various $p, q$ smoothly embedded in $V$. Furthermore, there are infinitely many such pairs unless $e_1=\dotsb=e_{t-1}=2$ and $e_t=3$. All the embeddings are simple.
\end{theorem}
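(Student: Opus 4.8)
The plan is to deduce the whole statement from the three-fold semistable MMP machinery of Section~\ref{seciton:Mori-sequences}, so that the simplicity assertion comes essentially for free. Since $V$ is a negative-definite linear plumbing it is the minimal resolution of a cyclic quotient singularity $\frac1n(1,a)$ with $n/a=[e_1,\dots,e_t]$; I take the trivial family $V\times\mathbb D\to\mathbb D$, with central fibre $V$, as the starting object. I would then construct, for each such chain with $e_t\ge 3$, a Mori sequence of the type classified in Section~\ref{seciton:Mori-sequences} (ordinary blow-ups along curves in the central fibre, followed by mk1A/mk2A antiflips and divisorial contractions) that transforms $V\times\mathbb D$ into a $\mathbb Q$-Gorenstein family $\mathcal X\to\mathbb D$ whose central fibre $\mathcal X_0$ carries exactly two Wahl singularities, of types $\frac1{p_1^2}(1,p_1q_1-1)$ and $\frac1{p_2^2}(1,p_2q_2-1)$, at two distinct points. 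Since these operations are isomorphisms away from loci contained in special fibres, the general fibre is preserved, so $\mathcal X_t\cong V$; and near each of the two Wahl points the nearby fibre is the Milnor fibre of the $\mathbb Q$-Gorenstein smoothing, namely $B_{p_i,q_i}$, glued in along the disjoint lens-space boundaries $L(p_i^2,p_iq_i-1)$. Reading off the nearby-fibre description then exhibits a disjoint pair $B_{p_1,q_1}\sqcup B_{p_2,q_2}\hookrightarrow V$. Finally, because $\mathcal X\to\mathbb D$ is built from the \emph{trivial} (in particular smooth) family $V\times\mathbb D$, its general fibre is birational to its special fibre, which puts us in the ``birational $\mathbb Q$-Gorenstein'' situation of \cite[\S3]{Urzua-2016a} (cf.\ the remark following Theorem~\ref{maintheorem-simple-via-MMP}); this is exactly what forces both embeddings to be simple, and is what Proposition~\ref{proposition:simple-embedding} records.

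I would build the Mori sequence inductively along the chain, using $e_t\ge 3$ to get it started. After ordinary blow-ups adapted to the end curve $E_t$ --- which are available precisely because $e_t\ge 3$ --- the relative MMP over $\mathbb D$ begins with a small extremal contraction, and a run of mk1A antiflips then absorbs the chain one curve at a time ($E_{t-1}$, then $E_{t-2}$, \dots) into a growing Wahl singularity, exactly as in the $\delta$-half-linear-chain construction of \cite{PPS-2016}. The new ingredient is a choice: at a step of one's choosing the outcome can be arranged to have \emph{two} Wahl singular points (an extremal P-resolution with two Wahl singularities --- where the mk2A case enters), so that the part of the chain not yet absorbed into the first singularity is turned into a second one at a separate point, and one reads $(p_1,q_1)$ and $(p_2,q_2)$ off the two sub-continued-fractions cut out by this pivot. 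Varying the pivot position and letting the sequence run longer produces genuinely different pairs with $p_1+p_2\to\infty$, which gives the ``infinitely many'' clause. The exceptional chain $e_1=\dots=e_{t-1}=2,\ e_t=3$ (and its reverse) is the borderline case in which the single terminal $(-3)$ leaves no room: any attempted pivot other than the forced one either violates the T-singularity condition or fails to close up as a Mori sequence over $\mathbb D$, so a direct continued-fraction check shows that only finitely many pairs occur --- and isolating this case is the combinatorial heart of the ``infinitely many'' statement.

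The step I expect to be the main obstacle is the verification that the proposed chain of birational modifications is an honest Mori sequence: that each intermediate step really is a bona fide mk1A (or mk2A) antiflip with the right $K$-negativity and with only Wahl singularities on the central fibre, and that the whole sequence terminates in a family projective over $\mathbb D$. This amounts to matching the construction against the classification of three-fold extremal neighbourhoods together with the continued-fraction arithmetic that governs which chains can be absorbed; the hypothesis $e_t\ge 3$ is exactly what legalizes the first step, after which legality propagates. Once this bookkeeping is in place --- which is what Theorem~\ref{theorem:main-linear} carries out --- Proposition~\ref{proposition:simple-embedding} supplies the simplicity, and Theorem~\ref{maintheorem-existence} follows.
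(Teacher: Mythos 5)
Your overall framework is the right one (antiflips/Mori sequences over a disk, with the general fibre preserved by flips), but the central mechanism you propose for producing the pairs --- and especially for producing \emph{infinitely many} of them --- is not the one that works, and as stated it has a genuine gap. You suggest choosing a ``pivot'' along the chain, splitting $[e_1,\dotsc,e_t]$ into two sub-continued-fractions, reading $(p_1,q_1)$ and $(p_2,q_2)$ off the two halves, and varying the pivot to get infinitely many pairs. A linear chain has only finitely many pivot positions, and the pairs produced by the actual construction are not sub-continued-fractions of $\Gamma$ at all: they grow without bound. The paper's engine is different. One forms the single \emph{usual initial flipping mk1A} associated to $\Gamma$ by setting $n/a=[e_1,\dotsc,e_t-1]$ (this is where $e_t\ge 3$ enters), blowing up as in Corollary~\ref{corollray:[a]->[T]-[a]} to create the Wahl chain $[b_1,\dotsc,b_s]$ with $n^2/(na-1)=[b_1,\dotsc,b_s]$ attached by a $(-1)$-curve to $[e_1,\dotsc,e_{t-1}]$, and contracting. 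Lemma~\ref{lemma:flip->delta-half} shows its flips terminate in the smooth family with central fibre $V$, so its general fibre is diffeomorphic to $V$. Then the entire Mori sequence of flipping mk2A's $\mathbb{E}_i$ over the same $(Q\in\mathcal{Y})$ shares that flip, hence the same general fibre $V$, and each $\mathbb{E}_i$ carries \emph{two} Wahl singularities $(m_i,a_i)$, $(m_{i+1},a_{i+1})$ given by the Mori recursion $d(i+1)=\delta d(i)-d(i-1)$ --- that is the disjoint pair $B_{m_i,a_i}\sqcup B_{m_{i+1},a_{i+1}}\hookrightarrow V$. The infinitude is purely the statement that the Mori sequence is infinite when $\delta>1$.

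This also explains why your treatment of the exceptional case is off. The correct dichotomy is Lemma~\ref{lemma:almost-all-delta>1}: for the usual initial mk1A one has $\delta=n-a$, so $\delta=1$ exactly when $[e_1,\dotsc,e_t-1]=[2,\dotsc,2]$, i.e.\ when $\Gamma$ is $[2,\dotsc,2,3]$; in that case the Mori sequence is finite by definition, and otherwise it is infinite. There is no ``T-singularity obstruction to other pivots'' to check. Your worry about verifying that each step is an honest antiflip is legitimate, but the paper sidesteps it entirely by invoking the HTU classification: the Mori sequence attached to an initial flipping mk2A consists of bona fide flipping extremal neighbourhoods by Proposition~\ref{proposition:flip-for-Mori-sequence}, so no step-by-step legality check is needed. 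Your argument for simplicity (general fibre diffeomorphic to $V$, rational blow-up obtained from $V$ by ordinary blow-ups) is essentially the paper's Proposition~\ref{proposition:simple-embedding} and is fine once the correct construction is in place.
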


Here $p$ and $q$ are explicitly determined by numerical data of the graph $\Gamma$; See Theorem~\ref{theorem:main-linear} for details. In particular, we show that there are infinitely many pairs of disjoint $B_{p,q}$'s in $V_{-n}$ for any $n \ge 4$ (Corollary~\ref{corollary:V_-n}).

Another simple consequences of the MMP procedure give the following two theorems.

\begin{theorem}[cf.~Theorem~\ref{theorem:main-sharp}, Proposition~\ref{proposition:simple-embedding-sharp}]\label{tmaintheorem-Bsharp}
For any integers $n, a$ with $n > a \ge 1$ and $(n,a)=1$, there are infinitely many pairs of disjoint rational homology balls smoothly embedded in $B_{n,a} \sharp \overline{\mathbb{CP}^2}$, where one of them is $(B_{n^2,na-1}, B_{n,a})$. All of the embeddings except $B_{n,a} \hookrightarrow B_{n,a} \sharp \overline{\mathbb{CP}^2}$ are proved to be simple.
\end{theorem}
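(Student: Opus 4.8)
The plan is to deduce the theorem from the existence machinery behind Theorem~\ref{maintheorem-existence}, applied to a $\mathbb Q$-Gorenstein $3$-fold degeneration whose generic fiber is forced to be $B_{n,a}\sharp\overline{\mathbb{CP}^2}$. First I would start from the $\mathbb Q$-Gorenstein smoothing $\mathcal Y\to\Delta$ of the germ of the Wahl singularity $\frac{1}{n^2}(1,na-1)$, whose generic fiber is the rational homology ball $B_{n,a}$ by Koll\'ar--Shepherd-Barron. Then I would perform a suitable divisorial extraction over the Wahl point of the special fiber, i.e. the inverse of a divisorial contraction in the $3$-fold MMP, obtaining a family $\mathcal Y'\to\Delta$ whose generic fiber is $B_{n,a}\sharp\overline{\mathbb{CP}^2}$ and whose special fiber carries a Wahl singularity of type $\frac{1}{n^2}(1,na-1)$ together with one exceptional $(-1)$-curve near it. This already exhibits the embedding $B_{n,a}\hookrightarrow B_{n,a}\sharp\overline{\mathbb{CP}^2}$ as the one read off from the special fiber of $\mathcal Y'$ \emph{before} any further Mori move, and it is this embedding that will not be shown to be simple.

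Next I would run the antiflip procedure of Section~\ref{seciton:Mori-sequences} on $\mathcal Y'$: each mk1A or mk2A antiflip supported over the special fiber leaves the generic fiber $B_{n,a}\sharp\overline{\mathbb{CP}^2}$ unchanged, while replacing a flipping configuration in the special fiber by its antiflipped version, in general creating new Wahl singularities. For each admissible Mori sequence this yields a special fiber carrying two disjoint Wahl singularities, hence, by Koll\'ar--Shepherd-Barron, a pair of disjointly embedded rational homology balls in $B_{n,a}\sharp\overline{\mathbb{CP}^2}$. Running the same continued-fraction bookkeeping as in the proof of Theorem~\ref{maintheorem-existence}, I would check that the shortest admissible sequence produces the two Wahl singularities $\tfrac{1}{(n^2)^2}\bigl(1,n^2(na-1)-1\bigr)$ and $\tfrac{1}{n^2}(1,na-1)$, i.e. the pair $(B_{n^2,na-1},B_{n,a})$; for $a=1$ this recovers the embedding $B_{n^2,n-1}\hookrightarrow B_{n,1}\sharp\overline{\mathbb{CP}^2}$ of Owens~\cite[Theorem~4]{Owens-2017}. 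Since the linear chain governing this family is not the degenerate one excluded in Theorem~\ref{maintheorem-existence} --- which one verifies directly from the continued fraction of $\tfrac{n^2}{na-1}$ --- there are admissible sequences of arbitrary length with the index $p$ unbounded, so one obtains infinitely many distinct pairs.

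For the simpleness assertion I would run the MMP on each of these families and invoke the criterion of Theorem~\ref{maintheorem-simple-via-MMP} together with Proposition~\ref{proposition:simple-embedding-sharp}: a $B_{p,q}$ coming from a Wahl point that is created by an antiflip in the run corresponds to a rational blow-up that is a composition of ordinary blow-ups of $B_{n,a}\sharp\overline{\mathbb{CP}^2}$, hence is simple; the only Wahl point not created this way is the $\tfrac{1}{n^2}(1,na-1)$ that persists to the relative minimal model of the family, and the embedding it yields is exactly the $B_{n,a}\hookrightarrow B_{n,a}\sharp\overline{\mathbb{CP}^2}$ excepted in the statement.

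I expect the main obstacle to be the bookkeeping in the second step: pinning down, for each admissible Mori sequence, the precise types of the two Wahl singularities in the special fiber, and confirming that the generic fiber is genuinely $B_{n,a}\sharp\overline{\mathbb{CP}^2}$ --- a smooth $4$-manifold statement that has to be extracted from the birational geometry, using that rationally blowing down the relevant $C_{p,q}$ inside a blow-up of a minimal resolution returns the corresponding Milnor fiber --- together with checking that the simpleness criterion of Proposition~\ref{proposition:simple-embedding-sharp} holds for every embedding produced except the single exception. Once this combinatorial core is in place, the existence of infinitely many pairs and the identification of the distinguished pair $(B_{n^2,na-1},B_{n,a})$ are formal.
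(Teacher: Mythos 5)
Your proposal follows the same basic strategy as the paper's proof of Theorem~\ref{theorem:main-sharp} and Proposition~\ref{proposition:simple-embedding-sharp}: start from the $\mathbb{Q}$-Gorenstein smoothing of the Wahl singularity $\frac{1}{n^2}(1,na-1)$ (general fiber $B_{n,a}$), extract over it, use a Mori sequence to produce infinitely many central fibers with two disjoint Wahl singularities, and prove simpleness by deforming each mk2A to an mk1A whose rational blow-up is the minimal resolution of $(Q\in Y)$ followed by ordinary blow-ups. The one place where your mechanism diverges from the paper's, and where I would push back, is the phrase ``run the antiflip procedure on $\mathcal{Y}'$.'' The relevant Mori sequence here is the \emph{divisorial} one: the $\mathbb{E}_i$ are pairwise distinct divisorial extremal neighborhoods, each contracting to the \emph{same} germ $(Q\in\mathcal{Y})$; consecutive members are not obtained from one another by antiflips, and the identification of every general fiber with $B_{n,a}\sharp\overline{\mathbb{CP}^2}$ does not come from invariance of the general fiber under flips but from Proposition~\ref{proposition:divisorial-contraction-for-mk12A} (the divisorial contraction induces the blow-down of a single $(-1)$-curve between smooth fibers, applied separately to each $\mathbb{E}_i$). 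Concretely, the paper builds $\mathbb{E}_1$ by blowing up the minimal resolution $[a_1,\dotsc,a_s]$ of $n^2/(na-1)$ as in Corollary~\ref{corollray:[a]->[T]-[a]} to get the chain $[b_1,\dotsc,b_t]-1-[a_1,\dotsc,a_s]$ with $(n^2)^2/(n^2(na-1)-1)=[b_1,\dotsc,b_t]$, so the initial member already carries \emph{both} Wahl singularities $(n,a)$ and $(n^2,na-1)$ and yields the distinguished pair directly, rather than your two-stage ``one Wahl point plus a $(-1)$-curve, then antiflip.'' Likewise, infinitude is not checked against the excluded chain of Theorem~\ref{theorem:main-linear} (that criterion belongs to the flipping families of Section~\ref{section:Rational-homology-balls}); one simply computes $\delta=n>1$ for this divisorial family. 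None of this invalidates your outline --- the objects you would end up manipulating are the same --- but as written the justification that the ambient $4$-manifold is $B_{n,a}\sharp\overline{\mathbb{CP}^2}$ for every member of the sequence rests on the wrong lemma, and you would need to replace the antiflip bookkeeping by the divisorial Mori recursion to make the argument close.
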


\begin{theorem}[cf.~Theorem~\ref{theorem:main-2-handlebody}]\label{maintheorem-handlebody}
Let $(Q \in Y)$ be a cyclic quotient surface singularity which admits an extremal P-resolution $f^+ \colon X^+ \to Y$. Then there are infinitely many pairs of disjoint rational homology balls smoothly embedded in the Milnor fiber of $(Q \in Y)$ associated to the P-resolution $X^+ \to Y$ and all of the embeddings are non-simple.
\end{theorem}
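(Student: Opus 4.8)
\emph{Strategy.} The plan is to exhibit all of these embeddings through the three–dimensional semistable MMP, exactly as in Theorem~\ref{theorem:main-2-handlebody}, and then to read off non-simplicity from the positivity of $K_{X^+}$ that defines an extremal P-resolution.

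\emph{Setup.} I would first choose a $\mathbb{Q}$–Gorenstein smoothing of $X^+$ over a disk $\Delta$, that is, a normal three-fold $\mathcal{W}^+\to\Delta$ with central fiber $X^+$; it exists and, for an extremal P-resolution, is unique, since $X^+$ has only Wahl singularities and the relevant deformations are unobstructed. Contracting the central $\mathbb{P}^1\subset X^+$ relatively over $\Delta$ yields a birational morphism $g\colon\mathcal{W}^+\to\mathcal{W}$ onto a $\mathbb{Q}$–Gorenstein smoothing $\mathcal{W}\to\Delta$ of $(Q\in Y)$; since by adjunction $K_{\mathcal{W}^+}\cdot\mathbb{P}^1=K_{X^+}\cdot\mathbb{P}^1>0$, the morphism $g$ is small with $K_{\mathcal{W}^+}$ $g$-ample, hence an isomorphism over $\Delta^{*}$. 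Thus the general fiber of $\mathcal{W}$ is, by definition, the Milnor fiber $M$ of $(Q\in Y)$ associated to $X^+$, and the general fiber of $\mathcal{W}^+$ is again $M$. Each Wahl singularity $\tfrac1{p^2}(1,pq-1)$ of $X^+$ contributes locally its $\mathbb{Q}$–Gorenstein smoothing, whose generic fiber is $B_{p,q}$; as these loci are disjoint, we already obtain one or two disjoint rational homology balls smoothly embedded in $M$.

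\emph{Infinitely many pairs.} To produce infinitely many pairs I would run the antiflip machinery of Section~\ref{seciton:Mori-sequences}: starting from $g\colon\mathcal{W}^+\to\mathcal{W}$ and applying successive mk1A and mk2A antiflips produces $\mathbb{Q}$–Gorenstein families $\mathcal{W}_n\to\Delta$ whose central fibers $X_n$ are normal projective surfaces with only Wahl singularities; being small modifications supported on the central fiber, each $\mathcal{W}_n$ has general fiber $M$. The Mori-sequence combinatorics (as carried out for Theorem~\ref{theorem:main-2-handlebody}) both allow one to continue indefinitely and force the number and the indices of the Wahl singularities of $X_n$ to grow without bound, so that smoothing $\mathcal{W}_n$ exhibits, for $n\gg0$, a pair $(B_{p,q},B_{p',q'})$ of disjoint rational homology balls in $M$, with distinct $n$ giving distinct pairs. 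The explicit numerics are those of Theorem~\ref{theorem:main-2-handlebody}, which I would simply quote.

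\emph{Non-simplicity, and the main obstacle.} Suppose one of the constructed embeddings $B_{p,q}\hookrightarrow M$ — with $B_{p,q}$ the Milnor fiber of a Wahl singularity $S\subset X_n$ — were simple, so that the rational blow-up $\widetilde M=(M\setminus B_{p,q})\cup C_{p,q}$ is diffeomorphic to $M\#k\,\overline{\mathbb{CP}^2}$ with $k=$ length of $C_{p,q}\ge 1$. Inside the MMP one identifies $\widetilde M$ with the Milnor fiber of $(Q\in Y)$ associated to the partial resolution $X_n'$ obtained from $X_n$ by replacing $S$ with its minimal resolution $C_{p,q}$; iterating over the remaining, pairwise disjoint, Wahl points of $X_n$ one reaches the regular neighborhood $N$ of the exceptional set of a resolution of $(Q\in Y)$, so that $N\cong V\#m\,\overline{\mathbb{CP}^2}$ where $V$ is the minimal resolution and $m\ge 0$. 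Because the rational blow-ups are at disjoint centers, the $k\ge 1$ exceptional $(-1)$–spheres produced in the (assumed simple) first step persist disjointly in $N$, whence $N$, and therefore $V$, would split off a copy of $\overline{\mathbb{CP}^2}$ — impossible, since for the negative-definite chain $[a_1,\dots,a_\ell]$ with all $a_i\ge 2$ one has $-\sum a_ic_i^2+2\sum c_ic_{i+1}\le-\bigl(c_1^2+c_\ell^2+\sum(c_i-c_{i+1})^2\bigr)\le-2$ for every nonzero $(c_i)$, so this lattice represents no class of square $-1$ and admits no $\langle-1\rangle$ orthogonal summand. The delicate point here is the iteration: verifying rigorously that rationally blowing up all Wahl points of $X_n$ returns a neighborhood of a \emph{minimal} resolution (so the lattice obstruction really applies to $V$), and tracking the first step's $(-1)$–spheres through all intermediate surgeries. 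A cleaner though less self-contained route is to argue at the level of families: a simple embedding would present $\mathcal{W}_n$ (equivalently $\mathcal{W}^+$) as birational over $\Delta$ to a blow-up of the trivial family with fiber $V$, contradicting the hypothesis that $(Q\in Y)$ admits an extremal P-resolution — the latter certifying that a nontrivial flip with $K$ ample on the contracted curve, not merely blow-ups, intervenes between $V\times\Delta$ and $\mathcal{W}^+$; this is the family-level shadow of Theorem~\ref{theorem:main-derived-from-MMP}.
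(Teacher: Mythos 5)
Your construction of the embeddings is essentially the paper's: you realize $X^+$ as the flip of an initial extremal neighborhood and run the Mori sequence of antiflips, reading off pairs of disjoint $B_{p,q}$'s from the Wahl singularities of the central fibers of the antiflipped families, all of whose general fibers are the Milnor fiber $M$. The only difference is that you quote the numerics instead of exhibiting the initial mk2A (the paper takes $(m_1,a_1)=(1,1)$ and $(m_2,a_2)=(cm'-a',m'-a')$ and checks it is initial and flipping with flip $X^+$). That part is fine.

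The non-simplicity argument, however, has a genuine gap, and you have correctly located it yourself without closing it. You rationally blow up \emph{all} Wahl points of $X_n$ to reach a neighborhood $N$ of a resolution of $(Q\in Y)$, write $N\cong V\# m\,\overline{\mathbb{CP}^2}$, note that the $k\ge 1$ exceptional spheres of the (assumed simple) first step persist in $N$, and conclude that ``$N$, and therefore $V$'' splits off a $\overline{\mathbb{CP}^2}$. But $N$ splitting off $\overline{\mathbb{CP}^2}$'s is no contradiction: $m$ is typically positive, so $N$ already contains disjoint $(-1)$-spheres, and nothing you have said forces the $k$ spheres from the first step to contribute $\langle-1\rangle$ summands \emph{beyond} the $m$ already accounted for; the inference to $V$ is exactly the unjustified step, and your lattice computation (which is correct for the chain) is therefore applied to the wrong manifold. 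Your fallback ``family-level'' argument does not close this either: the existence of an extremal P-resolution of $(Q\in Y)$ does not certify that the family fails to be birational over $\Delta$ to a blow-up of a trivial family (the simple embeddings of Theorem~\ref{theorem:main-linear} arise from families that do undergo genuine flips), and Theorem~\ref{theorem:main-derived-from-MMP} concerns neighborhoods of minimal resolutions, not Milnor fibers. The paper's argument avoids the iteration entirely and runs through Milnor numbers: the rational blow-up $Z$ of $M$ along a \emph{single} ball is the minimal resolution of the central fiber of the associated mk1A, hence is obtained from the minimal resolution $\widetilde Y$ of $(Q\in Y)$ by ordinary blow-ups; if the embedding were simple then also $Z$ is obtained from $M$ by ordinary blow-ups, and minimality of both $M$ and $\widetilde Y$ forces $M\cong\widetilde Y$; but then $b_2(M)$ equals the Milnor number of the Artin component, which is the maximal one (Wahl~\cite[(4.7.3)]{Wahl-1981}) and is attained only by the RDP $P$-resolution, contradicting the presence of a Wahl singularity on $X^+$. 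This Milnor-number comparison is the mechanism that actually produces the contradiction and is what is missing from your write-up.
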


Theorem~\ref{maintheorem-handlebody} may be regarded as a generalization of Park-Shin~\cite[Theorem~1.3]{PS-2017}:

\begin{theoremknown}[{Park-Shin~\cite[Theorem~1.3]{PS-2017}}]\label{theoremknown:PS}
There is a smoothly embedded $B_{p,1}$ in a non-simple way inside a $2$-handlebody corresponding to a certain twisted torus knot.
\end{theoremknown}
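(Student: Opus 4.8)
This is a special case of Theorem~\ref{maintheorem-handlebody}: it suffices to (a) exhibit a cyclic quotient surface singularity $(Q\in Y)$ with an extremal P-resolution $f^{+}\colon X^{+}\to Y$ whose associated Milnor fiber $M$ contains some $B_{p,1}$ among the rational homology balls furnished by that theorem, and (b) recognise $M$ as the $2$-handlebody of a twisted torus knot. So the plan is to carry out (a) and (b).

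For (a), I would take $X^{+}$ to carry a single Wahl singularity, of type $\tfrac{1}{p^{2}}(1,p-1)$, whose $\mathbb{Q}$-Gorenstein Milnor fiber is $B_{p,1}$. With the remaining data of the extremal P-resolution --- the central $\mathbb{P}^{1}$, its self-intersection, equivalently the $\delta$-data --- chosen appropriately, the associated Milnor fiber $M$ is obtained from the minimal resolution $\widetilde{X^{+}}$ by rationally blowing down the chain $C_{p,1}$ lying over the Wahl point, so that $B_{p,1}$ is smoothly embedded in $M$ together with the disjoint companions coming from the antiflips in the proof of Theorem~\ref{maintheorem-handlebody}. One should pick the data so that moreover $b_{2}(M)=1$, which is what makes $M$ a handlebody on a single $2$-handle; concrete such choices are the ones in PS~\cite{PS-2017}.

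For (b), I would run Kirby calculus on the plumbing diagram of $\widetilde{X^{+}}$. Since $C_{p,1}$ is the chain $[\,p+2,2,\dotsc,2\,]$, blowing down the $(-1)$-framed unknot introduced by the rational blow-down and then successively sliding over and blowing down the $(-2)$-framed unknots collapses the diagram, cancels one $1$-handle/$2$-handle pair, and exhibits $M$ as $B^{4}$ with a single $2$-handle attached along a knot obtained from a torus knot by inserting full twists on a band of parallel strands --- a twisted torus knot --- with a framing read off from the $\delta$-data. This diagrammatic reduction is the technical heart of the argument, and is essentially the computation in PS~\cite[\S3]{PS-2017}.

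Granting (a) and (b), Theorem~\ref{maintheorem-handlebody} says that every rational homology ball produced in $M$ by this MMP, in particular $B_{p,1}\hookrightarrow M$, is non-simply embedded; transporting this along the diffeomorphism of (b) gives the statement. The main obstacle is step (b): one must track, through the blow-downs and handle slides, the number of parallel strands, the number of inserted full twists, and the resulting framing, and --- crucially --- verify that exactly one handle pair cancels, so that $M$ is genuinely a $2$-handlebody rather than merely a $4$-manifold with $b_{2}=1$. A secondary point is ensuring that the non-simplicity criterion in the detailed form of Theorem~\ref{maintheorem-handlebody} applies to the specific ball $B_{p,1}$, and not only to the generic members of the infinite family; this is governed by the numerical choices in step (a).
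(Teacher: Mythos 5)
First, note that the paper itself gives no proof of this statement: it is quoted from PS~\cite{PS-2017}, and the paper only remarks that Theorem~\ref{theorem:main-2-handlebody} ``may be regarded as a generalization'' of it. Your strategy --- realize the statement as a special case of Theorem~\ref{theorem:main-2-handlebody}, then identify the Milnor fiber $M$ with the $2$-handlebody of a twisted torus knot by Kirby calculus --- is therefore the natural in-paper route. But your step (a) contains a genuine gap, and it is not the ``secondary point'' you mention at the end; it is the main issue. Theorem~\ref{theorem:main-2-handlebody} proves non-simplicity only for the embeddings induced from the mk2A's $\mathbb{E}_i$ of the Mori sequence, i.e.\ for the balls $B_{m_i,a_i}$ with $(m_i,a_i)$ the Mori recursion pairs; its proof degenerates each $\mathbb{E}_i$ to an mk1A and runs the Artin-component argument for that mk1A. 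You instead place the Wahl singularity $\frac{1}{p^2}(1,p-1)$ on $X^+$ itself and use the tautological embedding of its local Milnor fiber $B_{p,1}\hookrightarrow M$. That embedding exists, but it is not among those covered by the theorem: in the theorem's construction the single Wahl singularity of $X^+$ is $(m',a')$, the initial pair is $(m_1,a_1)=(1,1)$, and all later pairs satisfy $m_i\ge m_2=cm'-a'>m'$, so $(m',a')$ never reappears as a Mori pair (compare Proposition~\ref{proposition:flip-for-Mori-sequence}, where $m_1'=m_2-\delta m_1$). For instance, for $[4]-3$ the Wahl point of $X^+$ is $(2,1)$ while the antiflips produce $B_{5,2},B_{5,3},B_{14,5},\dotsc$, never $B_{2,1}$. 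So the non-simplicity of the particular $B_{p,1}$ you construct simply does not follow from the cited theorem.

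The fix is to choose the numerical data so that $(p,1)$ or $(p,p-1)$ occurs among the $(m_i,a_i)$ rather than as the Wahl point of $X^+$: with $(m_2,a_2)=(cm'-a',m'-a')$ as in the proof of Theorem~\ref{theorem:main-2-handlebody}, take $a'=m'-1$ and $(c-1)m'=p-1$; or invoke Proposition~\ref{proposition:delta=1}, whose mk2A $[2,\dotsc,2,p+2]-[2,\dotsc,2,\overline{p+3}]$ carries the Wahl pairs $(p,p-1)$ and $(p+1,p)$ and hence yields $B_{p,1}\sqcup B_{p+1,1}\hookrightarrow M$ with both embeddings non-simple. As for step (b), you correctly identify it as the technical heart --- one must verify that a $1$-handle/$2$-handle pair cancels so that $M$ is genuinely a $2$-handlebody over a twisted torus knot --- but you defer it entirely to PS~\cite{PS-2017}. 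That is acceptable when re-deriving a quoted theorem, but it means the only portion of the argument you actually supply is the portion that, as written, misapplies Theorem~\ref{theorem:main-2-handlebody}; both halves need to be repaired or filled in before this counts as a proof.
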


Since any Milnor fibers are Stein, they can be described as $2$-handlebodies over Legendrian links. It would be an intriguing problem to find twisted torus knots associated to Milnor fibers, which reinterprets the above theorems in the language of the MMP for $3$-folds and $2$-handlebodies, vice versa. We leave it for the future research.

The paper is organized as follows. In Section~\ref{section:preliminaries} we recall some definitions, notations, and key facts that will be used in the proofs in the next sections. This includes as main tool the explicit semi-stable minimal model program for 3-dimensional algebraic varieties as in HTU~\cite{HTU-2017}. Our main reference will be Urz{\'u}a~\cite[Section~2]{Urzua-2016}.
We prove that every simple embedding comes from MMP (Theorem~\ref{maintheorem-simple-via-MMP}) in Section~\ref{section:simple-embeddings-MMP}, via an argument involving semi-stable MMP and Milnor numbers of smoothings of cyclic quotient surface singularities. The proof of existence of infinitely many simple embeddings (Theorem~\ref{maintheorem-handlebody}) will be given in Section~\ref{section:Rational-homology-balls}, and involves a key combinatorial fact on Wahl singularities plus the universal family of extremal neighborhoods in HTU~\cite{HTU-2017}. We provide non-simple embeddings in Section~\ref{section:other-embedding} using directly semi-stable MMP.

%------------------------------------------------------------------------------
\subsection*{Acknowledgements}

Heesang Park was supported by Basic Science Research Program through the National Research Foundation of Korea (NRF) funded by the Ministry of Education: NRF-2018R1D1A1B07042134. This paper was written as part of Konkuk University's research support program for its faculty on sabbatical leave in 2019. Dongsoo Shin was supported by Samsung Science and Technology Foundation under Project Number SSTF-BA1402-03. Giancarlo Urz{\'u}a was supported by the FONDECYT regular grant 1190066. A part of this work was done when Heesang Park and Dongsoo Shin were on sabbatical leave at Korea Institute for Advanced Study. They would like to thank KIAS for warm hospitality and financial support.

%------------------------------------------------------------------------------
\section{Preliminaries}
\label{section:preliminaries}

We briefly recall some basics on the semi-stable MMP and the related topics used in the present paper. For more details we will refer to Urz{\'u}a~\cite[Section~2]{Urzua-2016}. We would like to point out that Evans-Urz{\'u}a~\cite{Evans-Urzua-2018} gives a symplectic topology guide to this MMP.

%------------------------------------------------------------------------------
\subsection{Singularities and their $P$-resolutions}

A \emph{quotient surface singularity} is a germ of the quotient $\mathbb{C}^2/G$ by a (small) finite subgroup $G \le GL(2,\mathbb{C})$. In particular, a \emph{cyclic quotient surface singularity} of type $\frac{1}{n}(1,a)$ ($n > a \ge 1$, $(n,a)=1$) is a germ of a quotient $\mathbb{C}^2/\mu_n$ of $\mathbb{C}^2$ by a multiplicative group $\mu_n =\langle \zeta \rangle$, $\zeta = \exp(2 \pi \sqrt{-1} /n)$, via the action $\zeta \cdot (x, y) = (\zeta x, \zeta^a y)$. Let $E_1, \dotsc, E_s$ be the exceptional curves of the minimal resolution of a cyclic quotient surface singularity of type $\frac{1}{n}(1,a)$ with $E_j \cong \mathbb{CP}^1$ and $E_j^2=-e_j \le -2$ for all $j$. Then the self-intersection number $e_i$'s are determined by the \emph{Hirzebruch-Jung continued fraction} of $n/a$, that is,
\begin{equation*}
\frac{n}{a} = e_1 - \cfrac{1}{e_2 - \cfrac{1}{\ddots - \cfrac{1}{e_s}}} =: [e_1, e_2, \dotsc, e_s]
\end{equation*}
and the dual graph of $E_1, \dotsc, E_s$ is a \emph{linear chain of rational curves}
\begin{equation*}
\begin{tikzpicture}[scale=0.75]
\node[bullet] (10) at (1,0) [label=above:{$-e_1$},label=below:{$E_1$}] {};
\node[bullet] (20) at (2,0) [label=above:{$-e_2$},label=below:{$E_2$}] {};

\node[empty] (250) at (2.5,0) [] {};
\node[empty] (30) at (3,0) [] {};

\node[bullet] (350) at (3.5,0) [label=above:{$-e_s$},label=below:{$E_s$}] {};
%\node[bullet] (450) at (4.5,0) [label=above:{$-3$}] {};
%\node[bullet] (550) at (5.5,0) [label=above:{$-1$}] {};

%%\draw [-] (00)--(10);
\draw [-] (10)--(20);
\draw [-] (20)--(250);
\draw [dotted] (20)--(350);
\draw [-] (30)--(350);
%\draw [-] (350)--(450);
%%\draw [-] (450)--(550);
\end{tikzpicture}
\end{equation*}

Notice that two Hirzebruch-Jung continued fractions $n/a=[e_1,\dotsc,e_s]$ and $n/(n-a)=[b_1, \dotsc, b_e]$ are \emph{dual} to each other, this is, we have
\begin{equation*}
[e_1,\dotsc,e_s,1,b_e,\dotsc,b_1]=0,
\end{equation*}
which implies that the linear chain of rational curves
\begin{tikzpicture}[scale=0.75]
\node[bullet] (20) at (2,0) [label=above:{$-e_1$}] {};

\node[empty] (250) at (2.5,0) [] {};
\node[empty] (30) at (3,0) [] {};

\node[bullet] (350) at (3.5,0) [label=above:{$-e_s$}] {};
\node[bullet] (450) at (4.5,0) [label=above:{$-1$}] {};

\node[bullet] (550) at (5.5,0) [label=above:{$-b_e$}] {};

\node[empty] (60) at (6,0) [] {};
\node[empty] (650) at (6.5,0) [] {};

\node[bullet] (70) at (7,0) [label=above:{$-b_1$}] {};

\draw [-] (20)--(250);
\draw [dotted] (20)--(350);
\draw [-] (30)--(350);

\draw [-] (350)--(450);
\draw [-] (450)--(550);

\draw [-] (550)--(60);
\draw [dotted] (550)--(70);
\draw [-] (650)--(70);
\end{tikzpicture}
is blown down to
\begin{tikzpicture}
\node[bullet] (00) at (0,0) [label=above:{0}] {};
\end{tikzpicture}.

Let $0 \in \mathbb D$ be the an analytic germ of a smooth curve (i.e. an arbitrarily small disk). A one-parameter deformation $(Y \subset \mathcal{Y}) \to (0 \in \mathbb{D})$ of a normal surface $Y$ is a \emph{smoothing} of $Y$ if a general fiber is smooth. It is \emph{$\mathbb{Q}$-Gorenstein} if $K_{\mathcal{Y}}$ is $\mathbb{Q}$-Cartier. A normal surface singularity is of \emph{class $T$} if it is a quotient surface singularity and admits a $\mathbb{Q}$-Gorenstein one-parameter smoothing KSB~\cite[Definition~3.7]{Kollar-Shepherd-Barron-1988}. Singularities of class $T$ are determined completely KSB~\cite[Proposition~3.10]{Kollar-Shepherd-Barron-1988}, and they can be rational double points or a cyclic quotient surface singularities of type $\frac{1}{dn^2}(1,dna-1)$ for $d \ge 1$, $n > a \ge 1$, $(n,a)=1$. Non rational double point T-singularities have a very particular minimal resolution KSB~\cite[Proposition~3.11]{Kollar-Shepherd-Barron-1988}.

A \emph{Wahl singularity defined by a pair $(n,a)$} is a cyclic quotient surface singularity of type $\frac{1}{n^2}(1,na-1)$ for $n > a \ge 1$, $(n,a)=1$. Every Wahl singularity admits a $\mathbb{Q}$-Gorenstein smoothing whose Milnor fiber is diffeomorphic to a rational homology ball $B_{p,q}$; Wahl~\cite[Examples~(5.9.1)]{Wahl-1981}.

Wahl singularities satisfy the following particular property, which will be key for our result on rational homology balls embedded in negative linear chains.

\begin{corollary}
Let $0<a<n$ be integers with gcd $(n,a)=1$. Let $n/a=[a_1,\dotsc,a_p]$ and $n/(n-a)=[b_1, \dotsc, b_q]$. Then

\begin{equation*}
\frac{n^2}{na-1} = [a_1, \dotsc, a_{p-1}, a_p+b_q, b_{q-1}, \dotsc, b_1].
\end{equation*}
\end{corollary}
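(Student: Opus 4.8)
The plan is to encode Hirzebruch--Jung continued fractions by matrices and reduce everything to one short computation in $SL_2(\mathbb Z)$. For an integer $e$ set $M(e)=\begin{pmatrix} e & -1\\ 1 & 0\end{pmatrix}$; the standard fact is that for integers $c_1,\dots,c_k\ge 1$,
\[
M(c_1)\cdots M(c_k)=\begin{pmatrix} p_k & -p_{k-1}\\ q_k & -q_{k-1}\end{pmatrix},
\]
where $p_k/q_k=[c_1,\dots,c_k]$ is the numerator/denominator of the continued fraction (with the usual recursions $p_k=c_k p_{k-1}-p_{k-2}$, $p_0=1$, $p_{-1}=0$, and likewise for $q_k$). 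In particular the top-left entry of such a product is the numerator of the continued fraction, the bottom-left entry is its denominator, and the determinant is $1$. Applying this to $n/a=[a_1,\dots,a_p]$ and writing $A:=M(a_1)\cdots M(a_p)=\begin{pmatrix} n & -n'\\ a & -a'\end{pmatrix}$, one has $n'=$ numerator of $[a_1,\dots,a_{p-1}]$, and $\det A=1$ reads $n'a-na'=1$, so in particular $n'a\equiv 1\pmod n$.

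Next I would record two easy identities. First, $M(a_p+b_q)=M(a_p)\begin{pmatrix}0&1\\-1&0\end{pmatrix}M(b_q)$, a one-line multiplication (note $\begin{pmatrix}0&1\\-1&0\end{pmatrix}=M(0)^{-1}$). Second, the reversal identity $M(c_k)\cdots M(c_1)=J\big(M(c_1)\cdots M(c_k)\big)^{-1}J$ with $J=\begin{pmatrix}0&1\\1&0\end{pmatrix}$, which follows from $M(c)^{-1}=JM(c)J$. Using the reversal identity together with $M(b_1)\cdots M(b_q)=\begin{pmatrix} n & -N\\ n-a & -N'\end{pmatrix}$ (top-left $=n$, bottom-left $=n-a$ since $n/(n-a)=[b_1,\dots,b_q]$), one finds $B:=M(b_q)M(b_{q-1})\cdots M(b_1)=\begin{pmatrix} n & -(n-a)\\ N & -N'\end{pmatrix}$, where $N$ is the numerator of $[b_1,\dots,b_{q-1}]$; moreover $\det B=1$ gives $N(n-a)\equiv 1\pmod n$, and $0<N<n$. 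Since also $(n-n')(n-a)\equiv n'a\equiv 1\pmod n$ and $0<n-n'<n$, while $\gcd(n-a,n)=1$, this forces $N=n-n'$.

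Now the computation. By the first identity,
\[
M(a_1)\cdots M(a_{p-1})\,M(a_p+b_q)\,M(b_{q-1})\cdots M(b_1)=A\begin{pmatrix}0&1\\-1&0\end{pmatrix}B=\begin{pmatrix} n' & n\\ a' & a\end{pmatrix}B,
\]
and multiplying by $B=\begin{pmatrix} n & -(n-a)\\ n-n' & -N'\end{pmatrix}$ the first column comes out to be $\begin{pmatrix} n'n+n(n-n')\\ a'n+a(n-n')\end{pmatrix}=\begin{pmatrix} n^2\\ na-1\end{pmatrix}$, where the bottom entry uses exactly $n'a-na'=1$. Hence $[a_1,\dots,a_{p-1},a_p+b_q,b_{q-1},\dots,b_1]$ equals $n^2/(na-1)$ as a fraction (and it is in lowest terms, as $\gcd(n^2,na-1)=1$). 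Since every entry of this list is $\ge 2$ (the $a_i$'s and $b_j$'s because they occur in honest Hirzebruch--Jung expansions, and $a_p+b_q\ge 4$), it is the Hirzebruch--Jung continued fraction of $n^2/(na-1)$, which is the assertion.

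The delicate point is the bookkeeping in the second paragraph: pinning down $N=n-n'$ exactly (not merely modulo $n$) needs the strict inequalities $0<n-n'<n$ and $0<N<n$, and one must also dispose of the degenerate cases $p=1$ (then $a=1$, $n=a_1$, and the block $a_1,\dots,a_{p-1}$ is empty) and $q=1$ (then $a=n-1$, $b_1=n$, all $a_i=2$, and the block $b_{q-1},\dots,b_1$ is empty), in which the same matrix identity applies verbatim once the empty products are read as $I$; everything else is routine $2\times 2$ matrix algebra. Alternatively one could argue geometrically: the chain $[a_1,\dots,a_p,1,b_q,\dots,b_1]$ blows down to a single $0$-curve, hence to $[1,1]$ by interior blow-downs, which yields $M(a_1)\cdots M(a_p)\,M(1)\,M(b_q)\cdots M(b_1)=M(1)^2$ directly and then the same final multiplication; but making the ``interior'' claim precise hides exactly the same continued-fraction duality, so I would prefer the reversal-matrix route.
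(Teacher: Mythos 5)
Your argument is correct: the matrix identities you use (the encoding of $[c_1,\dotsc,c_k]$ by products of $M(e)=\left(\begin{smallmatrix} e & -1\\ 1 & 0\end{smallmatrix}\right)$, the factorization $M(a_p+b_q)=M(a_p)\left(\begin{smallmatrix}0&1\\-1&0\end{smallmatrix}\right)M(b_q)$, and the reversal identity via $J$) all check out, the determinant bookkeeping correctly pins down $n'a-na'=1$ and $N=n-n'$, and the final column computation does produce $(n^2,\,na-1)^{T}$; together with the observation that all entries of the resulting string are $\ge 2$, this identifies the Hirzebruch--Jung expansion. However, the paper does not prove this corollary at all: its ``proof'' is a pointer to Owens~\cite[Lemma~3.1]{Owens-2017} and to Hacking--Prokhorov~\cite{Hacking-Prokhorov-2010}, so your write-up is a genuinely self-contained substitute rather than a variant of an argument in the text. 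The route the paper implicitly leans on is the duality $[a_1,\dotsc,a_p,1,b_q,\dotsc,b_1]=0$ and its blow-down interpretation --- this is exactly how the paper proves the adjacent Corollary~\ref{corollray:[a]->[T]-[a]}, and it is the ``geometric alternative'' you sketch in your last sentence. The trade-off is as you describe it: the duality/blow-down argument is shorter and matches the geometric use the paper makes of these chains, but it quietly invokes the same continued-fraction reversal you make explicit; your $SL_2(\mathbb Z)$ computation is longer but mechanical, handles the degenerate cases $p=1$ and $q=1$ uniformly (empty products read as the identity), and isolates the one genuinely delicate step, namely upgrading $N\equiv n-n'\pmod n$ to an equality using $0<N<n$ and $0<n-n'<n$.
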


\begin{proof}
See Owens~\cite[Lemma~3.1]{Owens-2017} for example. See also Hacking-Prokhorov~\cite{Hacking-Prokhorov-2010} in more generality.
\end{proof}

\begin{corollary}\label{corollray:[a]->[T]-[a]}
Let
\begin{tikzpicture}[scale=0.75]
\node[bullet] (20) at (2,0) [label=above:{$-a_1$}] {};

\node[empty] (250) at (2.5,0) [] {};
\node[empty] (30) at (3,0) [] {};

\node[bullet] (350) at (3.5,0) [label=above:{$-a_p$}] {};

\draw [-] (20)--(250);
\draw [dotted] (20)--(350);
\draw [-] (30)--(350);
\end{tikzpicture}
be a linear chain with $n/a=[a_1,\dotsc,a_p]$. Blowing up the vertex
\begin{tikzpicture}[scale=0.75]
\node[bullet] (350) at (3.5,0) [label=above:{$-a_p$}] {};
\end{tikzpicture}
appropriately, we have the following linear chain
\begin{equation*}
\begin{tikzpicture}
\node[bullet] (20) at (2,0) [label=above:{$-a_1$}] {};

\node[empty] (250) at (2.5,0) [] {};
\node[empty] (30) at (3,0) [] {};

\node[bullet] (350) at (3.5,0) [label=above:{$-a_{p-1}$}] {};
\node[bullet] (450) at (4.5,0) [label=above:{$-a_p-b_q$}] {};

\node[bullet] (550) at (5.5,0) [label=above:{$-b_{q-1}$}] {};

\node[empty] (60) at (6,0) [] {};
\node[empty] (650) at (6.5,0) [] {};

\node[bullet] (70) at (7,0) [label=above:{$-b_1$}] {};

\node[bullet] (80) at (8,0) [label=above:{$-1$}] {};

\node[bullet] (90) at (9,0) [label=above:{$-a_1$}] {};

\node[empty] (950) at (9.5,0) [] {};
\node[empty] (100) at (10,0) [] {};

\node[bullet] (1050) at (10.5,0) [label=above:{$-a_p$}] {};

\draw [-] (20)--(250);
\draw [dotted] (20)--(350);
\draw [-] (30)--(350);

\draw [-] (350)--(450);
\draw [-] (450)--(550);

\draw [-] (550)--(60);
\draw [dotted] (550)--(70);
\draw [-] (650)--(70);
\draw [-] (70)--(80);
\draw [-] (80)--(90);

\draw [-] (90)--(950);
\draw [dotted] (90)--(1050);
\draw [-] (100)--(1050);
\end{tikzpicture}
\end{equation*}
\end{corollary}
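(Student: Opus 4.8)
The plan is to deduce this from the preceding corollary together with the ``$0$‑chain'' identity $[e_1,\dots,e_s,1,b_e,\dots,b_1]=0$ recalled above. Write $n/(n-a)=[b_1,\dots,b_q]$, and let $E_1\cup\dots\cup E_p$ (with $E_i^2=-a_i$) be the exceptional chain of the minimal resolution of $Y:=\frac1n(1,a)$. By the preceding corollary, the chain to be produced is
\[
\Gamma^{+}:=[a_1,\dots,a_{p-1},\,a_p+b_q,\,b_{q-1},\dots,b_1,\,1,\,a_1,\dots,a_p],
\]
i.e.\ the resolution chain $[n^2/(na-1)]$ of that corollary, followed by a $(-1)$‑curve, followed by a second copy of $[a_1,\dots,a_p]$. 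So it suffices to exhibit a sequence of blow‑ups of the minimal resolution, the first centred at a point of $E_p$ off $E_{p-1}$ and the rest at points infinitely near it, whose reduced exceptional divisor over $Y$ is $\Gamma^{+}$; equivalently (since $E_1,\dots,E_{p-1}$ are then left alone), that the part of $\Gamma^{+}$ from $E_p$ onward, namely $[a_p+b_q,b_{q-1},\dots,b_1,1,a_1,\dots,a_p]$, contracts within itself to a single curve of self‑intersection $-a_p$.

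Decreasing the leading entry by $a_p$, this is equivalent to showing that
\[
Z:=[b_q,b_{q-1},\dots,b_1,\,1,\,a_1,\dots,a_p]
\]
contracts to a single $0$‑curve, and moreover can be contracted so that its leading curve (self‑intersection $-b_q$) is the surviving one. That $Z$ contracts to $[0]$ is an instance of the $0$‑chain identity: letting $a^{\ast}\in\{1,\dots,n-1\}$ be the inverse of $a$ mod $n$, the reversal property of Hirzebruch--Jung expansions makes $[a_p,\dots,a_1]$ the expansion of $n/a^{\ast}$, while $(n-a)^{-1}\equiv n-a^{\ast}\pmod n$ makes $[b_q,\dots,b_1]$ the expansion of $n/(n-a^{\ast})$; hence the reverse of $Z$, namely $[a_p,\dots,a_1,1,b_1,\dots,b_q]$, has exactly the shape appearing in the identity and equals $0$. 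The refinement — that the leading $-b_q$ curve may be taken as the survivor — I would get by induction on $n$: for $n=2$ one has $Z=[2,1,2]$, which contracts to $[0]$ with either outer curve surviving; in general, contracting the unique $(-1)$‑curve of $Z$ once turns it into the chain $Z$ attached to a strictly smaller coprime pair — with $(n,a)$ replaced by $(a,2a-n)$ when $a>n/2$, and by $(n-a,a)$ when $a<n/2$ — while not touching the leading curve, so the inductive hypothesis carries over.

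Granting this, the corollary follows: the statement ``$Z$ contracts to $[0]$ with the leading curve surviving'' says exactly that $Z$ is the reduced total transform of a smooth $0$‑curve under a sequence of blow‑ups all infinitely near one of its points, with the $0$‑curve's strict transform becoming the leading curve of $Z$. Performing that same sequence in the minimal resolution of $Y$, with $E_p$ (of self‑intersection $-a_p$ rather than $0$) in the role of the $0$‑curve and with the first blow‑up at a point of $E_p$ off $E_{p-1}$, reproduces $[a_p+b_q,b_{q-1},\dots,b_1,1,a_1,\dots,a_p]$ on the $E_p$‑side, leaves $E_1,\dots,E_{p-1}$ intact, and so has reduced exceptional divisor $\Gamma^{+}$ over $Y$; this is the asserted ``appropriate'' blow‑up of the vertex $-a_p$. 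The delicate point is the induction above — in particular the continued‑fraction bookkeeping verifying that a single $(-1)$‑contraction sends $Z$ for $(n,a)$ exactly to $Z$ for the smaller pair and never eats the designated end curve; this is routine Hirzebruch--Jung combinatorics but needs some care to state cleanly.
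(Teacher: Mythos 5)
Your proof is correct and takes essentially the same route as the paper, whose entire argument is the citation of the identity $[b_q,\dotsc,b_1,1,a_1,\dotsc,a_p]=0$; you merely make explicit the two routine points it leaves as ``easy'', namely that this identity is the reversal/dual form of the one recalled in the preliminaries (via the mod-$n$ inverse $a^{\ast}$), and that the contraction to $[0]$ can be arranged with the leading curve as the survivor, which is exactly what is needed to read the identity as a sequence of blow-ups based at the $-a_p$ vertex. One small correction to your induction: when $a=n-1$ (so $q=1$) the leading entry \emph{is} decremented by the contraction, but the resulting chain is still $Z$ for the smaller pair with the image of the leading curve in front, so the induction is unaffected.
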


\begin{proof}
This follows easily from the fact that $[b_q,\dotsc,b_1,1,a_1,\dotsc,a_p]=0$.
\end{proof}

Koll{\'a}r and Shepherd-Barron~\cite[Section~3]{Kollar-Shepherd-Barron-1988} show that any deformations of a quotient surface singularity can be induced deformations from certain special partial resolutions (called \emph{$P$-resolutions}; Definition~\ref{definition:P-resolution}) with only singularities of class $T$.

\begin{definition}\label{definition:P-resolution}
A \emph{$P$-resolution} of (a germ of) a quotient surface singularity $(Q \in Y)$ is a partial resolution $g \colon Z \to Y$ of $Q \in Y$ such that $Z$ has only singularities of class $T$ and $K_Z$ is ample relative to $g$.
\end{definition}

\begin{proposition}[{cf. KSB~\cite[Theorem~3.9]{Kollar-Shepherd-Barron-1988}}]\label{proposition:P-resolution}
For any smoothing $\pi \colon \mathcal{Y} \to \mathbb{D}$ of a quotient surface singularity $(Q \in Y)$, there is a $P$-resolution $g \colon Z \to Y$ and a $\mathbb{Q}$-Gorenstein smoothing $\psi \colon \mathcal{Z} \to \mathbb{D}$ inducing $\mathbb{Q}$-Gorenstein smoothings of each singularities of class $T$ in $Z$ such that the smoothing $\mathcal{Z} \to \mathbb{D}$ \emph{blows down} to $\mathcal{Y} \to \mathbb{D}$, that is, there is a morphism $G \colon \mathcal{Z} \to \mathcal{Y}$ satisfying $\psi = \pi \circ G$.
\end{proposition}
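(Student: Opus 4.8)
The plan is to follow \cite[Theorem~3.9]{Kollar-Shepherd-Barron-1988}: one produces the $P$-resolution $g\colon Z\to Y$ as the central fibre of a relative minimal/canonical model of a resolution of the total space $\mathcal{Y}$, and then reads off all the assertions fibrewise. Since everything is local around $(Q,0)\in\mathcal{Y}$, I would work with analytic germs throughout. Note that $\pi$ is already smooth away from $(Q,0)$, so $\pi$ is flat with reduced central fibre $Y$ and smooth general fibre, and $\mathcal{Y}$ itself is smooth outside $(Q,0)$.

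For the construction, choose a resolution $\rho\colon W\to\mathcal{Y}$ which is an isomorphism over $\mathcal{Y}\setminus\{(Q,0)\}$. As the $\rho$-exceptional locus is proper over the point $(Q,0)$, the three-fold minimal model program for $W$ over $\mathcal{Y}$ applies; running it and passing to the relative canonical model produces $G\colon\mathcal{Z}\to\mathcal{Y}$ with $\mathcal{Z}$ normal and $\mathbb{Q}$-Gorenstein, with at worst canonical singularities, with $K_{\mathcal{Z}}$ ample over $G$, and with $G$ an isomorphism over $\mathcal{Y}\setminus\{(Q,0)\}$. Put $\psi:=\pi\circ G\colon\mathcal{Z}\to\mathbb{D}$, let $Z$ be its central fibre, and set $g:=G|_{Z}\colon Z\to Y$.

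Granting that $G$ has no exceptional divisor — the delicate point, discussed below — the central fibre $Z=G^{*}Y$ is a reduced, irreducible surface birational to $Y$, and the conclusions follow essentially formally. Indeed $G$ leaves the general fibres untouched, so $\psi$ is again a one-parameter family with smooth general fibre, i.e.\ a smoothing of $Z$, and it is $\mathbb{Q}$-Gorenstein because $K_{\mathcal{Z}}$ is $\mathbb{Q}$-Cartier; hence it restricts to a one-parameter $\mathbb{Q}$-Gorenstein smoothing of each singularity of $Z$. Because $Z=\psi^{*}(0)$ is a fibre, $\mathcal{O}_{\mathcal{Z}}(Z)|_{Z}\cong\mathcal{O}_{Z}$, so adjunction gives $K_{\mathcal{Z}}|_{Z}=K_{Z}$ and therefore $K_{Z}$ is ample relative to $g$. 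Moreover $Z$ is normal (it is Cohen--Macaulay by flatness over the disk with $\mathcal{Z}$ Cohen--Macaulay, and regular in codimension one since its singular locus is finite), and inversion of adjunction for the purely log terminal pair $(\mathcal{Z},Z)$ shows $Z$ has only log terminal, hence quotient, surface singularities; being $\mathbb{Q}$-Gorenstein smoothable by the family just built, these are of class $T$ by definition (and the classification recalled above). Thus $g\colon Z\to Y$ is a $P$-resolution, $\psi\colon\mathcal{Z}\to\mathbb{D}$ is the required $\mathbb{Q}$-Gorenstein smoothing inducing $\mathbb{Q}$-Gorenstein smoothings of the class $T$ points of $Z$, and $G\colon\mathcal{Z}\to\mathcal{Y}$ is the blow-down with $\psi=\pi\circ G$.

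The step I expect to be the main obstacle is exactly the one granted above: that the relative model $G$ can be arranged to be \emph{small}, i.e.\ with no $G$-exceptional divisor over $(Q,0)$, so that $Z$ really is a partial resolution of $Y$ rather than a reducible degeneration with extra vertical components. This cannot be read off from the usual discrepancy bookkeeping, because $\mathcal{Y}$ itself need not be $\mathbb{Q}$-Gorenstein. Here one feeds in the deformation theory of $(Q\in Y)$ as in \cite[Section~3]{Kollar-Shepherd-Barron-1988}: the smoothing $\pi$ factors through a single irreducible component of $\Def(Q\in Y)$, and every such component is the image under blowing down of the functor $\DefQG(Z')$ of a unique $P$-resolution $Z'$ — a functor which is smooth, of dimension the number of class $T$ points of $Z'$, since the local $T^{1}_{\mathrm{QG}}$ of a class $T$ singularity is one-dimensional and the global obstructions vanish. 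Pulling this universal $\mathbb{Q}$-Gorenstein deformation back along the induced map $\mathbb{D}\to\DefQG(Z')$ then supplies $\mathcal{Z}\to\mathbb{D}$ together with the blow-down $G$, and identifies $Z'$ with the $Z$ constructed above. Either route — the explicit minimal model, or the deformation-theoretic one — yields the statement.
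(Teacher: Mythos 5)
The paper does not actually prove this proposition: it is stated with the tag ``cf.\ KSB Theorem~3.9'' and used as a black box, so there is no in-paper argument to compare yours against. Your first route --- resolve the total space and pass to the relative canonical model $G\colon\mathcal{Z}\to\mathcal{Y}$ --- is indeed the Koll\'ar--Shepherd-Barron strategy, and you have correctly isolated the genuinely hard point, namely that $G$ contracts every exceptional divisor of the resolution so that the central fibre $Z=\psi^{*}(0)$ is reduced, irreducible and normal. But the repair you offer for that point is circular as a proof: the assertion that every irreducible component of $\Def(Q\in Y)$ is the image of $\DefQG(Z')$ for a unique $P$-resolution $Z'$ \emph{is} KSB Theorem~3.9, i.e.\ the statement being established. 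So in the end both of your routes defer to the cited theorem; that is exactly what the authors do, and it is acceptable here, but you should present it as a citation rather than as a self-contained argument.

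Two smaller technical points. First, the inversion-of-adjunction step runs in the wrong direction: knowing that $\mathcal{Z}$ is canonical and $Z$ is Cartier does not give you that $(\mathcal{Z},Z)$ is plt --- pltness of that pair is essentially \emph{equivalent} to normality plus kltness of $Z$, which is what you are trying to prove; likewise ``regular in codimension one since its singular locus is finite'' presupposes finiteness of the singular locus, whereas a priori it could contain the whole curve $g^{-1}(Q)$. Second, the local $T^{1}_{\mathrm{QG}}$ of a class $T$ singularity of type $\frac{1}{dn^{2}}(1,dna-1)$ has dimension $d$, not $1$; it is one-dimensional only for Wahl singularities. Finally, in the deformation-theoretic route the classifying arc $\mathbb{D}\to\Def(Q\in Y)$ need not lift through $\DefQG(Z')\to\Def(Q\in Y)$ without a finite base change of the disk; this issue does not arise in the canonical-model route, where $G$ and the factorization $\psi=\pi\circ G$ come for free from the construction --- another reason to prefer that route if the smallness of $G$ is established honestly.
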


%------------------------------------------------------------------------------
\subsection{The semi-stable MMP and flips}

We will use in our proofs the explicit semi-stable MMP for families of surfaces over $\mathbb D$ which is described in HTU~\cite{HTU-2017}. We will refer to  Urz{\'u}a~\cite[Section~2]{Urzua-2016} as a quick concise reference.

The notation for a three dimensional \emph{extremal neighborhood} will be $$(C \subset X \subset \mathcal{X}) \to (Q \in Y \subset \mathcal{Y}),$$ where $C$ will be always a $\mathbb P^1$, $X$ a surface with at most two Wahl singularities such that $C^2<0$ and $C \cdot K_X <0$, and $C$ contracts to a cyclic quotient singularity $(Q \in Y)$. An extremal neighborhood is said to be \emph{flipping} if the exceptional set of $F$ is $C$. On the other hand, if it is not flipping, then the exceptional set of $F$ is of dimension $2$. In this case we call it a \emph{divisorial} extremal neighborhood. These flipping or divisorial contractions will be always mk1A or mk2A; see Urz{\'u}a~\cite[Section~2]{Urzua-2016} for definition and numerical description. In these cases, for any flipping extremal neighborhood we always have a proper birational morphism called \emph{flip} $$(C^+ \subset X^+ \subset \mathcal{X}^+) \to (Q \in Y \subset \mathcal{Y})$$ where $X^+ \to Y$ is an extremal P-resolution of $(Q \in Y)$. We also refer the induced birational map $\mathcal{X} \dashrightarrow \mathcal{X}^+$ as \emph{flip}. We recall that an \emph{extremal $P$-resolution $f^+ \colon X^+ \to Y$} of a cyclic quotient singularity $(Q \in Y)$ is a $P$-resolution such that $C^+=(f^+)^{-1}(Q)$ is an irreducible curve with only Wahl singularities.

A useful fact is the following.

\begin{proposition}[{see e.g. Urz{\'u}a~\cite[Proposition~2.8]{Urzua-2016}}]
\label{proposition:divisorial-contraction-for-mk12A}
If an mk1A or mk2A is divisorial, then $(Q \in Y)$ is a Wahl singularity. And the divisorial contraction $F \colon \mathcal{X} \to \mathcal{Y}$ induces the blowing down of a $(-1)$-curve between the smooth fibers of $\mathcal{X} \to \mathbb{D}$ and $\mathcal{Y} \to \mathbb{D}$.
\end{proposition}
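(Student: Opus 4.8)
The plan is to prove the two assertions separately, establishing the statement about general fibres first because it feeds into the proof that $(Q\in Y)$ is a Wahl singularity.

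\emph{Set-up and a global observation.} Let $E\subset\mathcal{X}$ be the exceptional locus of $F$; by hypothesis $E$ is a prime divisor, and since an mk1A or mk2A contracts a single $K_{\mathcal{X}}$-negative extremal ray (recall $C\cdot K_X<0$) we have $\rho(\mathcal{X}/\mathcal{Y})=1$, so $E$ is irreducible and $\overline{NE}(\mathcal{X}/\mathcal{Y})=\mathbb{R}_{\ge0}[C]$. First I would check that $E$ is \emph{horizontal} over $\mathbb{D}$: if $E$ were contained in a fibre of $\pi\colon\mathcal{X}\to\mathbb{D}$ it would equal that fibre (both are prime divisors, and the special fibre $X$ as well as the general one are irreducible), and then $F$, which contracts $E$ onto something of dimension at most $1$, could not be surjective, since a point of the corresponding fibre of $\mathcal{Y}\to\mathbb{D}$ lying outside $F(E)$ would have empty preimage. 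Hence $E\to\mathbb{D}$ is surjective.

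\emph{The general fibres.} Fix general $t\in\mathbb{D}$. Then $X_t$ is a smooth projective surface and $E_t:=E\cap X_t$ is an irreducible curve contracted by $F|_{X_t}$. Since $[E_t]$ lies on the ray $\overline{NE}(\mathcal{X}/\mathcal{Y})=\mathbb{R}_{\ge0}[C]$ and $K_{\mathcal{X}}$ is negative on that ray (as $K_{\mathcal{X}}\cdot C=K_X\cdot C<0$, by adjunction for the Cartier divisor $X\subset\mathcal{X}$, whose normal bundle is trivial), we get $K_{X_t}\cdot E_t=K_{\mathcal{X}}\cdot E_t<0$. Combined with $E_t^2<0$, the integers $E_t^2$ and $K_{X_t}\cdot E_t$ are both negative and, by adjunction on the smooth surface $X_t$, satisfy $E_t^2+K_{X_t}\cdot E_t=2p_a(E_t)-2\ge-2$; hence $E_t^2=K_{X_t}\cdot E_t=-1$ and $E_t\cong\mathbb{P}^1$. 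So $F|_{X_t}\colon X_t\to Y_t$ is the contraction of the single $(-1)$-curve $E_t$ and $Y_t$ is smooth, which is the second assertion.

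\emph{The singularity $(Q\in Y)$.} By definition of an extremal neighbourhood $(Q\in Y)$ is a cyclic quotient singularity, and since $F$ contracts a $K_{\mathcal{X}}$-negative extremal ray, $K_{\mathcal{Y}}=F_*K_{\mathcal{X}}$ is $\mathbb{Q}$-Cartier; so for a small neighbourhood $U$ of $Q$ the restriction $\mathcal{Y}|_U\to\mathbb{D}$ is a $\mathbb{Q}$-Gorenstein smoothing of $(Q\in Y)$, with Milnor fibre $U_t:=U\cap Y_t$. I would show $b_2(U_t)=0$. Since $F^{-1}(Q)=C$, the open set $F^{-1}(U)$ is a neighbourhood of $C$ in $\mathcal{X}$; its central fibre $F^{-1}(U)\cap X$ is a neighbourhood of $C$ in $X$, which has only Wahl singularities (the only singularities of $X$ in an mk1A or mk2A) and deformation retracts onto $C\cong\mathbb{P}^1$, so $b_2=1$. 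Because the Milnor fibre of a Wahl singularity is a rational homology ball, a $\mathbb{Q}$-Gorenstein smoothing of a surface germ with only Wahl singularities leaves $b_2$ unchanged (Mayer--Vietoris, replacing each cone neighbourhood by the corresponding rational homology ball), so $b_2\!\left(F^{-1}(U)\cap X_t\right)=1$. On the other hand $F^{-1}(U)\cap X_t=F^{-1}(U_t)$ is, by the previous paragraph, the blow-up of $U_t$ at the point $F(E_t)$, so $b_2\!\left(F^{-1}(U)\cap X_t\right)=b_2(U_t)+1$; comparing gives $b_2(U_t)=0$. Finally, a cyclic quotient surface singularity admitting a $\mathbb{Q}$-Gorenstein smoothing whose Milnor fibre is a rational homology ball is a Wahl singularity: it is of class $T$, and among singular cyclic quotient class-$T$ singularities the $A_r$'s have Milnor fibre with $b_2=r\ge1$, while those of type $\frac{1}{dn^2}(1,dna-1)$ with $d\ge2$ have $\mathbb{Q}$-Gorenstein-smoothing Milnor fibre with $b_2=d-1\ge1$; only $d=1$, a Wahl singularity, gives $b_2=0$ (cf.\ Wahl~\cite{Wahl-1981} and the classification of class-$T$ singularities recalled above). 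Hence $(Q\in Y)$ is Wahl.

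\emph{Main obstacle and an alternative.} The step I expect to require the most care is the global one: that $E$ is horizontal and that $E_t$ is irreducible and lies on the ray spanned by $C$ — this is exactly what legitimises the Castelnuovo-type argument and the comparison of Milnor fibres; the remaining ingredients (adjunction, the $b_2$-invariance of $\mathbb{Q}$-Gorenstein smoothings of Wahl germs, the classification of class-$T$ singularities) are standard. Alternatively — and this is the route that fits most naturally with the explicit machinery of Section~\ref{section:preliminaries} — both assertions can be read off directly from the numerical classification of mk1A and mk2A extremal neighbourhoods in Urz\'ua~\cite[Section~2]{Urzua-2016} (after HTU~\cite{HTU-2017}): in the divisorial case the Hirzebruch--Jung continued fraction of $(Q\in Y)$ is computed there explicitly and is of Wahl type, and the induced map on general fibres is manifestly the blow-down of a single $(-1)$-curve.
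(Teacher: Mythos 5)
The paper offers no proof of Proposition~\ref{proposition:divisorial-contraction-for-mk12A}: it is imported wholesale by citation to Urz{\'u}a~\cite[Proposition~2.8]{Urzua-2016} (itself resting on HTU~\cite{HTU-2017} and Koll{\'a}r--Shepherd-Barron~\cite{Kollar-Shepherd-Barron-1988}), so any proof you give is by definition a different route from the paper's. Your reconstruction is essentially sound and in fact mirrors the standard argument behind the cited result: the fibrewise Castelnuovo step (a $K$-negative, self-negative irreducible curve on a smooth surface is a $(-1)$-curve) gives the second assertion, and the comparison of $b_2$ of the two Milnor fibres, together with the KSB classification of class-$T$ cyclic quotient singularities and Wahl's Milnor-number computation~\cite{Wahl-1981}, forces $d=1$ and hence a Wahl singularity. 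Two points deserve slightly more care than you give them. First, $E_t^2<0$ is not automatic from irreducibility; it follows because $E_t$ is contracted by the proper birational morphism $X_t\to Y_t$ of normal surfaces, so the contracted configuration is negative definite (Mumford). Second, you assume $E_t=E\cap X_t$ is irreducible; a priori the general fibre of the irreducible horizontal divisor $E\to\mathbb{D}$ could have $k\ge 1$ components, each a $(-1)$-curve by your adjunction argument. This does not break the proof --- your Mayer--Vietoris count then reads $1=b_2(U_t)+k$ with $b_2(U_t)\ge 0$, which simultaneously forces $k=1$ and $b_2(U_t)=0$ --- but as written the irreducibility is asserted rather than derived, and it is cleaner to let the $b_2$ identity deliver it. Your closing alternative (reading both assertions off the explicit mk1A/mk2A numerics) is actually the approach closest in spirit to the source the paper cites; what your longer argument buys is independence from those explicit Hirzebruch--Jung computations.
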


We will use the notation in Urz{\'u}a~\cite[Subsection 2.1]{Urzua-2016} for the extremal neighborhoods mk1a and mk2A. In particular a mk1A will be presented by $[e_1, \dotsc, \overline{e_i}, \dotsc, e_s]$ where the Wahl singularity is $\frac{m^2}{ma-1}=[e_1, \dotsc, e_i, \dotsc, e_s]$. For a mk2A we will write $$[f_{s_2}, \dotsc, f_1] - [e_1, \dotsc, e_{s_1}], $$ where $\frac{m_2^2}{m_2a_2-1}=[f_1, \dotsc, f_{s_2}]$ and $\frac{m_1^2}{m_1a_1-1}=[e_1, \dotsc, e_{s_1}]$ correspond to the two Wahl singularities. For the flip, the extremal P-resolution will be represented by $$[f_{s_2}, \dotsc, f_1] - c - [e_1, \dotsc, e_{s_1}],$$ where $\frac{{m'}_2^2}{{m'}_2 {a'}_2-1}=[f_1, \dotsc, f_{s_2}]$ and $\frac{{m'}_1^2}{{m'}_1{a'}_1-1}=[e_1, \dotsc, e_{s_1}]$ are the Wahl singularities. Precise terminology and numerics can be found in Urz{\'u}a~\cite[Subsection 2.1]{Urzua-2016}.

%------------------------------------------------------------------------------
\subsection{Mori sequences}
\label{seciton:Mori-sequences}

It is proved in HTU~\cite{HTU-2017} that for a fixed extremal P-resolution $X^+$, we have a family of flipping extremal neighborhoods of type mk1A and mk2A whose flip is a $\mathbb Q$-Gorenstein smoothing of $X^+$. It is also proved that for a fixed Wahl singularity $(Q \in Y)$, we have a family of divisorial extremal neighborhoods of type
mk1A and mk2A whose divisorial contraction is a $\mathbb Q$-Gorenstein smoothing of $(Q \in Y)$. These families can be described numerically via \emph{Mori sequences}. In these families there is an invariant $\delta \in \mathbb Z_{>0}$ which can be read from the intersection of the flipping curve with the canonical class. If $\delta=1$, then the Mori sequence is finite. Otherwise the Mori sequence is infinite.

We summarize briefly HTU~\cite[Subsection~3.3]{HTU-2017}. See also  Urz{\'u}a~\cite[Section~2]{Urzua-2016}.

Let $\mathbb{E}_1$ be an mk2A with Wahl singularities defined by pairs $(m_1, a_1)$ and $(m_2, a_2)$ with $m_2 > m_1$, that is, an mk2A with two Wahl singularities of type $\frac{1}{m_1^2}(1, m_1a_1-1)$ and $\frac{1}{m_2^2}(1, m_2a_2-1)$. Here we also allow the mk1A for special case $m_1=a_1=1$. Let $\delta=m_2a_1+m_1a_2-m_1m_2$. Since $K_X \cdot C = -\delta/(m_1m_2) < 0$, we have $\delta \ge 1$.

Assume that $\delta m_1 - m_2 \le 0$ from now on. Such an mk2A is called an \emph{initial} mk2A in HTU~\cite{HTU-2017}. We first define two sequences $d(i)$ and $c(i)$ (called the \emph{Mori recursions}) as follows: For $i \ge 2$,
\begin{align*}
&d(1)=m_1, \quad d(2)=m_2, \quad d(i+1)=\delta d(i)-d(i-1), \\
&c(1)=a_1, \quad c(2)=m_2-a_2, \quad c(i+1)=\delta c(i)-c(i-1).
\end{align*}

\begin{definition}[Mori sequence for $\delta > 1$]
If $\delta > 1$, then a \emph{Mori sequence} for an initial mk2A $\mathbb{E}_1$ is a sequence of mk2A's $\mathbb{E}_i$ with Wahl singularities defined by the pairs $(m_i, a_i)$ and $(m_{i+1}, a_{i+1})$ where
\begin{equation*}
m_{i+1}=d(i+1), a_{i+1}=d(i+1)-c(i+1), \quad m_i=d(i), a_i=c(i).
\end{equation*}
Notice that $m_{i+1} > m_i$.
\end{definition}

\begin{definition}[Mori sequence for $\delta=1$]
If $\delta=1$, then a \emph{Mori sequence} for an initial mk2A $\mathbb{E}_1$ consists of one more mk2A $\mathbb{E}_2$ defined by the pairs $(m_2, a_2)$ and $(m_3, a_3)$, where $m_2=d(2)$, $a_2=c(2)$, and $m_3=d(2)-d(1)$, $a_3=d(2)-d(1)+c(1)-c(2)$.
\end{definition}

\begin{proposition}[{HTU~\cite[Proposition~3.15, Theorem~3.20]{HTU-2017}}]\label{proposition:flip-for-Mori-sequence}
If $\delta m_1 - m_2 < 0$, then the mk2A $\mathbb{E}_i$'s ($i \ge 1$) in the Mori sequence of an initial mk2A $\mathbb{E}_1$ are of flipping type sharing the same $\delta$, $\Omega$, $\Delta$ associated to $\mathbb{E}_1$ and, after flipping each $\mathbb{E}_i$, they have the same extremal $P$-resolution $X^+$ admitting two Wahl singularities defined by the pairs $(m_1', a_1')$ and $(m_2', a_2')$ where $m_2'=m_1$, $a_2'=m_1-a_1$, and $m_1'=m_2-\delta m_1$, $a_1' \equiv m_2-a_2 -\delta a_1 \pmod{m_1'}$. Here, in case of $m_1=a_1=1$, we set $a_2'=1$.
\end{proposition}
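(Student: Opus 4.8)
The plan is to derive the statement from the two facts it cites, HTU~\cite[Proposition~3.15, Theorem~3.20]{HTU-2017}, by matching the abstract \emph{Mori sequence} of $\mathbb{E}_1$ defined here with the one that HTU attach to an extremal $P$-resolution. First I would run HTU~\cite[Proposition~3.15]{HTU-2017} on the initial mk2A $\mathbb{E}_1$: the hypothesis $\delta m_1 - m_2 < 0$ is exactly the condition that forces $\mathbb{E}_1$ to be of flipping type (the borderline case $\delta m_1 - m_2 = 0$ being the divisorial one, where $(Q \in Y)$ is Wahl, cf.\ Proposition~\ref{proposition:divisorial-contraction-for-mk12A}), and it produces a flip whose extremal $P$-resolution $X^+$ carries the two Wahl singularities $(m_1', a_1')$ and $(m_2', a_2')$ with $m_2' = m_1$, $a_2' = m_1 - a_1$, $m_1' = m_2 - \delta m_1$ and $a_1' \equiv m_2 - a_2 - \delta a_1 \pmod{m_1'}$, together with the invariants $\delta, \Omega, \Delta$. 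Then HTU~\cite[Theorem~3.20]{HTU-2017} describes \emph{all} flipping extremal neighborhoods whose flip is this $X^+$: they are organized into a Mori sequence governed by the three-term recursion $x(i+1) = \delta\,x(i) - x(i-1)$. So the whole statement reduces to checking that HTU's Mori sequence for $X^+$ is precisely the sequence $\mathbb{E}_1, \mathbb{E}_2, \dots$ defined here from $\mathbb{E}_1$.

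The one genuinely self-contained computation needed for that identification is that all the $\mathbb{E}_i$ carry the \emph{same} invariant $\delta$, so that the recursion is well posed. Writing $\delta_i := m_{i+1}a_i + m_i a_{i+1} - m_i m_{i+1}$ and substituting $m_i = d(i)$, $a_i = c(i)$, $m_{i+1} = d(i+1)$, $a_{i+1} = d(i+1) - c(i+1)$, the $d(i)d(i+1)$ terms cancel and one gets $\delta_i = d(i+1)c(i) - d(i)c(i+1)$. Since $d$ and $c$ obey the same recursion $x(i+1) = \delta\,x(i) - x(i-1)$, the discrete Wronskian $W(i) := d(i+1)c(i) - d(i)c(i+1)$ is constant: expanding $W(i)$ through the recursions for $d(i+1)$ and $c(i+1)$, the $\delta\,d(i)c(i)$ terms cancel and one is left with $d(i)c(i-1) - d(i-1)c(i) = W(i-1)$, whence $\delta_i = W(1) = m_2 a_1 - m_1(m_2 - a_2) = \delta$ for all $i$. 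The same identity with $i=1$, reading the terminal mk2A of the $\delta = 1$ sequence as the pair $(m_2, c(2)), (m_3, a_3)$, disposes of the finite case. It also makes transparent why the bookkeeping is consistent at each junction: the two ``readings'' of the shared Wahl pair have $a$-values summing to $m_{i+1}$, and for $a + a' = n$ the singularities $\frac{1}{n^2}(1, na - 1)$ and $\frac{1}{n^2}(1, na' - 1)$ coincide because $(na-1)(na'-1) \equiv 1 \pmod{n^2}$. Finally, since every $\mathbb{E}_i$ flips to the \emph{same} $X^+$, they all contract their flipping curve to the same cyclic quotient singularity $(Q \in Y)$ — the contraction of $C^+ \subset X^+$ — and hence share $\Omega, \Delta$ as well.

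Putting this together: $\mathbb{E}_1$ is flipping with flip $X^+$ by HTU~\cite[Proposition~3.15]{HTU-2017}; its Mori sequence, as defined here, is the Mori sequence of $X^+$ (same $\delta$ by the Wronskian computation, same seed), so by HTU~\cite[Theorem~3.20]{HTU-2017} each $\mathbb{E}_i$ is flipping with flip $X^+$; and the Wahl data of $X^+$, computed once from $\mathbb{E}_1$, gives the stated formulas. The step I expect to be the main obstacle — and the one I would not attempt to reprove — is HTU's structure/existence statement itself: that the abstract numerical recursion is realized by genuine flipping extremal neighborhoods all sitting over one extremal $P$-resolution, which is the content of the universal family of extremal neighborhoods in HTU~\cite{HTU-2017}.
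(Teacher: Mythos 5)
The paper states this proposition purely as an import from HTU~\cite[Proposition~3.15, Theorem~3.20]{HTU-2017} and supplies no proof of its own, so your reduction to exactly those two results is the paper's (implicit) approach. Your added discrete-Wronskian computation --- that $\delta_i = d(i+1)c(i)-d(i)c(i+1)$ is constant under the recursion $x(i+1)=\delta x(i)-x(i-1)$, hence equals $m_2a_1+m_1a_2-m_1m_2=\delta$ for all $i$ --- is correct and in fact supplies a consistency check that the paper omits.
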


\begin{proposition}[{HTU~\cite[Proposition~3.13]{HTU-2017}}]
If $\delta m_1 - m_2=0$, then $\mathbb{E}_i$'s are of divisorial type with $m_1=\delta$, $m_2=\delta^2$, and $a_2=\delta^2-(\delta a_1 -1)$. By Proposition~\ref{proposition:divisorial-contraction-for-mk12A}, the contraction $\mathcal{X}_i \to \mathcal{Y}$ over $\mathbb{D}$ corresponding to $\mathbb{E}_i$ induces the blow down $X_{i,t} \to Y_t$ over $0 \neq t \in \mathbb{D}$ of a $(-1)$-curve between smooth fibers.
\end{proposition}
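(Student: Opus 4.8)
The plan is to prove three things, in order: (i) the numerical identities $m_1=\delta$, $m_2=\delta^2$, $a_2=\delta^2-(\delta a_1-1)$ for the initial mk2A $\mathbb{E}_1$; (ii) that every $\mathbb{E}_i$ in the Mori sequence is of divisorial type; and (iii) the blow-down description, which will be immediate from Proposition~\ref{proposition:divisorial-contraction-for-mk12A} once (ii) is known. So the content lies in (i) and (ii), and (ii) will use (i).

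\emph{The numerics.} The hypothesis is $m_2=\delta m_1$. Substituting it into $\delta=m_2a_1+m_1a_2-m_1m_2$ gives $\delta=m_1(\delta a_1+a_2-\delta m_1)$, so $m_1\mid\delta$; writing $\delta=m_1t$ with $t\ge 1$ one solves out $m_2=m_1^2t$ and $a_2=t(1+m_1^2-m_1a_1)$. Since every prime divisor of $m_1$ is coprime to $1+m_1^2-m_1a_1$, we get $\gcd(m_1^2,1+m_1^2-m_1a_1)=1$, hence $\gcd(m_2,a_2)=t$; as $(m_2,a_2)$ is a Wahl pair this forces $t=1$. Therefore $\delta=m_1\ge 2$ (in particular the mk1A special case $m_1=a_1=1$ cannot occur here), $m_2=\delta^2$, and $a_2=1+\delta(\delta-a_1)=\delta^2-(\delta a_1-1)$; one checks $0<a_2<m_2$ and $\gcd(m_2,a_2)=1$ for consistency. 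I record that $m_2-a_2=\delta a_1-1$, so, using $\tfrac{1}{n^2}(1,na-1)\cong\tfrac{1}{n^2}(1,n(n-a)-1)$, the Wahl singularity defined by $(m_2,a_2)$ is the one defined by $(m_1^2,m_1a_1-1)$, whose Hirzebruch--Jung string is $\tfrac{m_1^4}{m_1^3a_1-m_1^2-1}$.

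\emph{Divisoriality.} I would identify the minimal resolution $\widetilde{X}$ of the central surface of $\mathbb{E}_1$. Using the numerical description of mk2A's in Urz{\'u}a~\cite[Subsection~2.1]{Urzua-2016} together with $\delta m_1=m_2$ (equivalently $K_X\cdot C=-\delta/(m_1m_2)$ and adjunction on $C\cong\mathbb{P}^1$), one checks that $\widetilde{X}$ is the linear chain consisting of the Hirzebruch--Jung string of $\tfrac{m_2^2}{m_2a_2-1}$, then a single $(-1)$-curve $C$, then the string of $\tfrac{m_1^2}{m_1a_1-1}$, oriented as in Corollary~\ref{corollray:[a]->[T]-[a]} applied to $(n,a)=(m_1^2,m_1a_1-1)$ (for which $\tfrac{n}{a}=\tfrac{m_1^2}{m_1a_1-1}$ is the string of the Wahl singularity $(m_1,a_1)$, while $\tfrac{n^2}{na-1}$ is the string of $(m_2,a_2)$ by (i)). Contracting $C$ and iterating Hirzebruch--Jung reductions — precisely the identity $[b_q,\dots,b_1,1,a_1,\dots,a_p]=0$ behind Corollary~\ref{corollray:[a]->[T]-[a]} — collapses $\widetilde{X}$ onto the string $[e_1,\dots,e_s]=\tfrac{m_1^2}{m_1a_1-1}$. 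Hence the curve $C$ of $\mathbb{E}_1$ contracts to the Wahl singularity $(Q\in Y)$ defined by $(m_1,a_1)$. Since the target is a Wahl singularity, $\mathbb{E}_1$ is divisorial, not flipping: a flipping $\mathbb{E}_1$ would, by Proposition~\ref{proposition:flip-for-Mori-sequence}, produce an extremal $P$-resolution with a Wahl pair $(m_1',a_1')$ satisfying $m_1'=m_2-\delta m_1=0$, which is absurd; equivalently one invokes the mk1A/mk2A flipping-versus-divisorial dichotomy of Urz{\'u}a~\cite[Section~2]{Urzua-2016}. Running the identical argument with $(m_i,a_i)=(d(i),c(i))$ and $(m_{i+1},a_{i+1})=(d(i+1),d(i+1)-c(i+1))$ shows that every $\mathbb{E}_i$ contracts its curve to the same $(Q\in Y)$ and is divisorial — note that for $i\ge 2$ the mk2A $\mathbb{E}_i$ is no longer an initial one, yet it is still of divisorial type. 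Finally, Proposition~\ref{proposition:divisorial-contraction-for-mk12A} applied to each $\mathbb{E}_i$ yields the blow-down of a $(-1)$-curve $X_{i,t}\to Y_t$ between smooth fibers, which is (iii).

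The arithmetic in (i) is routine; the real obstacle is the continued-fraction bookkeeping in (ii): pinning down that $C$ is a $(-1)$-curve in $\widetilde{X}$ and orienting the two Wahl strings so that the collapse matches Corollary~\ref{corollray:[a]->[T]-[a]} exactly, and turning the flipping-versus-divisorial alternative into a rigorous argument rather than reading it off the degeneration $m_1'=0$. This is where one must use the precise mk2A numerics of Urz{\'u}a~\cite[Subsection~2.1]{Urzua-2016} (following HTU~\cite{HTU-2017}).
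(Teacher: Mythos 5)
The paper offers no proof of this proposition at all: it is quoted from HTU~\cite[Proposition~3.13]{HTU-2017}, so any argument you supply is necessarily a different route from the paper's. Your part (i) is correct and self-contained: substituting $m_2=\delta m_1$ into $\delta=m_2a_1+m_1a_2-m_1m_2$ gives $\delta=m_1(\delta a_1+a_2-\delta m_1)$, the coprimality of the Wahl pair $(m_2,a_2)$ forces the auxiliary factor $t$ to be $1$, and the identities $m_1=\delta\ge 2$, $m_2=\delta^2$, $a_2=\delta^2-(\delta a_1-1)$ follow (your computation is consistent with Example~\ref{example:mori-sequence-divisorial}, where the initial divisorial mk2A over $\frac14(1,1)$ is $[6,2,2]-[4]$ with $(m_1,a_1)=(2,1)$, $(m_2,a_2)=(4,3)$, $\delta=2$). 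The identification of the central fiber via Corollary~\ref{corollray:[a]->[T]-[a]}, using $m_2-a_2=\delta a_1-1$ to match orientations, is also correct and is exactly how Theorem~\ref{theorem:main-sharp} builds these neighborhoods.

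The genuine gap is in your divisoriality step. The argument ``a flipping $\mathbb{E}_1$ would, by Proposition~\ref{proposition:flip-for-Mori-sequence}, produce $m_1'=m_2-\delta m_1=0$'' is circular: that proposition is proved under the hypothesis $\delta m_1-m_2<0$, which is precisely what fails here, so its formula for $m_1'$ cannot be invoked to refute the flipping alternative. Likewise, ``the target is a Wahl singularity, hence divisorial'' runs Proposition~\ref{proposition:divisorial-contraction-for-mk12A} backwards; that proposition is the converse implication. Falling back on ``the flipping-versus-divisorial dichotomy of Urz\'ua/HTU'' amounts to quoting the statement being proved. To close the gap with the tools actually available in this paper, compare general fibers: your central-fiber computation shows $C$ contracts to the Wahl singularity $(Q\in Y)$ defined by $(m_1,a_1)$; since $F$ is a $K$-negative extremal contraction, $K_{\mathcal Y}$ is $\mathbb Q$-Cartier, so $\mathcal Y\to\mathbb D$ is a $\mathbb Q$-Gorenstein smoothing of a Wahl singularity and $b_2(Y_t)=0$, whereas $b_2(X_{i,t})=1$ (one curve in the central fiber, two Wahl singularities each of Milnor number zero). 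Hence $F$ cannot be an isomorphism on general fibers, its exceptional set is two-dimensional, and each $\mathbb{E}_i$ is divisorial. With that replacement (applied verbatim to every $(m_i,a_i)$, $(m_{i+1},a_{i+1})$ in the Mori recursion), your parts (i)--(iii) assemble into a complete proof.
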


\begin{example}[{Mori sequence of a flipping family for $\delta > 1$; cf. Urz{\'u}a~\cite[Example~2.14]{Urzua-2016}}]
\label{example:mori-sequence-flipping-delta>1}
Let $(Q \in Y)$ be the cyclic quotient surface singularity of type $\frac{1}{11}(1,3)$. Consider the extremal $P$-resolution $X^+ \to Y$ defined by $[4]-3$. So we have $m_1'=1$, $a_1'=1$, and $m_2'=2$, $a_2'=1$, and $\delta=3$. There are two corresponding initial flipping mk2A's: One is defined by the pairs $(1,1)$ and $(5,3)$ and the other is defined by $(2,1)$ and $(7,5)$. Therefore the first three entries of each Mori sequences are defined by the following pairs:
\begin{align*}
&\mathbb{E}_1: (1,1), (5,3),\
\mathbb{E}_2: (5,2), (14,9),\
\mathbb{E}_3: (14,5), (37,24), \dotsc \\
&\mathbb{E}_1: (2,1), (7,5),\
\mathbb{E}_2: (7,2), (19,14),\
\mathbb{E}_3: (19,5), (50,37), \dotsc
\end{align*}
Notice that two Wahl singularities defined by $(n,a)$ and $(n,n-a)$ represent the same singularity. So numerical data of any mk1A and any mk2A associated to $[4]-3$ can be read from
\begin{equation*}
\dotsb-[3,7,5,\overline{2},2,2,2,3,2]-[3,7,2,\overline{2},3,2]-[3,5,\overline{2}]-\varnothing
\end{equation*}
and
\begin{equation*}
\dotsb-[4,7,5,\overline{2},2,2,2,3,2,2]-[4,7,2,\overline{2},3,2,2]-[4,5,\overline{2},2]-[4],
\end{equation*}
where $\varnothing$ represents a smooth point (that is, the case of $m_1=a_1=1$ in the initial mk2A). These two Mori sequences provide the numerical data of all flipping extremal neighborhoods whose flip have $[4]-3$ as central fiber. For example, the mk2A's $[3,7,2,2,3,2]-[3,5,2]$ and $[4,7,2,2,3,2,2]-[4,5,2,2]$ or the mk1A $[4,7,5,\overline{2},2,2,2,3,2,2]$ have flips whose central fiber is $[4]-3$.
\end{example}

\begin{example}[Mori sequence of a flipping family for $\delta=1$]
\label{example:mori-sequence-flipping-delta=1}
Let $(Q \in Y)$ be the cyclic quotient surface singularity of type $\frac{1}{13}(1,3)$. Consider the extremal $P$-resolution $X^+ \to Y$ defined by $[5,2]-2$. Then $\delta=1$. There are two initial flipping mk2A's; but, their Mori sequences coincide. The numerical data of any mk1A and any mk2A associated to $[5,2]-2$ can be read from
\begin{equation*}
[2,5]-[2,2,\overline{6}]-\varnothing.
\end{equation*}
\end{example}

\begin{example}[{Mori sequence of a divisorial family; Urz{\'u}a~\cite[Example~2.13]{Urzua-2016}}]
\label{example:mori-sequence-divisorial}
Let $(Q \in \mathcal{Y})$ be the Wahl singularity $\frac{1}{4}(1,1)$. Then the numerical data of any mk1A and mk2A of divisorial type associated to $(Q \in \mathcal{Y})$ is as follows:
\begin{equation*}
\dotsb-[10,\overline{2},2,2,2,2,2]-[8,\overline{2},2,2,2]-[6,\overline{2},2]-[4].
\end{equation*}

\end{example}

A flip which appears frequently in this paper is the following.

\begin{proposition}[cf.~{Urz{\'u}a~\cite[Proposition~2.15]{Urzua-2016}}]
\label{proposition:usual-flip}
Let $n^2/(na-1)=[e_1,\dotsc,e_s]$. Then the mk1A $[e_1, \dotsc, e_{s-1}, \overline{e_s}]$ is an initial flipping mk1A with $\delta=n-a$. If $e_j = 2$ for all $i_0 < j \le s$ and $e_{i_0} \ge 3$, then the data for the flip $X^+$ is
\begin{equation*}
e_1 - [e_2, \dotsc, e_{i_0}-1].
\end{equation*}
\end{proposition}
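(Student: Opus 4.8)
The plan is to recognize $[e_1,\dots,e_{s-1},\overline{e_s}]$ as an initial flipping mk1A, to read off its flip from Proposition~\ref{proposition:flip-for-Mori-sequence}, and then to put the resulting extremal $P$-resolution into the asserted normal form by a short Hirzebruch--Jung continued fraction computation using the hypothesis $e_{i_0+1}=\dots=e_s=2$. First I would unwind the numerics of an mk1A from Urz\'ua~\cite[Subsection~2.1]{Urzua-2016}: an mk1A is the degenerate case of an mk2A in which $m_1=a_1=1$ and there is a single Wahl singularity $\frac{1}{m_2^2}(1,m_2a_2-1)$, and for $[e_1,\dots,e_{s-1},\overline{e_s}]$ this singularity is $\frac{n^2}{na-1}=[e_1,\dots,e_s]$ read from the barred end, i.e.\ as $[e_s,\dots,e_1]$. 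Since reversing a Hirzebruch--Jung continued fraction inverts the second entry modulo the first and $(na-1)\bigl(n(n-a)-1\bigr)\equiv 1\pmod{n^2}$, one gets $[e_s,\dots,e_1]=\frac{n^2}{n(n-a)-1}$, so $(m_2,a_2)=(n,n-a)$ and
\[
\delta \;=\; m_2a_1+m_1a_2-m_1m_2 \;=\; n+(n-a)-n \;=\; n-a .
\]
(Alternatively $\delta=-n\,(K_X\cdot C)$ can be computed from the discrepancy of $E_s$ on the minimal resolution, where $\tilde C$ is the $(-1)$-curve meeting only $E_s$.) In particular $\delta m_1-m_2=(n-a)-n=-a<0$, so this mk1A is initial of flipping type in the sense of Proposition~\ref{proposition:flip-for-Mori-sequence}; this proves the first assertion.

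For the flip, Proposition~\ref{proposition:flip-for-Mori-sequence} tells us that $X^+$ has at most one Wahl singularity, namely $\frac{a^2}{aa_1'-1}$ with $m_1'=m_2-\delta m_1=n-(n-a)=a$ and $a_1'\equiv m_2-a_2-\delta a_1\equiv-n\pmod a$, the other side being a smooth point; so $X^+\to Y$ is the extremal $P$-resolution of the cyclic quotient singularity $Q\in Y$ obtained from $X$ by contracting $C$, with $C^+$ irreducible and meeting only that singularity. I would then compute $Q\in Y$ directly from the mk1A: on the minimal resolution of $X$ the $(-1)$-curve $\tilde C$ meets $E_s$, and contracting successively blows down $\tilde C,E_s,E_{s-1},\dots,E_{i_0+1}$ (each of $E_s,\dots,E_{i_0+1}$ having turned into a $(-1)$-curve because $e_{i_0+1}=\dots=e_s=2$), after which the $(-e_{i_0})$-curve has become a $(-(e_{i_0}-1))$-curve with $e_{i_0}-1\ge 2$; hence the Hirzebruch--Jung continued fraction of $Q\in Y$ is $[e_1,\dots,e_{i_0-1},e_{i_0}-1]$. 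On the other hand, the minimal resolution of $X^+$ is the chain obtained by attaching $C^+$ to the minimal resolution of $\frac{a^2}{aa_1'-1}$, so the Hirzebruch--Jung continued fraction of $Q\in Y$ is also $[c,W_0]$, where $-c$ is the self-intersection of $C^+$ (and $c\ge 2$, since $c=1$ would force $K_{X^+}\cdot C^+<0$ by a discrepancy computation) and $W_0$ is the Wahl chain of $\frac{a^2}{aa_1'-1}$. Because the Hirzebruch--Jung continued fraction of a cyclic quotient singularity is unique, comparing the two expressions gives $c=e_1$ and $W_0=[e_2,\dots,e_{i_0-1},e_{i_0}-1]$, i.e.\ $X^+=e_1-[e_2,\dots,e_{i_0}-1]$. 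The degenerate case $i_0=1$ corresponds exactly to $a=1$: there $W_0$ is empty, $X^+$ is smooth and equals the single $(-(e_1-1))$-curve, and $e_1=n+2\ge 4$; it is handled in the same way.

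The step I expect to be the main obstacle is the bookkeeping in the first paragraph: deciding which of the two ``sides'' of the mk1A plays the role of $(m_1,a_1)$ and which of $(m_2,a_2)$, so that one obtains $\delta=n-a$ rather than $\delta=a$. Once the orientation is fixed, the value of $\delta$, the description of the flip via Proposition~\ref{proposition:flip-for-Mori-sequence}, and the continued fraction identification of $Q\in Y$ are all routine verifications. One could alternatively avoid the final uniqueness comparison by matching the invariants $\delta,\Omega,\Delta$ of the mk1A against those attached in HTU~\cite{HTU-2017} to the extremal $P$-resolution $e_1-[e_2,\dots,e_{i_0}-1]$ and checking that this mk1A is the first member of the associated Mori sequence.
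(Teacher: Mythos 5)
The paper does not actually prove this proposition: it is stated with ``cf.\ Urz\'ua \cite[Proposition~2.15]{Urzua-2016}'' and no argument is given, so there is no in-paper proof to compare against. Your reconstruction is essentially correct and is the natural one: you correctly identify that the pair fed into the Mori recursion must be read from the end of the Wahl chain adjacent to the flipping curve, so that $(m_2,a_2)=(n,n-a)$ (via $(na-1)(n(n-a)-1)\equiv 1 \pmod{n^2}$) and hence $\delta=m_2a_1+m_1a_2-m_1m_2=n-a$ and $\delta m_1-m_2=-a<0$; this is exactly the bookkeeping point you flag, and it checks out against the paper's Example~\ref{example:mori-sequence-flipping-delta>1}, where $[3,5,\overline{2}]$ has $(n,a)=(5,2)$ and $\delta=3$. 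The identification of $(Q\in Y)$ by blowing down $\tilde C, E_s,\dotsc,E_{i_0+1}$ to get $[e_1,\dotsc,e_{i_0-1},e_{i_0}-1]$, and the values $m_1'=a$, $a_1'\equiv -n \pmod a$ from Proposition~\ref{proposition:flip-for-Mori-sequence}, are all right, as is your treatment of the degenerate case $i_0=1$ (where the central curve has self-intersection $-(e_1-1)$).

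The one soft spot is the final step, where you invoke ``uniqueness of the Hirzebruch--Jung continued fraction'' to conclude $c=e_1$ and $W_0=[e_2,\dotsc,e_{i_0}-1]$. A cyclic quotient singularity has \emph{two} Hirzebruch--Jung expansions, reverses of each other (corresponding to $\frac{1}{N}(1,A)\cong\frac{1}{N}(1,A')$ with $AA'\equiv 1$), so the comparison of $[e_1,\dotsc,e_{i_0}-1]$ with $[c,W_0]$ a priori allows the reversed matching $c=e_{i_0}-1$, $W_0=[e_{i_0-1},\dotsc,e_1]$. This is not a fatal gap, because you already know from Proposition~\ref{proposition:flip-for-Mori-sequence} that the Wahl singularity of $X^+$ has index $m_1'=a$, which rules out the wrong orientation except in symmetric cases; but you should say this explicitly (or follow the alternative you sketch at the end, matching $\delta$, $\Omega$, $\Delta$ against the data of $e_1-[e_2,\dotsc,e_{i_0}-1]$ as in HTU~\cite{HTU-2017}). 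With that sentence added, the argument is complete.
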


\begin{proposition}\label{proposition:delta=1}
Let $(Q \in Y)$ be a cyclic quotient surface singularity. Suppose that there is an extremal $P$-resolution $X^+ \to Y$ defined by $[a_s, \dotsc, a_1]-c$ with $[a_1,\dotsc,a_s]=m^2/(ma-1)$. Suppose that $\delta=1$. Then $c=2$ and $a=m-1$. Hence the extremal $P$-resolution is of the form
\begin{equation*}
[m+2,\underbrace{2,\dotsc,2}_{m-2}]-2
\end{equation*}
and any mk1A or mk2A corresponding to $X^+ \to Y$ can be read from:
\begin{equation*}
[\underbrace{2,\dotsc,2}_{m-2},m+2]-[\underbrace{2,\dotsc,2}_{m-1}, \overline{m+3}]-\varnothing.
\end{equation*}
\end{proposition}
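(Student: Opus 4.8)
The plan is to extract both $c$ and $a$ from the single hypothesis $\delta=1$, using that $\delta$ is read off from the intersection of the central curve with the canonical class, and then to read off the displayed continued fractions and Mori recursions.

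First I would compute $K_{X^+}\cdot C^+$ in two ways. Let $\rho\colon\widetilde{X^+}\to X^+$ be the minimal resolution; its exceptional locus over $Q$ is the linear chain $[a_s,\dots,a_1,c]$, where the strict transform $\bar C$ of $C^+$ has $\bar C^2=-c$ and meets exactly one curve $E_1$ of the chain $E_1,\dots,E_s$ resolving the Wahl singularity $\frac{1}{m^2}(1,ma-1)=[a_1,\dots,a_s]$. Writing $K_{\widetilde{X^+}}=\rho^*K_{X^+}+\sum_i d_i E_i$, the projection formula gives
\[
K_{X^+}\cdot C^+=\rho^*K_{X^+}\cdot\bar C=K_{\widetilde{X^+}}\cdot\bar C-\sum_i d_i\,(E_i\cdot\bar C)=(c-2)-d_1,
\]
since $E_1\cdot\bar C=1$ and $E_i\cdot\bar C=0$ for $i>1$. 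On the other hand, as $X^+$ carries a single Wahl singularity (of order $m$) and a smooth marked point, the numerics of extremal $P$-resolutions (Urz{\'u}a~\cite[Section~2]{Urzua-2016}) give $K_{X^+}\cdot C^+=\delta/m=1/m$; hence $d_1=(c-2)-\tfrac1m$. Since a Wahl singularity is log terminal but not canonical, $-1<d_1<0$, which forces $1+\tfrac1m<c<2+\tfrac1m$, so $c=2$ and $d_1=-\tfrac1m$.

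Next I would identify $a$. The discrepancy of the first component of the Hirzebruch--Jung chain of $\frac{m^2}{ma-1}$ (the one meeting $\bar C$) equals $-\tfrac{m-a}{m}$; this is the standard continued-fraction value for the discrepancy of an end component of a cyclic quotient singularity, as one checks e.g.\ on $\frac14(1,1)=[4]$. Comparing with $d_1=-\tfrac1m$ yields $a=m-1$, so that $\frac{m^2}{ma-1}=\frac{m^2}{m(m-1)-1}$; the elementary identity $\frac{m^2}{m-1}=[m+2,\underbrace{2,\dots,2}_{m-2}]$ together with Hirzebruch--Jung duality then gives $\frac{m^2}{m(m-1)-1}=[\underbrace{2,\dots,2}_{m-2},m+2]$, so the extremal $P$-resolution has the asserted form $[m+2,\underbrace{2,\dots,2}_{m-2}]-2$. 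Finally, to get the list of all mk1A's and mk2A's, I would feed $\delta=1$ and the Wahl pairs $(m,m-1)$, $(1,1)$ of $X^+$ into the Mori recursions: the initial object is the mk1A $\mathbb{E}_1=[\underbrace{2,\dots,2}_{m-1},\overline{m+3}]$, whose Wahl singularity $\frac{(m+1)^2}{(m+1)m-1}$ has $\delta=(m+1)-m=1$ by Proposition~\ref{proposition:usual-flip}; since $\delta=1$ the Mori sequence has exactly one further member $\mathbb{E}_2$, produced by $d(i+1)=d(i)-d(i-1)$ and $c(i+1)=c(i)-c(i-1)$, and reading these off gives precisely $[\underbrace{2,\dots,2}_{m-2},m+2]-[\underbrace{2,\dots,2}_{m-1},\overline{m+3}]-\varnothing$; by HTU~\cite{HTU-2017} this exhausts the family.

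The step I expect to be the main obstacle is pinning the discrepancy $d_1$: the bound $d_1>-1$ is what produces $c=2$, and the exact value $d_1=-\tfrac{m-a}{m}$ is what produces $a=m-1$, so the proof hinges on having both the relation $K_{X^+}\cdot C^+=\delta/(n_1n_2)$ (with $n_1,n_2$ the orders of the two marked points) and the end-component discrepancy cleanly available. An alternative route that avoids discrepancies entirely: since $X^+$ has a single Wahl singularity, it is the flip of an initial mk1A $[e_1,\dots,e_{s'-1},\overline{e_{s'}}]$ with $\frac{N^2}{Nb-1}=[e_1,\dots,e_{s'}]$; Proposition~\ref{proposition:usual-flip} gives $\delta=N-b$, so $\delta=1$ forces $b=N-1$, whence $[e_1,\dots,e_{s'}]=[\underbrace{2,\dots,2}_{N-2},N+2]$, $i_0=s'$, and the flip is $2-[\underbrace{2,\dots,2}_{N-3},N+1]$; with $m=N-1$ this is exactly $[m+2,\underbrace{2,\dots,2}_{m-2}]-2$, recovering $c=2$ and $a=m-1$.
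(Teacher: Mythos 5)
Your proposal is correct and lands on the same key identity as the paper: the paper's proof is the one-line computation $\delta = cm_1'm_2'-m_1'a_2'-m_2'a_1'=(c-1)m-a=1$ combined with $0<a<m$ to force $c=2$ and $a=m-1$, and your discrepancy calculation $K_{X^+}\cdot C^+=(c-2)-d_1=\delta/m$ with $-1<d_1=-\tfrac{m-a}{m}<0$ is just a self-contained re-derivation of that same formula, with the range of the discrepancy playing the role of the range of $a$. The remaining assertions are dismissed in the paper as following easily, so your explicit verification of the continued-fraction identities and of the two-term Mori sequence (and the alternative route through Proposition~\ref{proposition:usual-flip}) only adds detail consistent with the intended argument.
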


\begin{proof}
Since $\delta=cm_1'm_2'-m_1'a_2'-m_2'a_1'=(c-1)m-a=1$, we have $a=(c-1)m-1$. But $a < m$. Hence $c=2$ and $a=m-1$. Then the other assertions follow easily.
\end{proof}

%------------------------------------------------------------------------------
\section{Simple embeddings and MMP}
\label{section:simple-embeddings-MMP}

Let $V$ be a (sufficiently small) regular neighborhood of a linear chain of negative $2$-spheres $\cup_{i=1}^{r} B_i$ with $B_i \cdot B_i \le -2$, which may be assumed to be diffeomorphic to the minimal resolution of a germ of a cyclic quotient surface singularity $P$. Assume that there is a smoothly embedded rational homology ball $B_{p,q} \hookrightarrow V$.

An embedding $B_{p,q} \hookrightarrow V$ is said to be \emph{simple} if the corresponding rational blow-up $W = (V \setminus B_{p,q}) \cup C_{p,q}$ is obtained by a sequence of ordinary blow-ups $\pi \colon W \to V$, that is, $W = V \sharp k \overline{\mathbb{CP}^2}$ for some $k \ge 1$;  Khodorovskiy~\cite{Khodorovskiy-2012,Khodorovskiy-2014}. But in this section we consider a slightly more restricted situation.

\begin{definition}\label{definition:plainly-simple}
A simple embedding $B_{p,q} \hookrightarrow V$ is said to be \emph{plainly simple} if the negative $2$-spheres of $C_{p,q}$ in $W$ consist of the proper transforms of $B_i$'s and that of the exceptional $2$-spheres of $\pi \colon W \to V$.
\end{definition}

\begin{remark}
It is not necessary that a simple embedding is plainly simple. Recently, Owens~\cite{Owens-2017,Owens-2019} show the existence of simple embeddings of $B_{p,q}$'s in $\mathbb{CP}^2$ which are not symplectic. For $B_{5,2}$ one can show that it is not plainly simple.
\end{remark}

In this section we show that every plainly simple embedding of $B_{p,q}$ in $V$ comes from a finite sequence of antiflips applied to a trivial family over a disk $\mathbb D$; Theorem~\ref{theorem:main-derived-from-MMP}. But, in order to state the theorem precisely, we need to introduce some notions.

From now on we assume that there is a plainly simple embedding of $B_{p,q}$ in $V$.

Let $V_{\mathbb{C}}$ be a complex model for $V$. That is, $V_{\mathbb{C}}$ is the minimal resolution of $P$ (of type $\frac{1}{m}(1,b)$) with the exceptional divisors $B=\sum_{i=1}^{r} B_i$ such that $(V, \cup B_i)$ is diffeomorphic to $(V_{\mathbb{C}}, \cup B_i)$.

Suppose that the dual graph of $B$ is given by
\begin{equation}\label{equation:[b]}
\begin{tikzpicture}
%\node[bullet] (00) at (0,0) [label=above:{$-b_1$}] {};
\node[bullet] (10) at (1,0) [label=above:{$-b_1$},label=below:{$B_1$}] {};
\node[bullet] (20) at (2,0) [label=above:{$-b_2$},label=below:{$B_2$}] {};

\node[empty] (250) at (2.5,0) [] {};
\node[empty] (30) at (3,0) [] {};

\node[bullet] (350) at (3.5,0) [label=above:{$-b_{r-1}$},label=below:{$B_{r-1}$}] {};
\node[bullet] (450) at (4.5,0) [label=above:{$-b_r$},label=below:{$B_r$}] {};
%\node[bullet] (550) at (5.5,0) [label=above:{$-1$}] {};

%\draw [-] (00)--(10);
\draw [-] (10)--(20);
\draw [-] (20)--(250);
\draw [dotted] (20)--(350);
\draw [-] (30)--(350);
\draw [-] (350)--(450);
%\draw [-] (450)--(550);
\end{tikzpicture}
\end{equation}
where $m/b = [b_1, \dotsc, b_r]$.

We may take a sequence of blow-ups $\pi_{\mathbb{C}} \colon W_{\mathbb{C}} \to V_{\mathbb{C}}$ so that the dual graph of $W_{\mathbb{C}}$ is the one from $W$, that is, $\pi_{\mathbb{C}} \colon W_{\mathbb{C}} \to V_{\mathbb{C}}$ is the complex model of the blow-ups $\pi \colon W \to V$.

So there is a diffeomorphism of pairs between $(C_{p,q} \subset W)$ and $(C_{p,q} \subset W_{\mathbb{C}})$. Since $(B_{p,q} \subset V)$ is obtained by rationally blow-down $W$ along $C_{p,q}$, there is a smoothly embedded $B_{p,q}$ in $V_{\mathbb{C}}$ induced by the rational blow-down of $C_{p,q}$ in $W_{\mathbb{C}}$ so that there is a diffeomorphism of pairs
\begin{equation*}
(B_{p,q} \subset V_{\mathbb{C}}) \sim (B_{p,q} \subset V).
\end{equation*}

We denote by $E=\sum_{k=1}^{t} E_k$ the divisor in $W_{\mathbb{C}}$ consisting of the rational curves corresponding to $C_{p,q}$ in $W_{\mathbb{C}}$. We contract $E$ from $W_{\mathbb{C}}$ so that we have a singular surface $X$ with one Wahl singularity $x \in X$. Let $f \colon (X \subset \mathcal{X}) \to (0 \in \mathbb{D})$ be a deformation of $X$ that is induced by the $\mathbb{Q}$-Gorenstein smoothing of $x \in X$. A general fiber $X_t = f^{-1}(t)$ ($t \neq 0$) is just the rational blow-down of $W_{\mathbb{C}}$ along $C_{p,q}$. Therefore we have a diffeomorphism of pairs
\begin{equation}\label{equation:diffeomorphism-of-pairs}
(B_{p,q} \subset X_t) \sim (B_{p,q} \subset V_{\mathbb{C}}) \sim (B_{p,q} \subset V).
\end{equation}

We now compactify $\mathcal{X}$ as we did in PPSU~\cite[\S3]{PPSU-2018} so that we will transform $f \colon \mathcal{X} \to \mathbb{D}$ into a proper map $\overline{f} \colon \overline{\mathcal{X}} \to \mathbb{D}$  because the relative MMP may be applied to proper maps mostly.

Let $\mathbb{F}_1$ be the Hirzebruch surface with the negative section $S_0$ with $S_0 \cdot S_0 = -1$. Let $S_{\infty}$ be another section of $\mathbb{F}_1$ with $S_{\infty} \cdot S_{\infty} = 1$ and let $F$ be a fiber of $\mathbb{F}_1$, that is, $F \cdot F = 0$. We blow up $\mathbb{F}_1$ appropriately at $S_0 \cap F$ (including infinitely near points over there) so that we obtain the following chain of rational curves starting from the proper transform $\overline{S}_0$ of $S_0$ and ending with the proper transform $\overline{S}_{\infty}$ of $S_{\infty}$:
\begin{equation}\label{equation:blowing-up-F_1}
\begin{tikzpicture}
%\node[bullet] (00) at (0,0) [label=above:{$-b_1$}] {};
\node[bullet] (10) at (1,0) [label=above:{$-b_1$},label=below:{$B_1=\overline{S}_0$}] {};
\node[bullet] (20) at (2,0) [label=above:{$-b_2$},label=below:{$B_2$}] {};

\node[empty] (250) at (2.5,0) [] {};
\node[empty] (30) at (3,0) [] {};

\node[bullet] (350) at (3.5,0) [label=above:{$-b_{r-1}$},label=below:{$B_{r-1}$}] {};
\node[bullet] (450) at (4.5,0) [label=above:{$-b_r$},label=below:{$B_r$}] {};

\node[bullet] (550) at (5.5,0) [label=above:{$-1$}] {};

\node[bullet] (650) at (6.5,0) [label=above:{$-a_e$},label=below:{$A_e$}] {};
\node[empty] (70) at (7,0) [] {};
\node[empty] (750) at (7.5,0) [] {};
\node[bullet] (80) at (8,0) [label=above:{$-a_2$},label=below:{$A_2$}] {};
\node[bullet] (90) at (9,0) [label=above:{$1-a_1$},label=below:{$A_1$}] {};
\node[bullet] (100) at (10,0) [label=above:{$+1$},label=below:{$\overline{S}_{\infty}$}] {};

%\draw [-] (00)--(10);
\draw [-] (10)--(20);
\draw [-] (20)--(250);
\draw [dotted] (20)--(350);
\draw [-] (30)--(350);
\draw [-] (350)--(450);
\draw [-] (450)--(550);

\draw [-] (550)--(650);
\draw [-] (650)--(70);
\draw [dotted] (650)--(80);
\draw [-] (750)--(80);
\draw [-] (80)--(90);
\draw [-] (90)--(100);
\end{tikzpicture}
\end{equation}
where $m/(m-b)=[a_1, \dotsc, a_e]$. Let $\overline{V}_{\mathbb{C}}$ be the blown-up Hirzebruch surface $\mathbb{F}_1$ that contains the configuration of rational curves in Equation~\eqref{equation:blowing-up-F_1} above. Let $\overline{\pi}_{\mathbb{C}} \colon \overline{W}_{\mathbb{C}} \to \overline{V}_{\mathbb{C}}$ be the sequence of blow-ups that is the extension of $\pi_{\mathbb{C}} \colon W_{\mathbb{C}} \to V_{\mathbb{C}}$. We contract the divisor $E=\sum_{i=1}^{t} E_i$ in $\overline{W}_{\mathbb{C}}$ corresponding to $C_{p,q}$ so that we get a singular surface $\overline{X}$ with the Wahl singularity $x \in \overline{X}$. Note that
\begin{equation*}
X = \overline{X} \setminus (\cup_{i=1}^{e} A_i \cup \overline{S}_{\infty}).
\end{equation*}

We will show that the $\mathbb{Q}$-Gorenstein smoothing of $x \in \overline{X}$ can be extended to a deformation of $\overline{f} \colon \overline{\mathcal{X}} \to \mathbb{D}$ that preserves the curves $A_1, \dotsc, A_e$ and $\overline{S}_{\infty}$. For this, we contract again the linear chain $A=\cup_{j=1}^{e} A_j$ in $\overline{W}_{\mathbb{C}}$ and $\overline{X}$ to a singularity $Q$ respectively so that we get two singular surfaces $\widehat{W}_{\mathbb{C}}$ and $\widehat{X}$ with two singular points $y$ and $Q$.

\begin{lemma}\label{lemma:local-to-global-vanishes}
The obstruction to local-to-global deformations of $\widehat{X}$ vanishes. That is, $H^2(\widehat{X}, \sheaf{T}_{\widehat{X}})=0$, where $\sheaf{T}_{\widehat{X}}$ is the tangent sheaf of $\widehat{X}$.
\end{lemma}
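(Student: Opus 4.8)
The plan is to show $H^2(\widehat{X}, \sheaf{T}_{\widehat{X}}) = 0$ by exhibiting $\widehat{X}$ as a (possibly mildly singular) projective rational surface for which the obstruction space vanishes for cohomological reasons. First I would recall that $\widehat{X}$ is obtained from the blown-up Hirzebruch surface $\overline{W}_{\mathbb{C}}$ by contracting the two disjoint linear chains of rational curves $E = \cup E_i$ (to the Wahl singularity $x$) and $A = \cup A_j$ (to the cyclic quotient singularity $Q$); in particular $\widehat{X}$ is a projective rational surface with only quotient singularities, hence with rational singularities. The standard tool is that for a normal projective surface $\widehat{X}$ with quotient singularities, one has a local-to-global spectral sequence, and $H^2(\widehat{X}, \sheaf{T}_{\widehat{X}})$ is governed by $H^2$ of the reflexive tangent sheaf; since quotient singularities are isolated and $\sheaf{T}_{\widehat{X}}$ is a rank-two reflexive sheaf on a surface, Serre duality gives $H^2(\widehat{X}, \sheaf{T}_{\widehat{X}}) \cong \mathrm{Hom}(\sheaf{T}_{\widehat{X}}, \omega_{\widehat{X}})^\vee$, i.e. global sections of $\Omega^{[1]}_{\widehat{X}} \otimes \omega_{\widehat{X}}$ (the reflexive tensor). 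So it suffices to show $H^0(\widehat{X}, \Omega^{[1]}_{\widehat{X}} \otimes \omega_{\widehat{X}}) = 0$.

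Next I would pass to the minimal resolution. Let $\mu \colon \overline{W}_{\mathbb{C}} \to \widehat{X}$ be the resolution contracting $E \cup A$ (this is essentially the minimal resolution, as these are the exceptional chains of quotient singularities). Then $H^0(\widehat{X}, \Omega^{[1]}_{\widehat{X}} \otimes \omega_{\widehat{X}})$ injects into (in fact equals, by reflexivity and the fact that $\mu$ is an isomorphism in codimension one on the relevant loci) $H^0(\overline{W}_{\mathbb{C}}, \Omega^1_{\overline{W}_{\mathbb{C}}} \otimes \mu^*\omega_{\widehat{X}})$. Now $\mu^* \omega_{\widehat{X}} = \omega_{\overline{W}_{\mathbb{C}}}(-\Delta)$ for an effective $\mathbb{Q}$-divisor $\Delta$ supported on the exceptional chains (the discrepancy divisor, which is effective with coefficients in $(-1,0]$ negated, i.e. $-\Delta \le 0$, because quotient singularities are log-terminal). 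Thus it is enough to show $H^0(\overline{W}_{\mathbb{C}}, \Omega^1_{\overline{W}_{\mathbb{C}}} \otimes \omega_{\overline{W}_{\mathbb{C}}} \otimes \mathcal{O}(-\lfloor \Delta \rfloor)) = 0$. But $\overline{W}_{\mathbb{C}}$ is a smooth rational surface, so $H^0(\overline{W}_{\mathbb{C}}, \Omega^1 \otimes \omega)$ — let alone a subsheaf twisted down by an effective divisor — vanishes: indeed $\Omega^1_{\overline{W}_{\mathbb{C}}} \otimes \omega_{\overline{W}_{\mathbb{C}}} \cong \mathcal{T}_{\overline{W}_{\mathbb{C}}}^\vee \otimes \omega \cong \Omega^1 \otimes \omega$, and a nonzero section would produce, after wedging and using rationality ($h^{1,0} = h^{2,0} = 0$, $K_{\overline{W}_{\mathbb{C}}}$ not effective since the surface is rational), a contradiction; more directly, $H^0(\Omega^1 \otimes \omega) = H^2(\mathcal{T})^\vee = 0$ on any rational surface by Serre duality together with the fact that $H^2(\mathcal{T}_S) = 0$ for $S$ rational (deformations of rational surfaces are unobstructed).

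The main obstacle is the bookkeeping in the middle step: one must be careful that the reflexive pullback formula $H^2(\widehat{X}, \sheaf{T}_{\widehat{X}}) \hookrightarrow H^2(\overline{W}_{\mathbb{C}}, \mathcal{T}_{\overline{W}_{\mathbb{C}}}(-\log(E+A)))$ or the dual version holds — the clean statement is that for a surface with quotient singularities and resolution $\mu$ with reduced exceptional divisor $D$, one has $\mu_* \mathcal{T}_{\overline{W}_{\mathbb{C}}}(-\log D) = \sheaf{T}_{\widehat{X}}$ and $R^1\mu_* \mathcal{T}_{\overline{W}_{\mathbb{C}}}(-\log D) = 0$, so that $H^2(\widehat{X}, \sheaf{T}_{\widehat{X}}) = H^2(\overline{W}_{\mathbb{C}}, \mathcal{T}_{\overline{W}_{\mathbb{C}}}(-\log D))$. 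This vanishes because the exact sequence $0 \to \mathcal{T}_{\overline{W}_{\mathbb{C}}}(-\log D) \to \mathcal{T}_{\overline{W}_{\mathbb{C}}} \to \bigoplus \mathcal{N}_{D_i} \to \cdots$ (or rather the residue sequence $0 \to \mathcal{T}(-\log D) \to \mathcal{T} \to \oplus_i \mathcal{O}_{D_i}(D_i + \text{nodes}) \to 0$ for a chain $D$) reduces $H^2$ to $H^2(\mathcal{T}_{\overline{W}_{\mathbb{C}}}) = 0$ (rational surface) modulo an $H^1$ of the normal-bundle-type quotient, which is a sum of $H^1$ of line bundles on a tree of $\mathbb{P}^1$'s and hence vanishes. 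I would cite PPSU~\cite{PPSU-2018} and the analogous computation in Urz{\'u}a~\cite{Urzua-2016} for the template, since the argument there for global vanishing of the obstruction on the compactified central fiber is essentially identical, the only new input being the presence of the extra chain $A$, which only makes $D$ larger and does not affect any of the vanishings invoked.
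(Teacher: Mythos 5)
Your reduction of the problem to the minimal resolution is the standard one and is fine: for quotient singularities one indeed has $\mu_*\mathcal{T}_{\overline{W}_{\mathbb{C}}}(-\log D)=\sheaf{T}_{\widehat{X}}$ and $R^1\mu_*\mathcal{T}_{\overline{W}_{\mathbb{C}}}(-\log D)=0$ with $D=E+A$ reduced, so $H^2(\widehat{X},\sheaf{T}_{\widehat{X}})\cong H^2(\overline{W}_{\mathbb{C}},\mathcal{T}_{\overline{W}_{\mathbb{C}}}(-\log D))$. The gap is in the final vanishing, in both of your variants. In the residue-sequence variant, the quotient of $\mathcal{T}(-\log D)\hookrightarrow\mathcal{T}$ is $\bigoplus_i \mathcal{O}_{D_i}(D_i)$, a line bundle of degree $D_i^2\le -2$ on each $D_i\cong\mathbb{P}^1$; since $H^1(\mathbb{P}^1,\mathcal{O}(d))\neq 0$ for $d\le -2$, your claim that this $H^1$ ``vanishes because it is a sum of $H^1$ of line bundles on a tree of $\mathbb{P}^1$'s'' is false. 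What the sequence actually gives is $H^2(\mathcal{T}(-\log D))=\operatorname{coker}\bigl(H^1(\mathcal{T}_{\overline{W}_{\mathbb{C}}})\to\bigoplus_i H^1(\mathcal{O}_{D_i}(D_i))\bigr)$, and proving this cokernel is zero is precisely the nontrivial content of the lemma. In the Serre-duality variant the same contribution reappears as log poles: the dual of $H^2(\mathcal{T}(-\log D))$ is $H^0(\Omega^1_{\overline{W}_{\mathbb{C}}}(\log D)\otimes\omega_{\overline{W}_{\mathbb{C}}})$, not $H^0(\Omega^1\otimes\omega)$, and sections can live entirely in the residue part along $D$. (You also have the discrepancy sign reversed: for quotient singularities $\mu^*\omega_{\widehat{X}}=\omega_{\overline{W}_{\mathbb{C}}}+\Delta$ with $\Delta$ an \emph{effective} $\mathbb{Q}$-divisor supported on $D$, so the twist enlarges the space of sections rather than shrinking it, and the claimed no-poles extension across $D$ is exactly what needs proof.) If your argument were valid it would show that \emph{every} projective rational surface with quotient singularities has vanishing local-to-global obstruction, with no reference to the configuration $D$ — which is not true and would render the computations in Lee--Park-type constructions vacuous.

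The paper's own proof is simply the citation to PPSU~\cite[Lemma~3.3, Proposition~3.4]{PPSU-2018}, whose argument is not configuration-independent: it uses the ruling $\overline{W}_{\mathbb{C}}\to\mathbb{F}_1\to\mathbb{P}^1$ and the fact that $D$ is contained in the union of the sections and finitely many fibers to kill $H^0(\Omega^1_{\overline{W}_{\mathbb{C}}}(\log D)\otimes\omega_{\overline{W}_{\mathbb{C}}})$ by restricting to fibers. To repair your proof you would need to import that fibration argument (or an equivalent surjectivity statement for $H^1(\mathcal{T}_{\overline{W}_{\mathbb{C}}})\to\bigoplus_i H^1(\mathcal{O}_{D_i}(D_i))$), not just the rationality of $\overline{W}_{\mathbb{C}}$.
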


\begin{proof}
The proofs in PPSU~\cite[Lemma~3.3, Proposition~3.4]{PPSU-2018} work verbatim.
\end{proof}

\begin{proposition}
Any smoothing $\mathcal{X} \to \mathbb D$ of $X$ can be extended to a smoothing $\overline{\mathcal{X}} \to \mathbb D$ of $\overline{X}$ which preserves the curves $A_1, \dotsc, A_e$ and $\overline{S}_{\infty}$ so that $X_t = \overline{X}_t \setminus (\cup_{i=1}^{e} A_i \cup \overline{S}_{\infty})$ for $0 \neq t \in \mathbb D$.
\end{proposition}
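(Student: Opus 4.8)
The plan is to extend the smoothing by first passing to the partially compactified surface $\widehat{X}$, extending the deformation there while keeping the cyclic quotient singularity $Q$ locally trivial, and then undoing the contraction of the chain $A=\cup_{j=1}^{e}A_{j}$ by a simultaneous minimal resolution of a trivial family. Recall the relevant geometry: $\widehat{X}$ has exactly two singular points, the Wahl singularity $x$ (obtained by contracting $E$) and the cyclic quotient singularity $Q$ (obtained by contracting $A$); the curve $\overline{S}_{\infty}$ is a smooth rational curve on $\widehat{X}$, disjoint from $x$ and passing through $Q$; and $X=\widehat{X}\setminus(\{Q\}\cup\overline{S}_{\infty})$ is an open subset of $\widehat{X}$ whose only singularity is $x$. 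Since $\mathcal{X}\to\mathbb{D}$ is induced by the $\mathbb{Q}$-Gorenstein smoothing of $x$, it restricts near $x$ to that smoothing and is a trivial deformation on $X\setminus\{x\}$.

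The core step is to produce a deformation $\widehat{\mathcal{X}}\to\mathbb{D}$ of $\widehat{X}$ which restricts to $\mathcal{X}$ on $X$ and to the trivial deformation in a neighborhood of $Q$. I would build this by gluing: cover $\widehat{X}$ by $X$ and a small neighborhood $U$ of $\overline{S}_{\infty}\cup\{Q\}$ chosen to be disjoint from $x$, put $\mathcal{X}$ on $X$ and the trivial deformation $U\times\mathbb{D}$ on $U$, and note that on the overlap $X\cap U$, which by the definition of $X$ avoids $x$, $Q$, and $\overline{S}_{\infty}$, both families are trivial, so the two can be patched. The obstruction to performing this patching globally, together with the higher-order compatibilities of the gluing isomorphisms, lies in $H^{2}(\widehat{X},\sheaf{T}_{\widehat{X}})$, which vanishes by Lemma~\ref{lemma:local-to-global-vanishes}; the argument is the one of PPSU~\cite[\S3]{PPSU-2018} applied verbatim. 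This yields a $\mathbb{Q}$-Gorenstein smoothing $\widehat{\mathcal{X}}\to\mathbb{D}$ of $\widehat{X}$ that is a trivial family in a neighborhood of $Q$.

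Next I would un-contract $Q$. Near $Q$ the family $\widehat{\mathcal{X}}\to\mathbb{D}$ is, by construction, the trivial family $(Q\in\widehat{X})\times\mathbb{D}$ of a cyclic quotient singularity, whose simultaneous minimal resolution is simply the product of the minimal resolution of $(Q\in\widehat{X})$ with $\mathbb{D}$. Performing this resolution inside $\widehat{\mathcal{X}}$ recovers the chain $A_{1},\dotsc,A_{e}$ as the constant family $(\cup_{j}A_{j})\times\mathbb{D}$ and produces a deformation $\overline{\mathcal{X}}\to\mathbb{D}$ of $\overline{X}$. The curve $\overline{S}_{\infty}$ lies entirely in the locus where the family is trivial --- it is disjoint from $x$, the only point being smoothed, and near $Q$ we only resolved a trivial family --- so it extends to the constant divisor $\overline{S}_{\infty}\times\mathbb{D}\subset\overline{\mathcal{X}}$. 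Hence $\overline{\mathcal{X}}\to\mathbb{D}$ restricts to $\mathcal{X}$ on $X$, preserves $A_{1},\dotsc,A_{e}$ and $\overline{S}_{\infty}$, and fiberwise satisfies $X_{t}=\overline{X}_{t}\setminus(\cup_{i=1}^{e}A_{i}\cup\overline{S}_{\infty})$ because precisely these constant families were removed; its general fiber is smooth since $x$ is smoothed and the $A_{i}$ and $\overline{S}_{\infty}$ are smooth, so $\overline{\mathcal{X}}\to\mathbb{D}$ is the desired smoothing of $\overline{X}$.

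The main obstacle is the core step: showing that the local deformation data --- the $\mathbb{Q}$-Gorenstein smoothing at $x$ and the trivial deformation at $Q$ --- integrate to a single global deformation of $\widehat{X}$ restricting correctly to $\mathcal{X}$, i.e.\ that the genuinely two-dimensional gluing can be carried out consistently. This is exactly where the vanishing $H^{2}(\widehat{X},\sheaf{T}_{\widehat{X}})=0$ enters, and it is the input imported from PPSU~\cite{PPSU-2018}.
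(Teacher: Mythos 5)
Your proposal follows essentially the same route as the paper: both pass to the partial compactification $\widehat{X}$, invoke the vanishing $H^{2}(\widehat{X},\sheaf{T}_{\widehat{X}})=0$ of Lemma~\ref{lemma:local-to-global-vanishes} to produce a global deformation of $\widehat{X}$ extending the given smoothing while keeping $Q$ locally trivial, and then perform a simultaneous resolution of $Q$ on the fibers to recover the chain $A_{1},\dotsc,A_{e}$ and $\overline{S}_{\infty}$. Your added details on the open-cover gluing and on the simultaneous resolution being a product near $Q$ are correct elaborations of the same argument rather than a different approach.
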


\begin{proof}
Since there is no local-to-global obstruction to deform $\widehat{X}$ by the above Lemma~\ref{lemma:local-to-global-vanishes}, one may take a deformation $\widehat{\mathcal{X}} \to \mathbb D$ of $\widehat{X}$ that is an extension of a smoothing of $Y$ such that it preserves the singular point $Q$. By taking a simultaneous resolution of the singularity $Q$ on each fibers of $\widehat{\mathcal{X}} \to \mathbb D$, we obtain the desired smoothing $\overline{\mathcal{X}} \to \mathbb D$ of $\overline{X}$.
\end{proof}

\begin{theorem}\label{theorem:main-derived-from-MMP}
Given a plainly simple embedding of a $B_{p,q}$ in $V$ as above, there is a sequence of flips $\mathcal{X}_1 \dashrightarrow \mathcal{X}_2 \dashrightarrow \dotsb \dashrightarrow \mathcal{X}_n$ with $\mathcal{X}=\mathcal{X}_1$ such that the central fiber $X_{n,0}$ of $\mathcal{X}_n$ is smooth and equal to $V_{\mathbb{C}}$. Hence we have that $(B_{p,q} \subset V)$ and $(B_{p,q} \subset X_{n,0})$ are diffeomorphic.
\end{theorem}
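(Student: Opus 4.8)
The plan is to run the relative semi-stable MMP on the compactified family $\overline{f}\colon\overline{\mathcal{X}}\to\mathbb{D}$ constructed above and to show that it consists only of flips and terminates in the trivial family over $\overline{V}_{\mathbb{C}}$. The first point I would isolate is the consequence of the hypothesis ``(plainly) simple'': since $W=V\sharp k\,\overline{\mathbb{CP}^2}$, the complex model $\overline{\pi}_{\mathbb{C}}\colon\overline{W}_{\mathbb{C}}\to\overline{V}_{\mathbb{C}}$ is a sequence of ordinary blow-ups, so the minimal resolution $\overline{W}_{\mathbb{C}}$ of the special fibre $\overline{X}$ is birational to the general fibre $\overline{X}_t\cong\overline{V}_{\mathbb{C}}$ ($t\neq 0$). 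In other words, $\overline{\mathcal{X}}\to\mathbb{D}$ is a one-parameter $\mathbb{Q}$-Gorenstein smoothing of a (projective) surface with a single Wahl singularity in which the general fibre is birational to the minimal resolution of the special fibre. This is exactly the ``birational $\mathbb{Q}$-Gorenstein smoothing'' situation, and the plainly simple hypothesis is what puts us in it.

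Next I would invoke the explicit semi-stable MMP of HTU~\cite{HTU-2017} (see Urz\'ua~\cite[Section~2]{Urzua-2016}), run relatively over $\mathbb{D}$ with respect to $K_{\overline{\mathcal{X}}}$: each step is a flip or a divisorial contraction of type mk1A or mk2A. Here I would use that the single Wahl singularity of $\overline{X}$ has a $\mathbb{Q}$-Gorenstein smoothing with Milnor fibre $B_{p,q}$, hence with vanishing Milnor number, together with the birationality just recorded, to conclude — exactly as in Urz\'ua~\cite[Section~3]{Urzua-2016a} and PPSU~\cite[\S3]{PPSU-2018}, whose arguments apply verbatim in the present compactified setting (cf. Lemma~\ref{lemma:local-to-global-vanishes}) — that no divisorial contraction occurs and that the MMP terminates after finitely many flips $\overline{\mathcal{X}}=\overline{\mathcal{X}}_1\dashrightarrow\cdots\dashrightarrow\overline{\mathcal{X}}_n$ in a family $\overline{\mathcal{X}}_n\to\mathbb{D}$ whose central fibre $\overline{X}_{n,0}$ is smooth. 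I expect this to be the main obstacle: one must rule out both that the MMP gets stuck at a still-singular central fibre and that a divisorial contraction carries the general fibre to a different birational model; the mechanism is that, the Milnor number being $0$, the order $m$ of the Wahl singularity (equivalently, the length of its associated continued fraction) drops strictly at each flip — as in the model flip of Proposition~\ref{proposition:usual-flip} — until the singularity disappears and no $K$-negative extremal ray remains over $\mathbb{D}$.

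Finally I would identify $\overline{X}_{n,0}$ with $\overline{V}_{\mathbb{C}}$ and descend to the affine picture. Since each flip is an isomorphism over $\mathbb{D}\setminus\{0\}$ and is supported near the Wahl singularity, which by the construction preceding the theorem lies away from the curves $A_1,\dots,A_e,\overline{S}_{\infty}$ and from the chain $B_1,\dots,B_r$, all of these curves persist through the MMP; thus $\overline{X}_{n,0}$ is a smooth rational surface carrying the configuration of Equation~\eqref{equation:blowing-up-F_1} and having the same second Betti number as $\overline{V}_{\mathbb{C}}$, which forces $\overline{X}_{n,0}=\overline{V}_{\mathbb{C}}$ and $\overline{\mathcal{X}}_n$ to be the trivial family over $\overline{V}_{\mathbb{C}}$. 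Restricting the flips to the complements of $\cup_{i=1}^{e}A_i\cup\overline{S}_{\infty}$ gives the sequence $\mathcal{X}=\mathcal{X}_1\dashrightarrow\cdots\dashrightarrow\mathcal{X}_n$ with smooth central fibre $X_{n,0}=V_{\mathbb{C}}$, and reversing it exhibits $\mathcal{X}$ as a finite sequence of antiflips applied to the trivial family $\mathcal{X}_n$, proving the Main Theorem. For the last assertion, the general fibres satisfy $X_{n,t}\cong X_t$ because flips are isomorphisms there, so by \eqref{equation:diffeomorphism-of-pairs} and the smoothness of $\overline{\mathcal{X}}_n$ near $0$ we obtain $(B_{p,q}\subset X_{n,0})\sim(B_{p,q}\subset X_t)\sim(B_{p,q}\subset V)$.
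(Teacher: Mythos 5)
Your overall strategy --- run the relative MMP on the compactified family, show that only flips occur, and terminate in a smooth central fibre --- is the same as the paper's, but the two steps you yourself flag as ``the main obstacle'' are not actually carried out, and the mechanism you propose for them does not work. First, the Milnor number you invoke is the wrong one: the Milnor number of the $\mathbb{Q}$-Gorenstein smoothing of a Wahl singularity is $0$ for \emph{every} Wahl singularity, so it cannot detect anything about this particular situation. Second, and more seriously, ``the index of the Wahl singularity drops at each flip'' only shows that the sequence of flips is finite; it does not rule out the MMP stopping at a central fibre $X^+$ which still has Wahl singularities but has $K_{X^+}$ nef relative to the contraction $v'\in V'$ of the chain $B$ --- a singular M-resolution is a perfectly legitimate endpoint of the relative MMP. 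The paper closes exactly this gap with an Artin-component argument: when $K_{X^+}$ is nef, $X^+\to V'$ is an M-resolution, and the blown-down smoothing of the cyclic quotient singularity $v'\in V'$ has Milnor number equal to $r$, the number of $2$-spheres in $B$ (because the general fibre is diffeomorphic to $V$); since $r$ is the Milnor number of the Artin component and this is the maximal one among all components by Wahl~\cite[(4.7.3)]{Wahl-1981}, the smoothing lies in the Artin component and $X^+$ must be the minimal resolution of $v'$, i.e.\ smooth and equal to $V_{\mathbb{C}}$. This is the heart of the proof and is absent from your proposal. (The correct reason that no divisorial contraction occurs is also different from yours: by Proposition~\ref{proposition:divisorial-contraction-for-mk12A} a divisorial mk1A or mk2A blows down a $(-1)$-curve in the general fibre over $Y$, but that exceptional locus is the chain $B$ with all self-intersections $\le -2$.)

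Your final identification step also rests on a false claim: the flips are \emph{not} supported away from the chain $B_1,\dots,B_r$. The Wahl singularity is the contraction of $C_{p,q}$, which by the definition of a plainly simple embedding contains proper transforms of some of the $B_i$, and the first flipping curve is a $(-1)$-curve meeting $C_{p,q}$; so the whole MMP takes place in the region of the $B_i$'s, and the configuration of Equation~\eqref{equation:blowing-up-F_1} does not simply ``persist''. Only the curves $A_1,\dots,A_e,\overline{S}_{\infty}$ are untouched. The identification $X_{n,0}=V_{\mathbb{C}}$ again comes out of the Artin-component step (or, as in the paper's last paragraph, from the fact that the smooth surface $X_{n,0}$ dominates $V'$ with no $(-1)$-curves over $v'$, hence is the minimal resolution $V_{\mathbb{C}}$). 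Citing Urz\'ua~\cite[Section~3]{Urzua-2016a} for the ``birational $\mathbb{Q}$-Gorenstein smoothing'' framework is reasonable motivation, but without the quantitative Milnor-number input above the argument is not a proof.
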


\begin{proof}
To simplify notation, let us write $\pi \colon W \to V$ instead of $\pi_{\mathbb{C}} \colon W_{\mathbb{C}} \to V_{\mathbb{C}}$. Since $b_2(V) < b_2(W)$, there are $(-1)$-curves on $W$. Moreover since the rational blow-down of $W$ along $C_{p,q}$ is diffeomorphic to $V$ (which is minimal), we must have $(-1)$-curves in $W$ intersecting $C_{p,q}$ at one point. Let $C$ be one of them. Since $C+C_{p,q}$ is negative definite (the whole exceptional divisor of $\pi$ plus the proper transform of the linear chain $B$ contracts to a cyclic quotient singularity), we have that $C+C_{p,q}$ contracts to a cyclic quotient singularity. As above, let $W \to X$ be the contraction of $C_{p,q}$, and let $(X \subset \mathcal{X}) \to (0 \in \mathbb{D})$ be a deformation of $X$ that is induced by the $\mathbb{Q}$-Gorenstein smoothing of the Wahl singularity $x \in X$. We have that  $K_{\mathcal{X}} \cdot C <0$. Then the blowing-down deformation of $C \subset X$ into a cyclic quotient singularity $y \in Y$ gives a flipping extremal neighborhood $\mathcal{X} \to \mathcal{Y}$ over $\mathbb{D}$. Note that a general fiber for both $\mathcal{X}$ and $\mathcal{Y}$ is diffeomorphic to $V$. Let $(C \subset \mathcal{X}) \dashrightarrow (C^+ \subset \mathcal{X}^+ =:\mathcal{X}_2)$ be the flip of $C$ over $\mathcal{Y} \to \mathbb{D}$.

We want to continue running the relative MMP. (We could think this is happening in families of projective surfaces over $\mathbb D$ by compactifying the surface $W$ as it was done in PPSU~\cite{PPSU-2018} and explained above. There are no local-to-global obstructions, and we run MMP relative to a blow-up of $B \subset V$.) For that, let us first assume that
$K_{\mathcal{X}^+}$ is nef. This is the same as assuming $K_{X^+}$ is nef, where $X^+$ is the central fiber (see Urz{\'u}a~\cite[Section 2]{Urzua-2016a}). Let $V \to v' \in V'$ be the contraction of $L$ to a cyclic quotient singularity $v' \in V'$. Then we have a contraction map $X^+ \to V'$, and it has $K_{X^+}$ nef, so it an M-resolution of $v' \in V'$ (i.e., only Wahl singularities and $K_{X^+}$ nef), this is a crepant partial resolution of a unique P-resolution of $V'$. In particular, the $\mathbb Q$-Gorenstein smoothing $X^+ \subset \mathcal{X}^+ \to \mathbb D$ blows-down to a deformation $V' \subset \mathcal{V}' \to \mathbb D$ so that the general fibers have the same Milnor numbers KSB~\cite{Kollar-Shepherd-Barron-1988}. Now, the general fiber of $X^+ \subset \mathcal{X}^+ \to \mathbb D$ is diffeomorphic to $V$, and so its Milnor number is the number of 2-spheres in the linear chain $B$. But then the deformation $V' \subset \mathcal{V}' \to \mathbb D$ happens in the Artin component of the versal deformation space of $v' \in V'$. This is because the Milnor number of a smoothing in the Artin component is the largest one from all components, since the difference of the dimension of any two components is equal to twice the difference between the corresponding Milnor numbers, and the largest dimension corresponds to the one of the Artin component Wahl~\cite[(4.7.3)]{Wahl-1981}. We note that the Milnor number for the Artin component is the number of 2-spheres in $B$. Therefore, the surface $X^+$ must be smooth, and so $X^+=V$, and this is what we wanted to conclude.

Otherwise, suppose that $K_{X^+}$ is not nef. Then as it was done e.g. in Urz{\'u}a~\cite[Section 2]{Urzua-2016a}, there must be a curve $C \subset X^+$ with $C \cdot K_{X^+}<0$ and $C^2 <0$, and so we are in the situation of an extremal neighborhood of type k1A or k2A as in HTU~\cite[Section 5]{HTU-2017}, i.e., induced from a $(-1)$-curve in the minimal resolution of $X^+$. It must be of flipping type since the general fiber of $X^+ \subset \mathcal{X}^+ \to \mathbb D$ is diffeomorphic to $V$. Let $(C \subset X^+ \subset \mathcal{X^+}) \dashrightarrow (C^+ \subset X_3 \subset \mathcal{X}_3)$ be the flip over $\mathbb D$. Note that again, by the same reasons as above, we have the contraction $X_3 \to V'$. And so, from this point we can apply induction on the number of flips.

We note that the process must end in a smooth deformation $\mathcal{X}_n \to \mathbb D$ for some $n$ since at each flip the index of the Wahl singularities involved strictly decrease (see e.g. HTU~\cite{HTU-2017}).

Since we used only flips, the general fiber $X_{n,t}$ ($t \neq 0$) is isomorphic to $V_{\mathbb C}$. Hence $X_{n,0}$ has no $(-1)$-curves, and since it is some blow-up of $V_{\mathbb{C}}$, we must have $X_{n,0}=V_{\mathbb{C}}$. Hence, referring to Equation~\eqref{equation:diffeomorphism-of-pairs}, MMP gives an explanation of how $B_{p,q}$ is simply embedded in $V$ via a trivial family plus antiMMP on its complex model $V_{\mathbb{C}}$.
\end{proof}

\begin{corollary}\label{corollary:No-in-An}
There is no plainly simple embedded rational homology ball $B_{p,q}$ in a neighborhood of the linear chain of $2$-spheres whose self-intersection numbers are all $(-2)$.
\end{corollary}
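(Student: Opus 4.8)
The plan is to derive Corollary~\ref{corollary:No-in-An} from Theorem~\ref{theorem:main-derived-from-MMP} by a Milnor number (equivalently, second Betti number) obstruction. Suppose, for contradiction, that there is a plainly simple embedding of some $B_{p,q}$ in $V$, where $V$ is a regular neighborhood of a linear chain $B = B_1 \cup \dotsb \cup B_r$ with all $B_i^2 = -2$; so $V_{\mathbb{C}}$ is the minimal resolution of a cyclic quotient singularity of type $\frac{1}{r+1}(1,r)$, whose continued fraction is $[\underbrace{2,\dotsc,2}_{r}]$. By Theorem~\ref{theorem:main-derived-from-MMP}, there is a sequence of flips $\mathcal{X} = \mathcal{X}_1 \dashrightarrow \dotsb \dashrightarrow \mathcal{X}_n$ over $\mathbb{D}$ ending in a trivial family with central fiber $X_{n,0} = V_{\mathbb{C}}$, and the first step of the MMP run in the proof is a \emph{flipping} extremal neighborhood obtained by contracting $C_{p,q}$ in $W$ and then blowing down a $(-1)$-curve $C$ meeting $C_{p,q}$ once, into a cyclic quotient singularity $(y \in Y)$.

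The key point is to look at the anti-MMP direction: starting from the trivial family over $V_{\mathbb{C}}$, each antiflip is an mk1A or mk2A antiflip, and in particular in the final antiflip step (reading the sequence backwards) we produce, from $V_{\mathbb{C}}$, a family whose central fiber acquires a Wahl singularity — equivalently, running the MMP forwards, the last flip has $V_{\mathbb{C}}$ as the central fiber of its flip. But an extremal $P$-resolution, or indeed any flipping extremal neighborhood whose flip has smooth central fiber, requires that $V_{\mathbb{C}}$ (or the relevant contraction $V' $ of a $(-1)$-curve $L$ in it) admit a nontrivial $P$-resolution with a Wahl singularity; concretely, the first flip's source $\mathcal{X}_1 = \mathcal{X}$ has central fiber $X$ with a Wahl singularity obtained by contracting $E$ (the curves of $C_{p,q}$) inside a blow-up of $V_{\mathbb{C}}$, and that singularity must be contractible to a cyclic quotient singularity together with an adjacent $(-1)$-curve. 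The plan is therefore to argue that no blow-up of the all-$(-2)$ chain $[\underbrace{2,\dotsc,2}_{r}]$ contains a Wahl configuration $E$ that is ``plainly simple,'' i.e. a negative-definite linear chain $E$ whose complementary curves are exactly the proper transforms of the $B_i$'s together with the exceptional $(-1)$-curves of the blow-up. I expect the cleanest way to see this is via the Milnor number / $b_2$ bookkeeping already used in the proof of Theorem~\ref{theorem:main-derived-from-MMP}: the general fiber of $\mathcal{X}$ is diffeomorphic to $V$, so its Milnor number equals $r$, the number of $2$-spheres in $B$; since the smoothing lies in the Artin component of $v' \in V'$ by Wahl~\cite[(4.7.3)]{Wahl-1981}, this forces the P-resolution side to be trivial, i.e. there is simply no flipping extremal neighborhood at all whose flip has central fiber $V_{\mathbb{C}}$ when $V_{\mathbb{C}}$ is the all-$(-2)$ chain — hence no first flip, contradicting the existence of the $(-1)$-curve $C$ meeting $C_{p,q}$.

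Concretely, here is the step order I would follow. \textbf{Step 1:} Record that $V_{\mathbb{C}}$ is the minimal resolution of $\frac{1}{r+1}(1,r)$, whose Milnor number (for the Artin/smoothing component) is $r = b_2(V)$, and that by Equation~\eqref{equation:diffeomorphism-of-pairs} the general fiber $X_t$ of any family built from $V_{\mathbb{C}}$ by antiflips is diffeomorphic to $V$. \textbf{Step 2:} Apply Theorem~\ref{theorem:main-derived-from-MMP} to get the flip sequence and, in particular, a first flipping extremal neighborhood $\mathcal{X} \to \mathcal{Y}$ over $\mathbb{D}$ with a $(-1)$-curve $C \subset X$ meeting $C_{p,q}$ once and $K_{\mathcal{X}} \cdot C < 0$; contracting $C \cup C_{p,q}$ gives the cyclic quotient singularity $(y \in Y)$. \textbf{Step 3:} Observe that $\mathcal{Y} \to \mathbb{D}$ is a smoothing of $(y \in Y)$ whose general fiber is diffeomorphic to $V$, hence has Milnor number $r$; but the $P$-resolution $X^+ \to Y$ provided by the flip has a Wahl singularity on its exceptional curve $C^+$, so by KSB~\cite{Kollar-Shepherd-Barron-1988} the Milnor number of the corresponding smoothing is strictly larger than $r$ unless that Wahl singularity is smooth — which contradicts $C$ being a nontrivial contraction of a $(-1)$-curve inside the blow-up of the all-$(-2)$ chain. \textbf{Step 4:} Conclude that no such $C$, hence no such plainly simple embedding, can exist. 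The main obstacle I anticipate is making precise why the all-$(-2)$ hypothesis (rather than merely $V$ minimal) is what forces the contradiction — equivalently, why a blow-up of $[\underbrace{2,\dotsc,2}_{r}]$ admits no ``plainly simple'' Wahl configuration; I expect this to reduce to the observation that contracting the $(-1)$-curves from any such blow-up of an all-$(-2)$ chain returns the same chain with Milnor number $r$, so there is no room for an M-resolution with a genuine Wahl singularity, i.e. $v' \in V'$ has only the trivial (Artin) $P$-resolution.
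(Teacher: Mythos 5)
Your overall strategy (invoke Theorem~\ref{theorem:main-derived-from-MMP} and then rule out the resulting flip sequence) is the right one, but the mechanism you propose for the contradiction is not the one that works, and it is not the paper's. The paper looks at the \emph{last} flip: by Theorem~\ref{theorem:main-derived-from-MMP} its central fiber $X_n^+$ equals $V_{\mathbb{C}}$, and by the definition of a flip it is an extremal $P$-resolution of a cyclic quotient singularity, hence must contain a compact irreducible curve $C^+$ on which $K$ is \emph{positive} (relative ampleness in Definition~\ref{definition:P-resolution}). In a sufficiently small neighborhood of an all-$(-2)$ chain the only compact irreducible curves are the $(-2)$-spheres themselves, and $K\cdot B_i=0$ for each of them. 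That canonical-class obstruction is the entire proof, and it is precisely where the all-$(-2)$ hypothesis enters; your proposal never isolates it.

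The Milnor-number/Artin-component bookkeeping you substitute for it has two problems. First, the inequality in your Step 3 is backwards: by Wahl~\cite[(4.7.3)]{Wahl-1981}, as quoted in the proof of Theorem~\ref{theorem:main-derived-from-MMP}, the Artin component has the \emph{largest} Milnor number, so a smoothing through a $P$-resolution with genuine Wahl singularities has Milnor number strictly \emph{smaller} than the Artin one, not larger; moreover the extremal $P$-resolution produced by a flip need not carry a genuine Wahl singularity at all (the pair $(1,1)$ is allowed), so even with the sign corrected the argument only yields that the relevant central fibers are smooth --- which is the \emph{conclusion} of Theorem~\ref{theorem:main-derived-from-MMP}, not a contradiction. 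Second, and more fundamentally, that Milnor-number argument is applied in the proof of Theorem~\ref{theorem:main-derived-from-MMP} for an \emph{arbitrary} minimal chain $V$, where the MMP run terminates consistently at $V_{\mathbb{C}}$ (and where, by Section~\ref{section:Rational-homology-balls}, simple embeddings do exist); so it cannot by itself show that ``there is no flipping extremal neighborhood whose flip has central fiber $V_{\mathbb{C}}$'' in the all-$(-2)$ case. Your closing observation that the $A_r$-singularity admits only the Artin $P$-resolution is true and is morally the same fact as the canonical-class obstruction, but as stated it concerns the contraction of the whole chain, whereas the singularities $(Q\in Y)$ over which the individual flips occur are contractions of sub-configurations such as $C\cup C_{p,q}$; to make it bite you would still need to reduce to the condition $K\cdot C^+>0$ on the last extremal $P$-resolution, i.e.\ to the paper's one-line argument.
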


\begin{proof}
Suppose that there is a sequence of flips that induces a simple embedding of a rational homology ball. Think on the last flipped surface $X_n^+$. Then it comes from a $P$-resolution which consists of only one $(-2)$-curve. But a $(-2)$-curve is not $P$-resolution since canonical class is trivial on it.
\end{proof}

\begin{remark}
Khodorovskiy~\cite[Prop.5.1]{Khodorovskiy-2012} proves that there are no symplectically embedded $B_{n,1}$ in $E(2)$ for any $n \ge 2$, which implies that there is no symplectic embedding $B_{n,1} \hookrightarrow V_{-2}$ for any $n \ge 2$. The above corollary would be a generalization of this phenomenon.
\end{remark}

%------------------------------------------------------------------------------
\section{Smoothly embedded rational homology balls via MMP}
\label{section:Rational-homology-balls}

We prove the existence of smoothly embedded rational homology balls using Mori sequences of antiflips. We first construct a special flipping mk1A associated to a given linear chain. Let $V$ be a germ of complex surface containing a linear chain
\begin{equation*}
\Gamma =
\begin{tikzpicture}[scale=0.75]
\node[bullet] (20) at (2,0) [label=above:{$-e_1$}] {};

\node[empty] (250) at (2.5,0) [] {};
\node[empty] (30) at (3,0) [] {};

\node[bullet] (350) at (3.5,0) [label=above:{$-e_t$}] {};
%\node[bullet] (450) at (4.5,0) [label=above:{$-b$},label=below:{$D$}] {};

\draw [-] (20)--(250);
\draw [dotted] (20)--(350);
\draw [-] (30)--(350);
%\draw [-] (350)--(450);

\end{tikzpicture}
\end{equation*}
of complex rational curves with $e_i \ge 2$ for all $i$. Assume that $e_t \ge 3$. If $n/a=[e_1,\dotsc,e_t-1]$, then, blowing up appropriately $V$ several times (cf. Corollary~\ref{corollray:[a]->[T]-[a]}), we have a regular neighborhood $U$ of the linear chain
\begin{equation*}
\begin{tikzpicture}
%\node[bullet] (00) at (0,0) [label=above:{$-b_1$}] {};
%\node[bullet] (10) at (1,0) [label=above:{$-b_1$},label=below:{$B_1$}] {};
\node[bullet] (20) at (2,0) [label=above:{$-b_1$},,label=below:{$B_1$}] {};

\node[empty] (250) at (2.5,0) [] {};
\node[empty] (30) at (3,0) [] {};

\node[bullet] (350) at (3.5,0) [label=above:{$-b_s$},label=below:{$B_s$}] {};
\node[bullet] (450) at (4.5,0) [label=above:{$-1$},label=below:{$C$}] {};

\node[bullet] (550) at (5.5,0) [label=above:{$-e_1$}] {};

\node[empty] (60) at (6,0) [] {};
\node[empty] (650) at (6.5,0) [] {};
\node[bullet] (70) at (7,0) [label=above:{$-e_{t-1}$}] {};
%\node[bullet] (750) at (7.5,0) [label=above:{$-e_{t-1}$},label=below:{$D_{t-1}$}] {};
%\node[bullet] (90) at (9,0) [label=above:{$-a_{j(\delta)+2}$},label=below:{$A_{j(\delta)+2}$}] {};
%\node[bullet] (100) at (10,0) [label=above:{$1$}] {};

%\draw [-] (00)--(10);
%\draw [-] (10)--(20);
\draw [-] (20)--(250);
\draw [dotted] (20)--(350);
\draw [-] (30)--(350);
\draw [-] (350)--(450);
\draw [-] (450)--(550);

\draw [-] (550)--(60);
\draw [dotted] (60)--(70);
\draw [-] (650)--(70);
%\draw [-] (80)--(90);
%\draw [-] (90)--(100);
\end{tikzpicture}
\end{equation*}
where $n^2/(na-1)=[b_1, \dotsc, b_s]$. Let $U \to X$ be the contraction of the linear chain $B_1 \cup \dotsb \cup B_s$ in $U$ to a Wahl singularity $P \in X$ defined by the pair $(n,a)$. Denote again by $C \subset X$ the image of $C \subset U$. Let $U \to Z$ be the contraction of $C \subset X$ to a quotient singularity $Q \in Y$. Let $\mathcal{X} \to \mathbb{D}$ be a deformation of $X$ induced by the $\mathbb{Q}$-Gorenstein smoothing of the singularity $P \in X$ and let $\mathcal{Y} \to \mathbb{D}$ be the blown-down deformation of $\mathcal{X} \to \mathbb{D}$. According to Proposition~\ref{proposition:usual-flip}, $(C \subset \mathcal{X}) \to (Q \in \mathcal{Y})$ is an initial flipping mk1A.

\begin{definition}
The mk1A constructed above is called the \emph{usual initial flipping mk1A associated to the linear chain $\Gamma$}. We denote it
\begin{equation*}
\begin{tikzpicture}
%\node[bullet] (00) at (0,0) [label=above:{$-b_1$}] {};
%\node[bullet] (10) at (1,0) [label=above:{$-b_1$},label=below:{$B_1$}] {};
\node[rectangle] (20) at (2,0) [label=above:{$-b_1$},,label=below:{$B_1$}] {};

\node[empty] (250) at (2.5,0) [] {};
\node[empty] (30) at (3,0) [] {};

\node[rectangle] (350) at (3.5,0) [label=above:{$-b_s$},label=below:{$B_s$}] {};
\node[bullet] (450) at (4.5,0) [label=above:{$-1$},label=below:{$C$}] {};

\node[bullet] (550) at (5.5,0) [label=above:{$-e_1$}] {};

\node[empty] (60) at (6,0) [] {};
\node[empty] (650) at (6.5,0) [] {};
\node[bullet] (70) at (7,0) [label=above:{$-e_{t-1}$}] {};
%\node[bullet] (750) at (7.5,0) [label=above:{$-e_{t-1}$},label=below:{$D_{t-1}$}] {};
%\node[bullet] (90) at (9,0) [label=above:{$-a_{j(\delta)+2}$},label=below:{$A_{j(\delta)+2}$}] {};
%\node[bullet] (100) at (10,0) [label=above:{$1$}] {};

%\draw [-] (00)--(10);
%\draw [-] (10)--(20);
\draw [-] (20)--(250);
\draw [dotted] (20)--(350);
\draw [-] (30)--(350);
\draw [-] (350)--(450);
\draw [-] (450)--(550);

\draw [-] (550)--(60);
\draw [dotted] (60)--(70);
\draw [-] (650)--(70);
%\draw [-] (80)--(90);
%\draw [-] (90)--(100);
\end{tikzpicture}
\end{equation*}
where the vertex $\square$ implies that the corresponding rational curve is contracted to a Wahl singularity. For simplicity, we may denote the above mk2A by $[b_1, \dotsc, b_s]-1-e_1-\dotsb-e_{t-1}$, where the bracket $[\ ]$ implies that we contract them.
\end{definition}

\begin{remark}
The linear chain
\begin{tikzpicture}[scale=0.75]
\node[bullet] (20) at (2,0) [label=above:{$-e_1$}] {};

\node[empty] (250) at (2.5,0) [] {};
\node[empty] (30) at (3,0) [] {};

\node[bullet] (350) at (3.5,0) [label=above:{$-e_t$}] {};
%\node[bullet] (450) at (4.5,0) [label=above:{$-b$},label=below:{$D$}] {};

\draw [-] (20)--(250);
\draw [dotted] (20)--(350);
\draw [-] (30)--(350);
%\draw [-] (350)--(450);

\end{tikzpicture}
is the \emph{$\delta$-half linear chain} of the Wahl singularity $\frac{1}{n^2}(1,na-1)$ defined in PPS~\cite[Definition~2.6]{PPS-2016}.
\end{remark}

Almost all initial flipping mk1A associated to a linear chain have $\delta > 1$:

\begin{lemma}\label{lemma:almost-all-delta>1}
Let $\mathbb{E}_1$ be the usual initial flipping mk1A associated to $\Gamma$. Then $\delta=1$ if and only if $\Gamma$ is of the form
\begin{equation*}
\begin{tikzpicture}[scale=0.75]
\node[bullet] (20) at (2,0) [label=above:{$-2$}] {};

\node[empty] (250) at (2.5,0) [] {};
\node[empty] (30) at (3,0) [] {};

\node[bullet] (350) at (3.5,0) [label=above:{$-2$}] {};
\node[bullet] (450) at (4.5,0) [label=above:{$-3$}] {};

\draw [-] (20)--(250);
\draw [dotted] (20)--(350);
\draw [-] (30)--(350);
\draw [-] (350)--(450);
\end{tikzpicture}
\end{equation*}
\end{lemma}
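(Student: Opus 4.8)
The plan is to compute $\delta$ explicitly in terms of the data of $\Gamma$ and see exactly when it equals $1$. Recall from Proposition~\ref{proposition:usual-flip} that the usual initial flipping mk1A associated to $\Gamma$ has $\delta = n - a$, where $n/a = [e_1, \dotsc, e_{t-1}, e_t - 1]$ is the Hirzebruch--Jung continued fraction of the $\delta$-half linear chain (which exists precisely because $e_t \ge 3$, so that $e_t - 1 \ge 2$ and the truncated chain is still a valid negative-definite linear chain). So the problem reduces to the purely arithmetic question: for which linear chains $[e_1, \dotsc, e_{t-1}, e_t-1]$ with all entries $\ge 2$ does the associated fraction $n/a$ satisfy $n - a = 1$, i.e. $a = n-1$?

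First I would recall the standard fact that $n/(n-1) = [\underbrace{2, 2, \dotsc, 2}_{n-1}]$, the chain of $n-1$ entries all equal to $-2$. (One checks this by an immediate induction on $n$, or by noting that the dual of $[2,\dotsc,2]$ with $n-1$ entries is $[n]$, and $n/1$ together with $n/(n-1)$ are dual fractions.) Therefore $\delta = 1$, i.e. $a = n-1$, holds if and only if $[e_1, \dotsc, e_{t-1}, e_t - 1]$ is the chain $[2, 2, \dotsc, 2]$. Since continued fraction expansions with all entries $\ge 2$ are unique, this forces $e_1 = \dotsb = e_{t-1} = 2$ and $e_t - 1 = 2$, that is $e_t = 3$; and conversely if $\Gamma$ has this shape then $[2,\dotsc,2,2] = n/(n-1)$ and $\delta = 1$. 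This gives exactly the claimed form of $\Gamma$.

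The main obstacle — really the only subtlety — is making sure the truncated chain $[e_1,\dotsc,e_{t-1},e_t-1]$ is legitimately of the form to which Proposition~\ref{proposition:usual-flip} applies, i.e. that all its entries are at least $2$: this is where the hypothesis $e_t \ge 3$ is used, and it is also implicitly why the statement singles out $e_t = 3$ (rather than allowing $e_t = 2$) at the bottom. One should also note the degenerate short cases: if $t = 1$ then $\Gamma = [e_1]$ with $e_1 \ge 3$, the truncated chain is the single entry $[e_1 - 1]$, giving $n/a = (e_1-1)/1$, so $\delta = e_1 - 2$, which is $1$ exactly when $e_1 = 3$ — consistent with the displayed graph read with $t=1$. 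I would state the computation $\delta = n-a$ via Proposition~\ref{proposition:usual-flip}, invoke uniqueness of HJ continued fractions together with $n/(n-1) = [2,\dotsc,2]$, and conclude; the whole argument is a few lines once these standard facts are in hand.
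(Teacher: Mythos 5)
Your proposal is correct and follows essentially the same route as the paper: both compute $\delta = n-a$ via Proposition~\ref{proposition:usual-flip} and then observe that $a = n-1$ forces $[e_1,\dotsc,e_{t-1},e_t-1] = n/(n-1) = [2,\dotsc,2]$, hence $e_1=\dotsb=e_{t-1}=2$ and $e_t=3$. The extra remarks you add (uniqueness of Hirzebruch--Jung expansions, the role of $e_t \ge 3$, and the $t=1$ case) are correct and only make explicit what the paper leaves implicit.
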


\begin{proof}
By Proposition~\ref{proposition:usual-flip}, $\delta=n-a$. Therefore $n-a=1$ iff $a=n-1$ iff $[a_1,\dotsc,a_t-1]=n/(n-1)=[2,\dotsc,2]$.
\end{proof}

\begin{lemma}[{PPS~\cite[Corollary~3.12]{PPS-2016}}]
\label{lemma:flip->delta-half}
Applying flips described in Proposition~\ref{proposition:usual-flip} repeatedly to the usual initial flipping mk1A $(C \subset \mathcal{X}) \to (Q \in \mathcal{Y})$ associated to $\Gamma$, we have a deformation $\mathcal{Z} \to \mathbb{D}$ such that all of the fibers are smooth. In particular, the central fiber $Z_0$ of $\mathcal{Z} \to \mathbb{D}$ is the regular neighborhood $V$.
\end{lemma}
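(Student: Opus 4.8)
The plan is to iterate the flip from Proposition~\ref{proposition:usual-flip} starting from the usual initial flipping mk1A associated to $\Gamma$ and track the central fibers until the Wahl singularity disappears. Recall from Lemma~\ref{lemma:almost-all-delta>1} that (apart from the single exceptional chain $[2,\dots,2,3]$, where $\delta=1$) we have $\delta=n-a>1$, so by Proposition~\ref{proposition:flip-for-Mori-sequence} the Mori sequence is infinite but that is irrelevant here: we are not running the Mori sequence, we are flipping a \emph{fixed} initial mk1A and iterating. The key structural point is that after contracting $B_1\cup\dots\cup B_s$ to the Wahl point $P\in X$ and contracting $C\subset X$ further to $Q\in Y$, we have a flipping extremal neighborhood $(C\subset\mathcal X)\to(Q\in\mathcal Y)$ over $\mathbb D$ whose general fiber is diffeomorphic to the regular neighborhood $V$ of $\Gamma$, since blowing up $V$ (via Corollary~\ref{corollray:[a]->[T]-[a]}) and then rationally blowing down the chain $[b_1,\dots,b_s]$ returns $V$ smoothly.

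First I would describe the flip explicitly. By Proposition~\ref{proposition:usual-flip}, with $n^2/(na-1)=[b_1,\dots,b_s]$ and the mk1A being $[b_1,\dots,b_{s-1},\overline{b_s}]$ compactified by the tail $-1-e_1-\dots-e_{t-1}$, the flipped central fiber $X^+$ has an extremal $P$-resolution whose Wahl chain is strictly shorter — more precisely, its index strictly decreases (the relevant continued fraction passes from $[b_1,\dots,b_s]$ to something of the form $e_1-[e_2,\dots,e_{i_0}-1]$ plugged back into the configuration). Because the general fiber of $X^+\subset\mathcal X^+\to\mathbb D$ is still diffeomorphic to $V$, any $(-1)$-curve in the minimal resolution of $X^+$ gives again a flipping (not divisorial) extremal neighborhood — exactly as in the proof of Theorem~\ref{theorem:main-derived-from-MMP}, a divisorial contraction would force a drop in the Milnor number and hence in $b_2$ of the general fiber, contradiction. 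So each step is a flip and we may iterate.

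Next, the termination argument: at each flip the index of the (unique) Wahl singularity on the central fiber strictly decreases — this is the standard fact about mk1A/mk2A flips recorded in HTU~\cite{HTU-2017} and used already in Theorem~\ref{theorem:main-derived-from-MMP}. Hence after finitely many flips the central fiber has no Wahl point, i.e. the deformation $\mathcal Z\to\mathbb D$ is a smooth deformation. Since we used only flips, the general fiber is unchanged and is diffeomorphic (indeed, in the complex model, isomorphic) to $V$; and a smooth central fiber of a smooth deformation with general fiber $V_{\mathbb C}$ must be $V_{\mathbb C}$ itself, because it is a blow-down-free surface birational to $V_{\mathbb C}$ — any $(-1)$-curve would again produce a flipping neighborhood, and no Wahl point remains to flip. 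Therefore $Z_0=V$ as claimed. The case $\Gamma=[2,\dots,2,3]$ with $\delta=1$ is handled the same way, now using Proposition~\ref{proposition:delta=1}: the Mori sequence (hence the flip chain) terminates after one step, landing on a smooth central fiber equal to $V$.

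The main obstacle I expect is bookkeeping the explicit continued-fraction identities at each flip to confirm that the flipped central fiber is again of the shape $[b_1',\dots,b_{s'}']-1-(\text{tail})$ with the tail still built from the $e_i$'s (so that Proposition~\ref{proposition:usual-flip} applies verbatim at the next step and the induction is genuinely self-contained). This is purely combinatorial — it follows from the duality $[e_1,\dots,e_s,1,b_e,\dots,b_1]=0$ and Corollaries \ref{corollray:[a]->[T]-[a]} and the preceding one — but it is the place where one must be careful that ``appropriately'' in the blow-ups is consistent from one stage to the next. Everything else (the non-divisoriality of each step, the index drop, the final identification $Z_0=V$) is a direct transcription of the arguments already given for Theorem~\ref{theorem:main-derived-from-MMP}. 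Alternatively, one may simply cite PPS~\cite[Corollary~3.12]{PPS-2016} for the identity and note that our compactification and MMP setup reduces the statement to theirs.
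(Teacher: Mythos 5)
The paper does not actually prove this lemma: it is imported wholesale from PPS~\cite[Corollary~3.12]{PPS-2016}, so your closing sentence (``one may simply cite PPS \dots'') is in fact exactly what the paper does. Your independent sketch is structurally consistent with how that cited result is proved — iterate the flip of Proposition~\ref{proposition:usual-flip}, observe that the general fiber is frozen and diffeomorphic to $V$, use the strict drop of the index of the Wahl singularity to terminate, and identify the smooth central fiber with $V_{\mathbb C}$ because only flips were used.

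Two caveats. First, the step you flag as ``the main obstacle'' — verifying that after each application of Proposition~\ref{proposition:usual-flip} the new central fiber is again a configuration of the form $[b_1',\dotsc,b_{s'}']-1-(\text{tail in the }e_i\text{'s})$, and that the process lands exactly on $\Gamma$ rather than on some other smooth blow-down-free model — is not a side issue; it is the entire content of PPS Corollary~3.12. Your fallback arguments (index drop for termination, ``no $(-1)$-curves so $X_{n,0}=V_{\mathbb C}$'' for the endpoint) do close the gap in principle, since they mirror the proof of Theorem~\ref{theorem:main-derived-from-MMP}, but as written the sketch either defers the crux to bookkeeping you have not done or reduces to the citation. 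Second, your treatment of the case $\Gamma=[2,\dotsc,2,3]$ conflates two different things: the Mori sequence of Proposition~\ref{proposition:delta=1} parametrizes the \emph{family of antiflips over a fixed flip target}, whereas this lemma concerns the \emph{chain of successive flips} applied to one fixed initial mk1A. The finiteness of the Mori sequence when $\delta=1$ is irrelevant here (as you yourself note earlier in the proposal), and the flip chain in that case does not terminate ``after one step'' — e.g.\ for $[2,\dotsc,2,3]$ the first flip still leaves a Wahl singularity of smaller index. No case split on $\delta$ is needed for this lemma; the uniform index-drop argument covers everything.
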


\begin{theorem}\label{theorem:main-linear}
Let $V$ be a regular neighborhood of a linear chain $\Gamma=E_1 \cup \dotsb \cup E_t$ ($t \ge 1$) of $2$-spheres whose dual graph is given by
\begin{equation*}
\begin{tikzpicture}[scale=0.75]
\node[bullet] (20) at (2,0) [label=above:{$-e_1$},label=below:{$E_1$}] {};

\node[empty] (250) at (2.5,0) [] {};
\node[empty] (30) at (3,0) [] {};

\node[bullet] (350) at (3.5,0) [label=above:{$-e_t$},label=below:{$E_t$}] {};
%\node[bullet] (450) at (4.5,0) [label=above:{$-b$},label=below:{$D$}] {};

\draw [-] (20)--(250);
\draw [dotted] (20)--(350);
\draw [-] (30)--(350);
%\draw [-] (350)--(450);

\end{tikzpicture}
\end{equation*}
where $E_i \cdot E_i = -e_i \le -2$ for all $i$. Let $n/a=[e_1,\dotsc,e_t-1]$. If $e_t \ge 3$, then there are pairs $(B_{m_i, a_i}, B_{m_{i+1}, a_{i+1}})$ of disjoint rational homology balls smoothly embedded in $V$, where $(m_i, a_i)$ and $(m_{i+1}, a_{i+1})$ are consecutive pairs of Mori sequences associated to $n^2/(na-1)$. If, furthermore, $\Gamma$ is not of the form
\begin{tikzpicture}[scale=0.5]
\node[bullet] (20) at (2,0) [label=above:{$-2$}] {};

\node[empty] (250) at (2.5,0) [] {};
\node[empty] (30) at (3,0) [] {};

\node[bullet] (350) at (3.5,0) [label=above:{$-2$}] {};
\node[bullet] (450) at (4.5,0) [label=above:{$-3$}] {};

\draw [-] (20)--(250);
\draw [dotted] (20)--(350);
\draw [-] (30)--(350);
\draw [-] (350)--(450);
\end{tikzpicture}
then there are infinitely many such pairs of disjoint rational homology balls.
\end{theorem}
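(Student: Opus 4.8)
The plan is to realise each consecutive pair of the Mori sequence associated to $n^2/(na-1)$ as the two Wahl singularities on the central fibre of a one-parameter $\mathbb Q$-Gorenstein smoothing whose general fibre, after running the relative MMP, is $V$. First I would set up the usual initial flipping mk1A associated to $\Gamma$. Since $e_t\ge 3$, the Hirzebruch-Jung continued fraction $n/a=[e_1,\dots,e_t-1]$ is well defined with all entries at least $2$, so Corollary~\ref{corollray:[a]->[T]-[a]} produces a blow-up $U$ of $V$ carrying the linear chain $[b_1,\dots,b_s]-1-e_1-\dots-e_{t-1}$ with $n^2/(na-1)=[b_1,\dots,b_s]$; contracting $B_1\cup\dots\cup B_s$ to a Wahl singularity of type $(n,a)$ and then contracting $C$ yields, by Proposition~\ref{proposition:usual-flip}, the usual initial flipping mk1A $\mathbb E_1\colon(C\subset\mathcal X)\to(Q\in\mathcal Y)$ with $\delta=n-a$. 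Viewed as the special case $m_1=a_1=1$ of an mk2A, with its other Wahl pair taken to be $(n,n-a)$ (which defines the same singularity $\frac{1}{n^2}(1,na-1)$), one has $\delta m_1-m_2=(n-a)-n=-a<0$, so $\mathbb E_1$ is an initial mk2A to which Proposition~\ref{proposition:flip-for-Mori-sequence} applies.

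Next I would pass to the Mori sequence $\{\mathbb E_i\}_{i\ge 1}$ of $\mathbb E_1$. By Proposition~\ref{proposition:flip-for-Mori-sequence} each $\mathbb E_i$ is a flipping extremal neighbourhood whose two Wahl singularities are defined exactly by the consecutive pairs $(m_i,a_i)$ and $(m_{i+1},a_{i+1})$ of the Mori sequence, and the flips of all the $\mathbb E_i$ have one and the same extremal $P$-resolution $X^+$ as central fibre. Fix $i$, let $\mathcal X_i\to\mathbb D$ be the $\mathbb Q$-Gorenstein smoothing underlying $\mathbb E_i$, and let $X_i$ be its central fibre, which carries the two Wahl singularities at two distinct points of the flipping curve. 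Near each of those points $\mathcal X_i\to\mathbb D$ restricts to a $\mathbb Q$-Gorenstein smoothing of that germ, whose Milnor fibre is the corresponding rational homology ball; since the two points are distinct, the general fibre $X_{i,t}$ ($t\ne 0$) contains a copy of $B_{m_i,a_i}$ and a disjoint copy of $B_{m_{i+1},a_{i+1}}$.

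It then remains to identify $X_{i,t}$ with $V$. The flip of $\mathbb E_i$ replaces $X_i$ by $X^+$; continuing the relative MMP with the flips of Proposition~\ref{proposition:usual-flip}, which depend only on the central fibre, one reaches by Lemma~\ref{lemma:flip->delta-half} a smooth deformation $\mathcal Z\to\mathbb D$ whose central fibre is the given regular neighbourhood $V$. (As in Theorem~\ref{theorem:main-derived-from-MMP}, this MMP is run inside a relatively projective compactified family, where there is no local-to-global obstruction.) Every flip is an isomorphism in codimension one over $\mathbb D$, hence restricts to an isomorphism between general fibres, so $X_{i,t}\cong Z_t$; and since $\mathcal Z\to\mathbb D$ is a smooth family, $Z_t$ is diffeomorphic to $Z_0=V$. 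Transporting the two disjoint rational homology balls of $X_{i,t}$ through this diffeomorphism produces disjoint $B_{m_i,a_i}$ and $B_{m_{i+1},a_{i+1}}$ smoothly embedded in $V$, which is the first assertion.

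Finally, for the infinitude I would invoke Lemma~\ref{lemma:almost-all-delta>1}: $\delta=1$ precisely when $\Gamma$ has the shape $[2,\dots,2,3]$, so if $\Gamma$ is not of this form then $\delta>1$ and the Mori sequence $\{\mathbb E_i\}_{i\ge 1}$ is infinite. Since $m_{i+1}>m_i$ all along a Mori sequence, the pairs $(m_i,m_{i+1})$ are pairwise distinct, and hence so are the pairs $(B_{m_i,a_i},B_{m_{i+1},a_{i+1}})$, which yields infinitely many distinct pairs of disjoint rational homology balls in $V$. The step I expect to be the main obstacle is the globalisation in the third paragraph: one must make sure that the flip of every $\mathbb E_i$ yields literally the same $X^+$ as that of the usual initial mk1A $\mathbb E_1$ (the content of Proposition~\ref{proposition:flip-for-Mori-sequence}), so that the flip sequence of Lemma~\ref{lemma:flip->delta-half}, which depends only on that central fibre, can be continued from $X^+$ down to the smooth family with central fibre $V$; the remaining points, namely that the two Wahl singularities of $X_i$ sit at distinct points and that Ehresmann's theorem applies to the non-compact fibres after compactification, are routine and already appear in the cited references.
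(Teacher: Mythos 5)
Your proposal is correct and follows essentially the same route as the paper's proof: construct the usual initial flipping mk1A associated to $\Gamma$, view it as an mk2A with the extra pair $(1,1)$, use Proposition~\ref{proposition:flip-for-Mori-sequence} so that all members of the Mori sequence share the same flip $X^+$, identify the general fibre with $V$ via Lemma~\ref{lemma:flip->delta-half}, and get infinitude from Lemma~\ref{lemma:almost-all-delta>1}. The only difference is presentational — you route the diffeomorphism $X_{i,t}\cong V$ through the common $X^+$ and spell out the compactification, where the paper first identifies $X_{1,t}\cong V$ and then notes all general fibres agree — which is the same argument.
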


\begin{proof}
We may assume that $V$ itself is a complex surface containing the linear chain consisting of complex rational curves.

Let $\mathbb{E}_1: (C_1 \subset \mathcal{X}_1) \to (Q \in \mathcal{Y})$ be the usual initial flipping mk1A associated to $\Gamma$. According to Lemma~\ref{lemma:flip->delta-half}, after a sequence of finite flips applied to $(C_1 \subset \mathcal{X}_1) \to (Q \in \mathcal{Y})$, we have a deformation $\mathcal{Z} \to \mathbb{D}$ such that all of the fibers are smooth. Hence the central fiber $Z_0$ is diffeomorphic to a general fiber $Z_t$ ($t \neq 0$). Furthermore the central fiber $Z_0$ is equal to the regular neighborhood $V$. Therefore, since a flip changes only the central fiber, $X_{1,t}$ is diffeomorphic to $Z_t$, hence, to $V = Z_0$.

We may regard the mk1A $(C_1 \subset \mathcal{X}_1) \to (Q \in \mathcal{Y})$ as an initial flipping mk2A with one more Wahl singularity defined by the pair $(1,1)$. Let $(C_i \subset \mathcal{X}_i) \to (Q \in \mathcal{Y})$ ($i \ge 1$) be the mk2A with Wahl singularities defined by $(m_i, a_i)$ and $(m_{i+1}, a_{i+1})$ in the Mori sequence of mk2A's starting from $(C_1 \subset \mathcal{X}_1) \to (Q \in \mathcal{Y})$.

Notice that a general fiber $X_{i,t}$'s $(t \neq 0)$ of $\mathcal{X}_i \to \mathbb{D}$ contains disjoint rational homology balls $B_{m_i, a_i}$ and $B_{m_{i+1}, a_{i+1}}$ and two general fibers $X_{i,t}$ and $X_{j,t}$ for $t \neq 0$ are  diffeomorphic because this is a flip
over $\mathbb{D}$. Therefore every $X_{i,t}$ is diffeomorphic to $V$. Hence there are two disjoint pairs of rational homology balls $(B_{m_i, a_i}, B_{m_{i+1}, a_{i+1}})$ smoothly embedded in $V$, as asserted.

Furthermore, if $\Gamma$ is not of the form
\begin{tikzpicture}[scale=0.5]
\node[bullet] (20) at (2,0) [label=above:{$-2$}] {};

\node[empty] (250) at (2.5,0) [] {};
\node[empty] (30) at (3,0) [] {};

\node[bullet] (350) at (3.5,0) [label=above:{$-2$}] {};
\node[bullet] (450) at (4.5,0) [label=above:{$-3$}] {};

\draw [-] (20)--(250);
\draw [dotted] (20)--(350);
\draw [-] (30)--(350);
\draw [-] (350)--(450);
\end{tikzpicture}
then $\delta > 1$ by Lemma~\ref{lemma:almost-all-delta>1}. Hence the Mori sequence for $\mathbb{E}_1$ has infinitely many mk2A's, which implies that there are infinitely many pairs of disjoint rational homology balls embedded in $V$.
\end{proof}

\begin{proposition}\label{proposition:simple-embedding}
The embeddings in the above Theorem~\ref{theorem:main-linear} are simple.
\end{proposition}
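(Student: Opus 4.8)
The goal is to show that the embeddings of $B_{m_i,a_i}$ and $B_{m_{i+1},a_{i+1}}$ into $V$ produced in Theorem~\ref{theorem:main-linear} are simple in the sense of Khodorovskiy, i.e.\ that the corresponding rational blow-up of $V$ is obtained from $V$ by a sequence of ordinary blow-ups. The key observation is that a flip over $\mathbb{D}$ of an mk1A or mk2A only modifies the central fiber; in fact, running the flip \emph{backwards} (an antiflip) replaces the central fiber of a $\mathbb{Q}$-Gorenstein smoothing of an extremal $P$-resolution by the central fiber of the $\mathbb{Q}$-Gorenstein smoothing of the initial flipping extremal neighborhood, and on the level of minimal resolutions this replacement is a composition of ordinary blow-ups. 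So the strategy is: track how the minimal resolution of the central fiber changes along the Mori sequence, and read off the rational blow-up picture from the resolved total space.

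\textbf{Key steps.} First I would invoke Lemma~\ref{lemma:flip->delta-half}: starting from the usual initial flipping mk1A $\mathbb{E}_1$ associated to $\Gamma$ and applying the flips of Proposition~\ref{proposition:usual-flip} repeatedly, one arrives at a deformation $\mathcal{Z} \to \mathbb{D}$ all of whose fibers are smooth, with central fiber $Z_0 = V$ (the complex model). Because each such step is a flip over $\mathbb{D}$, the resolution $\widetilde{X}_{1,0}$ of the central fiber $X_{1,0}$ of $\mathcal{X}_1$ is obtained from $V$ by ordinary blow-ups; equivalently, $X_{1,0}$ is the surface $U \to X$ of the construction preceding the Definition of the usual initial flipping mk1A, namely $V$ blown up so as to contain the linear chain $[b_1,\dots,b_s]-1-e_1-\dots-e_{t-1}$ with the first $s$ curves contracted to the Wahl singularity defined by $(n,a)$. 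Second, I would pass from the mk1A $\mathbb{E}_1$ (regarded as an mk2A with trivial second Wahl pair $(1,1)$) to the general mk2A $\mathbb{E}_i$ of the Mori sequence, with Wahl singularities $(m_i,a_i)$ and $(m_{i+1},a_{i+1})$. Here I would use the fact from HTU~\cite{HTU-2017} that all $\mathbb{E}_i$ in a Mori sequence have the same flip $X^+$; hence the central fiber $X_{i,0}$ of $\mathcal{X}_i$ has a minimal resolution obtained from $X^+$'s minimal resolution by blow-ups, and $X^+$'s resolution in turn is obtained from $Z_0 = V$ by blow-ups (this is exactly the content of Lemma~\ref{lemma:flip->delta-half} applied after flipping). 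Composing, the minimal resolution $\widetilde{X}_{i,0}$ is a blow-up of $V$.

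\textbf{Conclusion of the argument.} Third, I would identify the rational blow-up. The general fiber $X_{i,t}$ ($t\neq 0$) is diffeomorphic to $V$ and contains the disjoint $B_{m_i,a_i}, B_{m_{i+1},a_{i+1}}$, which are the Milnor fibers of the $\mathbb{Q}$-Gorenstein smoothings of the two Wahl singularities of $X_{i,0}$. The corresponding rational blow-up of $X_{i,t}$ replaces each $B_{m_j,a_j}$ by the plumbing $C_{m_j,a_j}$, and the resulting $4$-manifold is diffeomorphic to the minimal resolution $\widetilde{X}_{i,0}$ of the central fiber, since rationally blowing down a $C_{m_j,a_j}$ is the surgery inverse to the $\mathbb{Q}$-Gorenstein smoothing. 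Thus the rational blow-up of $(B_{m_i,a_i} \sqcup B_{m_{i+1},a_{i+1}} \subset V)$ is diffeomorphic to $\widetilde{X}_{i,0}$, which by the previous paragraph is $V$ with a sequence of ordinary blow-ups; therefore the embedding is simple. The main obstacle I anticipate is the bookkeeping in the second step: verifying carefully that passing along the Mori sequence from $\mathbb{E}_i$ to $\mathbb{E}_{i+1}$ (or from $\mathbb{E}_1$ to $\mathbb{E}_i$) corresponds on the level of minimal resolutions of central fibers to \emph{only} blow-ups and no blow-downs — i.e.\ that the process of going from $X^+$ back to the $X_{i,0}$'s never contracts a curve that is not exceptional over $V$ — which ultimately rests on the antiflip being realized by blow-ups as in the proof of Theorem~\ref{theorem:main-derived-from-MMP} and on the explicit numerics of Proposition~\ref{proposition:flip-for-Mori-sequence}.
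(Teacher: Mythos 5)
Your proposal follows essentially the same route as the paper's (very terse) proof: identify the rational blow-up of $(B_{m_i,a_i}\sqcup B_{m_{i+1},a_{i+1}}\subset V)$ with the minimal resolution $\widetilde{X}_{i,0}$ of the central fiber of $\mathbb{E}_i$, recall from Theorem~\ref{theorem:main-linear} that the general fiber $X_{i,t}$ is diffeomorphic to $V$, and show that $\widetilde{X}_{i,0}$ is an iterated ordinary blow-up of $V$. The paper simply asserts this last point; you try to prove it, and the obstacle you flag at the end is real: knowing that all the $\mathbb{E}_i$ share the same flip $X^+$ only tells you that $\widetilde{X}_{i,0}$ and the minimal resolution of $X^+$ both dominate the minimal resolution of $(Q\in Y)$ (they are two different iterated blow-ups of it), not that one dominates the other, so the chain of dominations $\widetilde{X}_{i,0}\to\widetilde{X^+}\to\dotsb\to V$ does not come for free. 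There is a more direct way to close this that avoids tracking the flip sequence altogether: the exceptional configuration of $\widetilde{X}_{i,0}$ over $Q\in Y$ (the two Wahl chains joined by the $(-1)$-curve $C_i$), together with the fixed residual chain of curves through $Q$ inherited from the construction of $U$, is a negative-definite linear configuration contracting to the \emph{same} cyclic quotient singularity germ that $\Gamma\subset V$ contracts to; this is checked once for $i=1$, where $\widetilde{X}_{1,0}=U$ is by construction a blow-up of $V$, and it persists for all $i$ because $(Q\in Y)$ and the residual curves are fixed along the Mori sequence. Since $V$ is the \emph{minimal} resolution of that germ, every $\widetilde{X}_{i,0}$ factors through $V$ by the universal property of the minimal resolution, and a proper birational morphism of smooth surfaces is a composition of ordinary blow-ups. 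With that substitution your argument is complete and agrees with the paper's.
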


\begin{proof}
Let $\mathbb{E}_i: (C_i \subset \mathcal{X}_i) \to (Q \in \mathcal{Y})$ be the $i$-th flipping mk2A constructed in the proof of Theorem~\ref{theorem:main-linear}. We showed in the proof that a general fiber $X_{i,t}$ is diffeomorphic to the regular neighborhood $V$. Note that the rationally blown up $Z$ can be obtained from $V$ by a sequence of ordinary blowing ups. Hence the assertion follows.
\end{proof}

\begin{example}
The usual initial flipping mk2A $\mathbb{E}_1$ associated to
\begin{tikzpicture}[scale=0.5]
\node[bullet] (350) at (3.5,0) [label=above:{$-3$}] {};
\node[bullet] (450) at (4.5,0) [label=above:{$-3$}] {};
\draw [-] (350)--(450);
\end{tikzpicture}
is
\begin{tikzpicture}[scale=0.5]
\node[rectangle] (10) at (1,0) [label=above:{$-3$}] {};
\node[rectangle] (20) at (2,0) [label=above:{$-5$}] {};
\node[rectangle] (30) at (3,0) [label=above:{$-2$}] {};
\node[bullet] (40) at (4,0) [label=above:{$-1$}] {};
\node[bullet] (50) at (5,0) [label=above:{$-3$}] {};
\draw [-] (10)--(20);
\draw [-] (20)--(30);
\draw [-] (30)--(40);
\draw [-] (40)--(50);
\end{tikzpicture}
For example, the second mk2A $\mathbb{E}_2$ is
\begin{tikzpicture}[scale=0.5]
\node[rectangle] (10) at (1,0) [label=above:{$-3$}] {};
\node[rectangle] (20) at (2,0) [label=above:{$-7$}] {};
\node[rectangle] (30) at (3,0) [label=above:{$-2$}] {};
\node[rectangle] (40) at (4,0) [label=above:{$-2$}] {};
\node[rectangle] (50) at (5,0) [label=above:{$-3$}] {};
\node[rectangle] (60) at (6,0) [label=above:{$-2$}] {};
\node[bullet] (70) at (7,0) [label=above:{$-1$}] {};
\node[rectangle] (80) at (8,0) [label=above:{$-3$}] {};
\node[rectangle] (90) at (9,0) [label=above:{$-5$}] {};
\node[rectangle] (100) at (10,0) [label=above:{$-2$}] {};
\node[bullet] (110) at (11,0) [label=above:{$-3$}] {};
\draw [-] (10)--(20);
\draw [-] (20)--(30);
\draw [-] (30)--(40);
\draw [-] (40)--(50);
\draw [-] (50)--(60);
\draw [-] (60)--(70);
\draw [-] (70)--(80);
\draw [-] (80)--(90);
\draw [-] (90)--(100);
\draw [-] (100)--(110);
\end{tikzpicture}.
The first flip on $\mathbb{E}_1$ is given by
\begin{tikzpicture}[scale=0.5]
\node[bullet] (10) at (1,0) [label=above:{$-3$}] {};
\node[rectangle] (20) at (2,0) [label=above:{$-4$}] {};
\node[bullet] (30) at (3,0) [label=above:{$-1$}] {};
\draw [-] (10)--(20);
\draw [-] (20)--(30);
\end{tikzpicture}.
Then, the second flip induces the original linear chain
\begin{tikzpicture}[scale=0.5]
\node[bullet] (10) at (1,0) [label=above:{$-3$}] {};
\node[bullet] (20) at (2,0) [label=above:{$-3$}] {};
\draw [-] (10)--(20);
\end{tikzpicture}.
\end{example}

\begin{remark}
One may construct other initial flipping mk2A's $\mathbb{E}_1$ $(C \subset \mathcal{X}_1)$ associated to the given linear chain $\Gamma$ such that there is a sequence of flips $\mathcal{X}_1 \overset{flip}{\dashrightarrow} \mathcal{X}_2 \overset{flip}{\dashrightarrow} \dotsb \overset{flip}{\dashrightarrow} \mathcal{X}_n$ where the central fiber $X_{n,0}$ of $\mathcal{X}_n$ is the regular neighborhood of the given linear chain $\Gamma$. For example, the linear chain
\begin{tikzpicture}[scale=0.5]
\node[bullet] (10) at (1,0) [label=above:{$-3$}] {};
\node[bullet] (20) at (2,0) [label=above:{$-2$}] {};
\node[bullet] (30) at (3,0) [label=above:{$-4$}] {};
\draw [-] (10)--(20);
\draw [-] (20)--(30);
\end{tikzpicture}
has the usual one $[3,2,5,4,2]-1-3-2$ but also the new one $1-[4,2,5,4,2,2]-1-[4]-2$. So there are another Mori sequences associated to the given linear chain.
\end{remark}

\begin{remark}
If \(\Gamma =
\begin{tikzpicture}[scale=0.5]
\node[bullet] (20) at (2,0) [label=above:{$-2$}] {};

\node[empty] (250) at (2.5,0) [] {};
\node[empty] (30) at (3,0) [] {};

\node[bullet] (350) at (3.5,0) [label=above:{$-2$}] {};
\node[bullet] (450) at (4.5,0) [label=above:{$-3$}] {};

\draw [-] (20)--(250);
\draw [dotted] (20)--(350);
\draw [-] (30)--(350);
\draw [-] (350)--(450);
\end{tikzpicture}\),
then $B_{n,1}$ and $B_{n+1,1}$ are smoothly embedded in $V$ by Proposition~\ref{proposition:delta=1}. We do not know whether there are more rational homology balls in $V$.
\end{remark}

The following Corollary~\ref{corollary:V_-n} and Theorem~\ref{theorem:main-sharp} generalize Khodorovskiy~\cite[Theorems~1.2,~1.3]{Khodorovskiy-2014}.

\begin{corollary}\label{corollary:V_-n}
Let $V_{-n}$ be a neighborhood of a $2$-sphere with self-intersection number $(-n)$ for $n \ge 4$. Then there exist infinitely many pairs of disjoint rational homology balls smoothly embedded in $V_{-n}$. In particular, we have $B_{n-1,1} \hookrightarrow V_{-n}$. Furthermore, if $n=4$, then there is an embedding $B_{2m+1,1} \hookrightarrow V_{-4}$ for any $m \ge 1$.
\end{corollary}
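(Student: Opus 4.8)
The plan is to deduce everything directly from Theorem~\ref{theorem:main-linear} and Proposition~\ref{proposition:simple-embedding} by taking $\Gamma$ to be the single $(-n)$-curve, i.e. $t=1$ and $e_1=n$; the only work is unwinding the numerical recipe of that theorem and, for the last clause, running the associated Mori recursions when $n=4$.

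First I would unwind the construction preceding Theorem~\ref{theorem:main-linear}. With $t=1$ and $e_1=n$ the relevant Hirzebruch--Jung data is $n/a=[e_1-1]=[n-1]=(n-1)/1$, so the governing Wahl pair is $(n-1,1)$, the attached Wahl singularity is $\frac{1}{(n-1)^2}(1,n-2)$ — whose $\mathbb{Q}$-Gorenstein Milnor fiber is $B_{n-1,1}$ — and $n^2/(na-1)=(n-1)^2/(n-2)$ (for $n=4$ this is $9/2=[5,2]$). By Proposition~\ref{proposition:usual-flip} the usual initial flipping mk1A associated to $\Gamma$ has $\delta=(n-1)-1=n-2$, which is $\ge 2$ because $n\ge 4$ (equivalently, $\Gamma$ is not the excluded chain of Lemma~\ref{lemma:almost-all-delta>1}). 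Hence Theorem~\ref{theorem:main-linear} yields infinitely many pairwise distinct pairs $(B_{m_i,a_i},B_{m_{i+1},a_{i+1}})$ of disjoint rational homology balls smoothly (indeed simply, by Proposition~\ref{proposition:simple-embedding}) embedded in $V_{-n}$, indexed by consecutive terms of a Mori sequence associated to $(n-1)^2/(n-2)$. The initial term $\mathbb{E}_1$ of that Mori sequence is the mk1A itself — Wahl pairs $(1,1)$, i.e. a smooth point with Milnor fiber $B^4$, and $(n-1,n-2)$ — and since $(n-1,n-2)$ and $(n-1,1)$ define the same Wahl singularity, the general fiber of $\mathcal{X}_1$, which is diffeomorphic to $V_{-n}$, carries an embedded $B_{n-1,1}$. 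This proves $B_{n-1,1}\hookrightarrow V_{-n}$.

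For the final clause I would run the Mori recursions of Subsection~\ref{seciton:Mori-sequences} in the case $n=4$, where $\delta=2$, $m_1=a_1=1$, and $m_2=3$, $a_2=2$. A one-line induction gives the closed forms $d(i)=2i-1$ and $c(i)=1$ for all $i\ge 1$, so the $i$-th mk2A $\mathbb{E}_i$ carries the Wahl pairs $(d(i),c(i))=(2i-1,1)$ and $(d(i+1),d(i+1)-c(i+1))=(2i+1,2i)$. As $(2i+1,2i)$ and $(2i+1,1)$ again define the same Wahl singularity, the general fiber of $\mathcal{X}_i$ — diffeomorphic to $V_{-4}$ by Theorem~\ref{theorem:main-linear} — contains the disjoint pair $(B_{2i-1,1},B_{2i+1,1})$. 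Taking $i=m$ gives $B_{2m+1,1}\hookrightarrow V_{-4}$ for every $m\ge 1$.

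I do not expect a genuine obstacle here: the entire argument is bookkeeping layered on Theorem~\ref{theorem:main-linear}. The only points that require a moment's care are the continued-fraction identity $[n-1]=(n-1)/1$ together with the Wahl data it produces (in particular the value $\delta=n-2$), and the closed forms $d(i)=2i-1$, $c(i)=1$ of the Mori recursions when $n=4$; both are elementary. The passage $B_{p,p-q}\cong B_{p,q}$ used twice above is precisely the $(n,a)\leftrightarrow(n,n-a)$ symmetry of Wahl singularities recalled in Section~\ref{section:preliminaries}.
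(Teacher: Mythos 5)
Your proposal is correct and follows essentially the same route as the paper: both proofs take $\Gamma$ to be the single $(-n)$-curve, identify the usual initial flipping mk1A with Wahl pair $(n-1,1)$ (so $\delta=n-2>1$), invoke Theorem~\ref{theorem:main-linear} for the infinitude and for $B_{n-1,1}\hookrightarrow V_{-n}$, and for $n=4$ read off the Mori sequence to get $B_{2m+1,1}\hookrightarrow V_{-4}$. The only difference is cosmetic: you compute the recursions $d(i)=2i-1$, $c(i)=1$ explicitly, while the paper records the same data in the bracket notation $\dotsb-[2m+3,\overline{2},2,\dotsc,2]-\dotsb-[5,\overline{2}]-\varnothing$.
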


\begin{proof}
The numerical data for the initial mk1A associated to
\begin{tikzpicture}[scale=0.75]
\node[bullet] (10) at (1,0) [label=above:{$-n$}] {};
\end{tikzpicture}
is $[(n-1, 1)]-1-[(1,1)]$, that is, $[n+1,2,\dotsc,\overline{2}]$. So there is an embedding $B_{n-1,1} \hookrightarrow V_{-n}$. If $n=4$, then the numerical data for the initial mk1A is $[5,\overline{2}]$. So the Mori sequence can be described simply by
\begin{equation*}
\dotsb - [2m+3,\overline{2},2,\dotsc,2] - \dotsb - [5,\overline{2}] - \varnothing
\end{equation*}
where $m \ge 1$. Therefore $B_{2m+1,1}$ $(m \ge 1)$ is smoothly embedded in $V_{-4}$.
\end{proof}

\begin{theorem}\label{theorem:main-sharp}
For any integers $n, a$ with $n > a \ge 1$ and $(n,a)=1$, there are infinitely many pairs of disjoint rational homology balls $(B_{m_i, a_i}, B_{m_{i+1}, a_{i+1}})$ smoothly embedded in $B_{n,a} \sharp \overline{\mathbb{CP}^2}$, where $(m_i, a_i)$ and $(m_{i+1}, a_{i+1})$ are consecutive pairs of Mori sequences associated to the initial divisorial mk2A with two Wahl singularities defined by the pairs $(n,a)$ and $(n^2, na-1)$. In particular, $(m_1, a_1)=(n, a)$ and $(m_2, a_2)=(n^2, n^2-(na-1))$, that is, $B_{n,a} \sqcup B_{n^2,na-1} \hookrightarrow B_{n,a} \sharp \overline{\mathbb{CP}^2}$. In particular, there is an embedding $B_{2m,1} \hookrightarrow B_{2,1} \sharp \overline{\mathbb{CP}}^2$ for any $m \ge 1$.
\end{theorem}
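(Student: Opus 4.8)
The plan is to run the argument of Theorem~\ref{theorem:main-linear} with the usual initial \emph{flipping} mk1A replaced by a suitable initial \emph{divisorial} mk2A. I would start from the Wahl singularity $Q\in Y$ of type $\frac{1}{n^2}(1,na-1)$ together with its $\mathbb{Q}$-Gorenstein smoothing $\mathcal{Y}\to\mathbb{D}$, so that the general fiber $Y_t$ ($t\ne 0$) is diffeomorphic to $B_{n,a}$. By HTU~\cite{HTU-2017} the singularity $Q\in Y$ admits a Mori sequence of divisorial extremal neighborhoods $\mathbb{E}_i\colon (C_i\subset X_i\subset\mathcal{X}_i)\to(Q\in Y\subset\mathcal{Y})$, $i\ge 1$, in which $X_i$ carries exactly two Wahl singularities, defined by the consecutive Mori pairs $(m_i,a_i)$ and $(m_{i+1},a_{i+1})$. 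The first member $\mathbb{E}_1$ is the initial divisorial mk2A of the statement: its Wahl singularities are $(m_1,a_1)=(n,a)$ and $(m_2,a_2)=(n^2,n^2-(na-1))$, and one computes $\delta=m_2a_1+m_1a_2-m_1m_2=n$, so $\delta m_1-m_2=n^2-n^2=0$, which confirms the divisorial type (HTU~\cite[Proposition~3.13]{HTU-2017}). Since $n>a\ge 1$ forces $n\ge 2$, we have $\delta=n>1$, hence the Mori sequence is infinite.

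The second ingredient is Proposition~\ref{proposition:divisorial-contraction-for-mk12A}: for each $i$ the divisorial contraction $\mathcal{X}_i\to\mathcal{Y}$ over $\mathbb{D}$ restricts over $t\ne 0$ to the blow-down of a single $(-1)$-curve between the smooth fibers, so $X_{i,t}\cong Y_t\sharp\overline{\mathbb{CP}^2}\cong B_{n,a}\sharp\overline{\mathbb{CP}^2}$ for all $i$; in particular all the general fibers $X_{i,t}$ are mutually diffeomorphic. On the other hand $\mathcal{X}_i\to\mathbb{D}$ is a $\mathbb{Q}$-Gorenstein smoothing of $X_i$ that simultaneously smooths its two Wahl singularities, and the Milnor fiber of the Wahl singularity defined by a pair $(m,a)$ is diffeomorphic to $B_{m,a}$, the two Milnor fibers lying in disjoint neighborhoods of the two singular points. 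Hence $X_{i,t}$ contains the disjoint pair $(B_{m_i,a_i},B_{m_{i+1},a_{i+1}})$, which therefore embeds in $B_{n,a}\sharp\overline{\mathbb{CP}^2}$; since $m_i=d(i)$ is strictly increasing (as $\delta\ge 2$), these pairs are pairwise distinct, so there are infinitely many of them.

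It remains to extract the named cases. For $i=1$ one has $B_{m_1,a_1}=B_{n,a}$ and $B_{m_2,a_2}=B_{n^2,n^2-(na-1)}$, which is diffeomorphic to $B_{n^2,na-1}$ because $(n^2,na-1)$ and $(n^2,n^2-(na-1))$ define the same Wahl singularity; this gives $B_{n,a}\sqcup B_{n^2,na-1}\hookrightarrow B_{n,a}\sharp\overline{\mathbb{CP}^2}$. For $n=2$, $a=1$ we have $\delta=2$; the Mori recursions give $d(1)=2$, $d(2)=4$, $d(i+1)=2d(i)-d(i-1)$, whence $d(i)=2i$, and $c(1)=1$, $c(2)=m_2-a_2=1$, $c(i+1)=2c(i)-c(i-1)$, whence $c(i)=1$; thus $\mathbb{E}_i$ has Wahl singularities $(2i,1)$ and $(2i+2,2i+1)$, and since $B_{2i+2,2i+1}\cong B_{2i+2,1}$ we obtain $B_{2i,1}\sqcup B_{2i+2,1}\hookrightarrow B_{2,1}\sharp\overline{\mathbb{CP}^2}$, and in particular $B_{2m,1}\hookrightarrow B_{2,1}\sharp\overline{\mathbb{CP}^2}$ for every $m\ge 1$.

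The step needing the most care is the identification of $Q\in Y$: one must verify that the cyclic quotient singularity to which the exceptional curve $C_1$ of $\mathbb{E}_1$ contracts is precisely $\frac{1}{n^2}(1,na-1)$ — equivalently, that the second Wahl singularity of $\mathbb{E}_1$ is $(n^2,n^2-(na-1))$ — so that indeed $Y_t\cong B_{n,a}$ and $X_{i,t}\cong B_{n,a}\sharp\overline{\mathbb{CP}^2}$. I expect this to follow from the explicit blow-up description underlying divisorial extremal neighborhoods (the blow-up of the resolution of a Wahl singularity as in Corollary~\ref{corollray:[a]->[T]-[a]}, equivalently the universal family of HTU~\cite{HTU-2017}): over $t\ne 0$ the divisorial contraction is the blow-up of $B_{n,a}$ at a point, while over $t=0$ it preserves the $(n,a)$-Wahl singularity and creates a second Wahl singularity of index $n^2$. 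Once this numerical check is in place, everything else is a formal repetition of the proof of Theorem~\ref{theorem:main-linear}.
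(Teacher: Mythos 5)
Your proposal is correct and takes essentially the same route as the paper: the paper realizes the initial divisorial mk2A with Wahl pairs $(n,a)$ and $(n^2,na-1)$ by applying the blow-up of Corollary~\ref{corollray:[a]->[T]-[a]} to the minimal resolution of the Wahl singularity defined by $(n,a)$, then invokes Proposition~\ref{proposition:divisorial-contraction-for-mk12A} and the infinitude of the Mori sequence for $\delta=n>1$, exactly as you do. The identification you flag as ``needing the most care'' is supplied by precisely the Corollary~\ref{corollray:[a]->[T]-[a]} construction you cite, so nothing essential is missing.
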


\begin{proof}
Let $(Q \in Y)$ be the Wahl singularity defined by the pair $(n,a)$. Let $V \to Y$ be the minimal resolution of $Q$ whose dual graph is given by
\begin{equation*}
\begin{tikzpicture}[scale=0.75]
\node[bullet] (20) at (2,0) [label=above:{$-a_1$}] {};

\node[empty] (250) at (2.5,0) [] {};
\node[empty] (30) at (3,0) [] {};

\node[bullet] (350) at (3.5,0) [label=above:{$-a_s$}] {};

\draw [-] (20)--(250);
\draw [dotted] (20)--(350);
\draw [-] (30)--(350);

\end{tikzpicture}
\end{equation*}
with $n^2/(na-1)=[a_1, \dotsc, a_s]$. By Corollary~\ref{corollray:[a]->[T]-[a]}, after a sequence of appropriate blowing ups of $V$, we have a regular neighborhood $\widetilde{V}$ of the linear chain
\begin{equation*}
\begin{tikzpicture}[scale=0.75]
\node[bullet] (20) at (2,0) [label=above:{$-b_1$}] {};

\node[empty] (250) at (2.5,0) [] {};
\node[empty] (30) at (3,0) [] {};

\node[bullet] (350) at (3.5,0) [label=above:{$-b_t$}] {};
\node[bullet] (450) at (4.5,0) [label=above:{$-1$},label=below:{C}] {};

\node[bullet] (550) at (5.5,0) [label=above:{$-a_1$}] {};

\node[empty] (60) at (6,0) [] {};
\node[empty] (650) at (6.5,0) [] {};

\node[bullet] (70) at (7,0) [label=above:{$-a_s$}] {};

\draw [-] (20)--(250);
\draw [dotted] (20)--(350);
\draw [-] (30)--(350);

\draw [-] (350)--(450);
\draw [-] (450)--(550);

\draw [-] (550)--(60);
\draw [dotted] (550)--(70);
\draw [-] (650)--(70);
\end{tikzpicture}
\end{equation*}
with $(n^2)^2/(n^2(na-1)-1)=[b_1, \dotsc, b_t]$. Then we have an mk2A $\mathbb{E}_1: (C \subset \mathcal{X}) \to (Q \in \mathcal{Y})$ where there are two Wahl singularities on $C$ defined by the pairs $(n^2, na-1)$ and $(n,a)$. Then $\delta=n > 1$ and one can easily check that this mk2A $\mathbb{E}_1$ is initial and of divisorial type.

Let $\mathbb{E}_i: (C_i \subset \mathcal{X}_i) \to (Q \in \mathcal{Y})$ be the mk2A in the Mori sequence for $\mathbb{E}_1$. By Proposition~\ref{proposition:divisorial-contraction-for-mk12A}, there is a blow-down $X_{i,t} \to Y_t$ ($t \neq 0$) of a $(-1)$-curve between two smooth fibers. Note that $Y_t=B_{n,a}$ and there are two disjoint rational homology balls in $X_{i,t}$.

If $n=2$ and $a=1$, then the Mori sequence is just
\begin{equation*}
\dotsb - [2m+2,\overline{2},2,\dotsc,2] - \dotsb - [6,\overline{2},2]-[4]
\end{equation*}
where $m \ge 1$. Therefore $B_{2m,1}$ $(m \ge 1)$ is smoothly embedded in $B_{2,1} \sharp \overline{\mathbb{CP}}^2$. Hence the assertion follows.
\end{proof}

The above theorem generalizes Owens~\cite[Theorem~4]{Owens-2017} which asserts that there is an embedding $B_{n^2, n-1} \hookrightarrow B_{n,1} \sharp \overline{\mathbb{CP}^2}$.

On the other hand, Owens~\cite[\S5]{Owens-2017} extends the notion of simple to embeddings of the form $B_{p,q} \hookrightarrow B_{p', q'} \sharp \overline{\mathbb{CP}^2}$ as follows: The embedding $B_{p,q} \hookrightarrow B_{p', q'} \sharp \overline{\mathbb{CP}^2}$ is said to be \emph{simple} if the resulting rational blow-up of $B_{p,q}$ has the same effect as rationally blowing up $B_{p',q'}$, together with a sequence of ordinary blow-ups.

\begin{proposition}\label{proposition:simple-embedding-sharp}
The embeddings $B_{m_i, a_i} \hookrightarrow B_{n,a} \sharp \overline{\mathbb{CP}^2}$ in the above Theorem~\ref{theorem:main-sharp} are simple for $i > 1$.
\end{proposition}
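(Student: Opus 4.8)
The plan is to run the relative MMP on a compactified family, exactly as in the proof of Theorem~\ref{theorem:main-derived-from-MMP}, only starting from the divisorial Mori sequence of Theorem~\ref{theorem:main-sharp} rather than from a blow-up of a negative chain. Fix $i>1$ and let $\mathbb{E}_i \colon (C_i \subset \mathcal{X}_i) \to (Q \in \mathcal{Y})$ be the $i$-th divisorial mk2A. By Proposition~\ref{proposition:divisorial-contraction-for-mk12A} the divisorial contraction induces a single blow-down $X_{i,t} \to Y_t = B_{n,a}$ between smooth fibers, so $X_{i,t} \cong B_{n,a} \sharp \overline{\mathbb{CP}^2}$; and in the $\mathbb{Q}$-Gorenstein smoothing $\mathcal{X}_i \to \mathbb{D}$ the two rational homology balls $B_{m_i,a_i}$ and $B_{m_{i+1},a_{i+1}}$ are exactly the Milnor fibers of the two Wahl singularities of $X_{i,0}$. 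Hence, by the standard description of a $\mathbb{Q}$-Gorenstein smoothing as a rational blow-down of the minimal resolution, the rational blow-up $\widetilde{X}_{i,t}$ of $B_{m_i,a_i}$ inside $X_{i,t}$ is diffeomorphic to the general fiber of a $\mathbb{Q}$-Gorenstein smoothing $\mathcal{X}'_i \to \mathbb{D}$ of the surface $X'_{i,0}$ obtained from $X_{i,0}$ by minimally resolving only the $(m_i,a_i)$-singularity; equivalently, $\widetilde{X}_{i,t}$ is the rational blow-down of the chain $C_{m_{i+1},a_{i+1}}$ performed inside the minimal resolution $\widetilde{X}_{i,0}$ of $X_{i,0}$. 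That such a partial-smoothing family exists with no local-to-global obstruction follows after compactifying as in the proof of Theorem~\ref{theorem:main-derived-from-MMP}, using the analogue of Lemma~\ref{lemma:local-to-global-vanishes}.

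The next step is to recognise that $\widetilde{X}_{i,0}$ is itself a blow-up of the minimal resolution $V \cong C_{n,a}$ of $Q \in Y$: the composite morphism $\widetilde{X}_{i,0} \to X_{i,0} \to Y$ is a (non-minimal) resolution of the cyclic quotient singularity $Q \in Y$, hence factors as $\widetilde{X}_{i,0} \to V \to Y$ with $\widetilde{X}_{i,0} \to V$ a sequence of ordinary blow-downs. Thus $\widetilde{X}_{i,t}$ is a rational blow-down of $C_{m_{i+1},a_{i+1}}$ carried out inside a blow-up of $V$, and the simplicity of $B_{m_i,a_i} \hookrightarrow B_{n,a} \sharp \overline{\mathbb{CP}^2}$ in the sense of Owens amounts to showing that this rational blow-down cancels exactly against part of that sequence of blow-ups, i.e. that $\widetilde{X}_{i,t} \cong C_{n,a} \sharp k\overline{\mathbb{CP}^2}$ for some $k$.

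To conclude this I would argue as in the final part of the proof of Theorem~\ref{theorem:main-derived-from-MMP}. Since $X'_{i,0}$ has only Wahl singularities, the smoothing $\mathcal{X}'_i \to \mathbb{D}$ blows down, via $X'_{i,0} \to Y$, to a $\mathbb{Q}$-Gorenstein deformation $\mathcal{Y}' \to \mathbb{D}$ of $Q \in Y$ whose general fiber has, up to the blow-downs above, the same Milnor number as $\widetilde{X}_{i,t}$ (KSB~\cite{Kollar-Shepherd-Barron-1988}); one then computes, using the Mori recursions for $\mathbb{E}_i$ together with the continued-fraction identities of Corollary~\ref{corollray:[a]->[T]-[a]}, that this Milnor number is that of the Artin component of the versal deformation of $Q \in Y$. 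This is where the hypothesis $i>1$ is used: the curve $C_i$ together with the exceptional chain of the resolved $(m_i,a_i)$-singularity accounts precisely for the minimal resolution $V \to Y$, whereas for $i=1$ the singularity $(m_1,a_1)=(n,a)$ plays the role of the contracted piece, so one instead lands on a non-Artin component and obtains the genuinely different (and excluded) embedding $B_{n,a} \hookrightarrow B_{n,a} \sharp \overline{\mathbb{CP}^2}$. Once the Artin-component count is in place, Wahl~\cite[(4.7.3)]{Wahl-1981} (exactly as invoked in Theorem~\ref{theorem:main-derived-from-MMP}) forces $\mathcal{Y}'$ into the Artin component, so its general fiber is diffeomorphic to $V \cong C_{n,a}$, and therefore $\widetilde{X}_{i,t}$ is a blow-up of $C_{n,a}$, which is the assertion. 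The main obstacle is precisely the Milnor-number bookkeeping in this last step — verifying that the blown-down deformation of $Q \in Y$ lands on the Artin component exactly when $i>1$ — which requires pushing through the explicit combinatorics of the divisorial Mori sequence.
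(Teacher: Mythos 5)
Your reduction in the first two paragraphs is sound: identifying $\widetilde{X}_{i,t}$ with the rational blow-down of $C_{m_{i+1},a_{i+1}}$ inside the minimal resolution $\widetilde{X}_{i,0}$, and observing that $\widetilde{X}_{i,0}\to Y$ factors through $V\cong C_{n,a}$ by ordinary blow-downs, is correct. But the proof stands or falls on the third paragraph, and there is a genuine gap there. First, the decisive step --- the Milnor-number computation showing that the blown-down deformation of $Q\in Y$ lands in the Artin component exactly when $i>1$ --- is only announced as a plan (you yourself flag it as ``the main obstacle''), so the argument is not complete. Second, and more seriously, the tool you want to import from the proof of Theorem~\ref{theorem:main-derived-from-MMP} does not apply in your setting: the KSB blow-down of a $\mathbb{Q}$-Gorenstein smoothing with preservation of Milnor numbers requires $K$ to be nef relative to the contraction (a P- or M-resolution), whereas your $X'_{i,0}\to Y$ still contains the curve $C_i$ of the \emph{divisorial} extremal neighborhood, on which $K$ is negative. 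Indeed the whole point of Proposition~\ref{proposition:divisorial-contraction-for-mk12A} is that general fibers of $\mathcal{X}_i$ and $\mathcal{Y}$ differ by a blow-down, so Milnor numbers are \emph{not} preserved; to repair this you would first have to run MMP on $\mathcal{X}'_i$ relative to $\mathcal{Y}$ and track whether the resulting contractions are flips or divisorial, which is not addressed and is where the actual content would lie.

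The paper avoids all of this with a much shorter argument: using the universal family of extremal neighborhoods (Urz{\'u}a~\cite[Proposition~2.12]{Urzua-2016}), the mk2A $\mathbb{E}_i$ is deformed, keeping the general fiber and the target $(Q\in\mathcal{Y})$, to a divisorial mk1A $\mathbb{E}_i'$ whose central fiber has a \emph{single} Wahl singularity of type $(m_i,a_i)$, and which realizes the same embedding $B_{m_i,a_i}\hookrightarrow B_{n,a}\sharp\overline{\mathbb{CP}^2}$. The rational blow-up of $B_{m_i,a_i}$ is then literally the minimal resolution of the central fiber of $\mathbb{E}_i'$, which, being a resolution of $Q\in Y_0$, is the minimal resolution $C_{n,a}$ followed by ordinary blow-ups --- no Milnor-number or Artin-component bookkeeping is needed. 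If you want to salvage your route, the one-Wahl-singularity reduction is the missing idea; without it, the two-singularity configuration forces you into the unproven (and, as set up, inapplicable) Artin-component step.
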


\begin{proof}
As in the proof of Theorem~\ref{theorem:main-sharp}, let $\mathbb{E}_i: (C_i \subset \mathcal{X}_i) \to (Q \in \mathcal{Y})$ ($i \ge 2$) be the divisorial mk2A with two Wahl singularities $P_1$ and $P_2$ defined by the pairs $(m_i, a_i)$ and $(m_{i+1}, a_{i+1})$ respectively over $(Q \in \mathcal{Y})$ where $Q \in Y_0$ is a Wahl singularity defined by the pair $(n,a)$. If we smooth out $P_1 \in C_i \subset X_{i,0}$ but keep $P_2$, then the mk2A $\mathbb{E}_i$ deforms to a divisorial mk1A $\mathbb{E}_i': (C_i' \subset \mathcal{X}_i') \to (Q \in \mathcal{Y})$ with one Wahl singularity defined by the pair $(m_i, a_i)$. Then the two embeddings $B_{m_i, a_i} \hookrightarrow B_{n,a} \sharp \overline{\mathbb{CP}^2}$ via the mk2A $\mathbb{E}_i$ and the mk1A $\mathbb{E}_i'$ are equivalent to each other.

Let $B_{m_i, a_i} \hookrightarrow B_{n,a} \sharp \overline{\mathbb{CP}^2}$ be the embedding obtained via the mk1A $\mathbb{E}_i'$. The rational blow-up along $B_{m_i, a_i}$ is equivalent to taking the minimal resolution of $P_i \in X_{i,0}'$ on the central fiber $X_{i,0}'$ of $\mathbb{E}_i'$, which is equivalent to taking the minimal resolution $Q \in Y_0$ followed by a sequence of ordinary blow-ups. So the assertion follows.
\end{proof}

%------------------------------------------------------------------------------
\section{Rational homology balls in Milnor fibers}
\label{section:other-embedding}

We show that there are infinitely many rational homology balls smoothly embedded in the Milnor fibers of certain cyclic quotient surface singularities and the embeddings are non-simple. Thus there is also little obstruction to embedding rational homology balls in smooth $4$-manifolds in a `non-simple' way.

\begin{theorem}\label{theorem:main-2-handlebody}
Let $(Q \in Y)$ be a cyclic quotient surface singularity which admits an extremal P-resolution $f^+ \colon X^+ \to Y$ with one Wahl singularity defined by the pair $(m', a')$. Then there are infinitely many pairs of disjoint rational homology balls smoothly embedded in the Milnor fiber $M$ associated to the P-resolution $X^+ \to Y$ and all of these embeddings are non-simple.
\end{theorem}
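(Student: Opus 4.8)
The plan is to mimic the strategy used for Theorem~\ref{theorem:main-sharp}, but now built on a \emph{flipping} initial mk2A rather than a divisorial one, so that the antiflips produce the Milnor fiber $M$ of the extremal $P$-resolution $X^+ \to Y$ instead of a blown-up rational homology ball. Fix the extremal $P$-resolution $f^+\colon X^+\to Y$ with its single Wahl singularity $(m',a')$ on the curve $C^+=(f^+)^{-1}(Q)$; write the P-resolution data as $[f_{s_2},\dots,f_1]-c-[e_1,\dots,e_{s_1}]$ in the notation of the preliminaries, where here one side is trivial (i.e.\ $m_1'=a_1'=1$) since $X^+$ has a single Wahl singularity. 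The Milnor fiber $M$ is by definition the general fiber of the $\mathbb Q$-Gorenstein smoothing $\mathcal{X}^+\to\mathbb D$ of $X^+$ that smooths this Wahl singularity; equivalently, $M$ is obtained from the minimal resolution of $X^+$ by rationally blowing down the chain over $(m',a')$.

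First I would invoke HTU~\cite{HTU-2017} (as summarized in Proposition~\ref{proposition:flip-for-Mori-sequence} and the discussion of Mori sequences): for the fixed extremal $P$-resolution $X^+$ there is an initial flipping mk2A $\mathbb{E}_1\colon (C_1\subset\mathcal{X}_1)\to(Q\in\mathcal{Y})$ whose flip has central fiber $X^+$, and it comes with an invariant $\delta=c m_1' m_2'-m_1'a_2'-m_2'a_1'>0$. Then I would form the Mori sequence $\mathbb{E}_i\colon (C_i\subset\mathcal{X}_i)\to(Q\in\mathcal{Y})$ of flipping mk2A's, each with two Wahl singularities defined by $(m_i,a_i)$ and $(m_{i+1},a_{i+1})$, all sharing the same flip $X^+$ by Proposition~\ref{proposition:flip-for-Mori-sequence}. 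Running the flip on each $\mathbb{E}_i$ is a birational modification over $\mathbb D$ that changes only the central fiber, so each general fiber $X_{i,t}$ ($t\neq 0$) is diffeomorphic to the general fiber of $\mathcal{X}^+\to\mathbb D$, namely $M$. Meanwhile $X_{i,0}$ carries two disjoint Wahl singularities $(m_i,a_i)$ and $(m_{i+1},a_{i+1})$, and deforming each of them $\mathbb Q$-Gorenstein in its local moduli produces two disjoint rational homology balls $B_{m_i,a_i}$ and $B_{m_{i+1},a_{i+1}}$ inside $X_{i,t}\cong M$. This gives the asserted pairs. Infinitude follows because $\delta>1$ for a flipping mk2A (the case $\delta=1$ is genuinely different, and as noted for Proposition~\ref{proposition:delta=1} forces $c=2$, $a=m-1$; but in fact the relevant Mori sequences are still infinite, or one simply checks directly that infinitely many distinct pairs occur), so the Mori sequence has infinitely many distinct entries $\mathbb{E}_i$; the pairs $(m_i,a_i)$ are strictly increasing in $m_i$ (indeed $m_{i+1}=d(i+1)>d(i)$ via the Mori recursion $d(i+1)=\delta d(i)-d(i-1)$), hence pairwise distinct.

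The remaining point — and I expect this to be the main obstacle — is to show that \emph{all} of these embeddings are \emph{non-simple}. The idea is: if $B_{m_i,a_i}\hookrightarrow M$ were simple, then the rational blow-up $(M\setminus B_{m_i,a_i})\cup C_{m_i,a_i}$ would be $M\sharp k\,\overline{\mathbb{CP}^2}$. One compares invariants of the rational blow-up against what the MMP picture forces. Concretely, $M$ is the Milnor fiber of the P-resolution $X^+\to Y$, which itself is \emph{not} the Artin component of the versal deformation of $(Q\in Y)$ (an extremal $P$-resolution with a genuine Wahl singularity has strictly smaller Milnor number than the M-resolution/Artin component, by Wahl~\cite[(4.7.3)]{Wahl-1981}); a simple embedding, by the argument of Theorem~\ref{theorem:main-derived-from-MMP}, would exhibit the rational blow-down as coming from a \emph{trivial} family plus antiflips whose central fiber is the minimal resolution of a cyclic quotient singularity — forcing the relevant smoothing to land in the Artin component, contradicting that $M$ has the smaller (non-Artin) Milnor number. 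More precisely: simplicity would mean the rationally blown-up $4$-manifold is a blow-up of $M$; but rationally blowing down $C_{m_i,a_i}$ from a blow-up of $M$ should recover $M$ only if the corresponding local deformation is in the Artin component, whereas the construction places it in the P-resolution component $X^+$, which is a different component as soon as $X^+$ has a nontrivial Wahl singularity. Making this last comparison airtight — tracking $b_2^-$, the intersection form, and the Milnor-number bookkeeping through the rational blow-up/blow-down to rule out simplicity uniformly in $i$ — is the delicate step; I would carry it out by the same Milnor-number-versus-component dimension argument (via Wahl~\cite[(4.7.3)]{Wahl-1981} and KSB~\cite{Kollar-Shepherd-Barron-1988}) that drives Theorem~\ref{theorem:main-derived-from-MMP}, now used in the contrapositive direction.
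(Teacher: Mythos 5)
Your construction of the pairs is essentially the paper's: the paper takes the explicit initial flipping mk2A with data $(m_1,a_1)=(1,1)$ and $(m_2,a_2)=(cm'-a',m'-a')$ (where $-c$ is the self-intersection of the strict transform of the exceptional curve of $f^+$ in the minimal resolution of $X^+$), checks it is initial and flipping with flip $X^+$, and reads off the disjoint pairs $(B_{m_i,a_i},B_{m_{i+1},a_{i+1}})$ from the general fibers of the Mori sequence exactly as you do. One caveat on infinitude: your assertion that $\delta>1$ holds for a flipping mk2A is false --- Proposition~\ref{proposition:delta=1} and Example~\ref{example:mori-sequence-flipping-delta=1} exhibit flipping families with $\delta=1$, for which the Mori sequence is \emph{finite} --- so the hedge you add there does not actually repair the count (the paper's own proof also silently assumes the sequence is infinite).

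The genuine gap is in the non-simplicity argument, which you yourself flag as the delicate step and leave open. The route you sketch --- running the argument of Theorem~\ref{theorem:main-derived-from-MMP} in the contrapositive --- does not apply as stated: that theorem is proved only for \emph{plainly} simple embeddings into a neighborhood of a negative linear chain, whereas here one must exclude mere simplicity of an embedding into a Milnor fiber $M$. The paper's argument is more direct and avoids this. It first deforms the mk2A $\mathbb{E}_i$ to an mk1A $\mathbb{E}_i'$ realizing the same embedding $B\hookrightarrow M$ (as in Proposition~\ref{proposition:simple-embedding-sharp}); then the rational blow-up $Z$ of $M$ along $B$ is, \emph{by construction}, diffeomorphic to the minimal resolution of the central fiber $X_0'$ of $\mathbb{E}_i'$, which is an iterated ordinary blow-up of the minimal resolution $\widetilde{Y}$ of $(Q\in Y)$. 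If the embedding were simple, $Z$ would also be an iterated ordinary blow-up of $M$; blowing down forces $M\cong\widetilde{Y}$, so $M$ would be the Milnor fiber of the Artin component, whose Milnor number is strictly maximal among components by Wahl's dimension formula --- contradicting that $X^+$ carries a nontrivial Wahl singularity. Your Milnor-number intuition is the right one, but the missing step is precisely the identification of the rational blow-up $Z$ with a blow-up of $\widetilde{Y}$ supplied by the mk1A; without it, the comparison of deformation components never gets off the ground.
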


\begin{proof}
Let $(-c)$ be the self-intersection number of the strict transform of the exceptional divisor of $f^+$ in the minimal resolution of $X^+$. We define an mk2A $\mathbb{E}_1: (C_1 \subset \mathcal{X}_1) \to (Q \in \mathcal{Y})$ by the numerical data $(m_1, a_1)=(1,1)$ and $(m_2, a_2)=(cm'-a', m'-a')$. Then $\delta=cm'-m'-a'<0$ (for $c \ge 2$ and $m'>a'$) and $\delta m_1 - m_2 = -m' < 0$. Therefore $\mathbb{E}_1$ is initial and flipping such that, on its flip $(C_1 \subset \mathcal{X}_1) \dashrightarrow (C^+ \subset \mathcal{X}^+)$, the extremal $P$-resolution $X^+$ is the central fiber of $\mathcal{X}^+ \to \mathbb{D}$.

Notice that there are infinitely many entries $\mathbb{E}_i: (C_i \subset \mathcal{X}_i) \to (Q \in \mathcal{Y})$ ($i \ge 1$) of mk2A's in the Mori sequence starting $\mathbb{E}_1$ and that $M \cong X_t^+ \cong X_{i,t}$ for $0 \neq t \in \mathbb{D}$. Hence there are infinitely many pairs of disjoint rational homology balls induced from $\mathbb{E}_i$ for $i \ge 1$ smoothly embedded in $M$.

Suppose that an embedding $B \hookrightarrow M$ is induced from an mk2A $\mathbb{E}: (C \subset \mathcal{X}) \to (Q \in \mathcal{Y})$ in the Mori sequence starting from $\mathbb{E}_1$. As in the proof of Proposition~\ref{proposition:simple-embedding-sharp}, the embedding $B \hookrightarrow M$ is indeed induced from an mk1A $\mathbb{E}': (C' \subset \mathcal{X}') \to (Q \in \mathcal{Y})$ obtained by deforming the mk2A $\mathbb{E}$ by Urz{\'u}a~\cite[Proposition 2.12]{Urzua-2016}.

Let $Z$ be the rationally blown-up of the $4$-manifold from $M$ along $B$. Then, by construction, $Z$ is diffeomorphic to the minimal resolution of the central fiber $X_0'$ of $\mathcal{X}' \to \mathbb{D}$ (where $K_{X'_0}$ is negative), which can be obtained from the minimal resolution of $Y$ by a sequence of ordinary blow-ups. Therefore $M$ must be diffeomorphic to the minimal resolution of $Y$, but then $M$ is the Milnor fiber of the Artin component, and so the P-resolution is smooth, a contradiction.
\end{proof}

%------------------------------------------------------------------------------


\begin{thebibliography}{99}
\bibitem{Evans-Smith-2018} Evans, Jonathan David; Smith, Ivan; \textit{Markov numbers and Lagrangian cell complexes in the complex projective plane}. Geom. Topol. \textbf{22} (2018), no. 2, 1143--1180.

\bibitem{Evans-Urzua-2018} Evans, Jonathan David; Urz{\'u}a, Giancarlo; \textit{Antiflips, mutations, and unbounded symplectic embeddings of rational homology balls}. arXiv:1807.06073.

\bibitem{Hacking-Prokhorov-2010} Hacking, Paul; Prokhorov, Yuri; \textit{Smoothable del Pezzo surfaces with quotient singularities}. Compos. Math. \textbf{146} (2010), no. 1, 169--192.

\bibitem{HTU-2017} Hacking, Paul; Tevelev, Jenia; Urz{\'u}a, Giancarlo; \textit{Flipping surfaces}. J. Algebraic Geom. \textbf{26} (2017), no. 2, 279--345.

\bibitem{Khodorovskiy-2012} Khodorovskiy, Tatyana; \textit{Symplectic rational blow-up and embeddings of rational homology balls}. Doctoral dissertation (2012), Harvard University.

\bibitem{Khodorovskiy-2014} Khodorovskiy, Tatyana; \textit{Smooth embeddings of rational homology balls}. Topol. Appl. \textbf{161} (2014), 386--396.

\bibitem{Kollar-Shepherd-Barron-1988} Koll{\'a}r, Janos; Shepherd-Barron, Nicholas I.; \textit{Threefolds and deformations of surface singularities}. Invent. Math. \textbf{91} (1988), no. 2, 299--338.

\bibitem{Owens-2017} Owens, Brendan; \textit{Equivariant embeddings of rational homology balls}. Q. J. Math. \textbf{69} (2018), no. 3, 1101-–1121.

\bibitem{Owens-2019} Owens, Brendan; \textit{Corrigendum to: Equivariant embeddings of rational homology balls}. arxiv:1906.05913.

\bibitem{PPS-2016} Park, Heesang; Park, Jongil; Shin, Dongsoo; \textit{Smoothly embedded rational homology balls}. J. Korean Math. Soc. \textbf{53} (2016), no. 6, 1293--1308.

\bibitem{PPSU-2018} Park, Heesang; Park, Jongil; Shin, Dongsoo; Urz{\'u}a, Giancarlo; \textit{Milnor fibers and symplectic fillings of quotient surface singularities}. Adv. Math. \textbf{329} (2018), 1156--1230.

\bibitem{PS-2017} Park, Heesang; Shin, Dongsoo; \textit{Rational homology balls in 2-handlebodies}. Bull. Korean Math. Soc. \textbf{54} (2017), no. 6, 1927--1933.

\bibitem{Urzua-2016} Urz{\'u}a, Giancarlo, \textit{Identifying neighbors of stable surfaces}. Ann. Sc. Norm. Super. Pisa Cl. Sci. (5) \textbf{16} (2016), no. 4, 1093--1122.

\bibitem{Urzua-2016a} Urz{\'u}a, Giancarlo, \textit{$\mathbb Q$-Gorenstein smoothings of surfaces and degenerations of curves}. Rend. Semin. Mat. Univ. Padova \textbf{136} (2016), 111–136.

\bibitem{Wahl-1981} Wahl, Jonathan; \textit{Smoothing of normal surface singularities}. Topology \textbf{20} (1981), 219--246.
\end{thebibliography}
\end{document}